\newtheorem{theorem}{Theorem}
\newtheorem{corollary}{Corollary}
\newtheorem{proposition}{Proposition}
\newtheorem{remark}{Remark}
\theoremstyle{definition}
\numberwithin{equation}{section}
\numberwithin{equation}{section}
\DeclareMathOperator{\supp}{supp}
\DeclareMathOperator*{\esssup}{ess\,sup}
\begin{document}
\title{Isomorphic structure of Ces\`aro and Tandori spaces}
\thanks{{\rm *}The first author has been partially supported by the Ministry of Education and Science 
of the Russian Federation and the second author has been partially supported by the grant 04/43/DSPB/0086
from the Polish Ministry of Science and Higher Education.}
\author[Astashkin]{Sergey V. Astashkin}
\address[Sergey V. Astashkin]{Department of Mathematics and Mechanics\\
Samara State University, Acad. Pavlova 1, 443011 Samara, Russia\\ and
Samara State Aerospace University (SSAU), Moskovskoye shosse 34, 443086, Samara, Russia}
\email{\texttt{astash@samsu.ru}}
\author[Le\'snik]{Karol Le\'snik}
\address[Karol Le{\'s}nik]{Institute of Mathematics\\
of Electric Faculty Pozna\'n University of Technology, ul. Piotrowo 3a, 60-965 Pozna{\'n}, Poland}
\email{\texttt{klesnik@vp.pl}}
\author[Maligranda]{Lech Maligranda}
\address[Lech Maligranda]{Department of Engineering Sciences and Mathematics\\
Lule{\aa} University of Technology, SE-971 87 Lule{\aa}, Sweden}
\email{\texttt{lech.maligranda@ltu.se}}
\maketitle

\vspace{-7mm}

\begin{abstract}
We investigate the isomorphic structure of the Ces\`aro spaces and their duals, the Tandori spaces. The main result 
states that the Ces\`aro function space $Ces_{\infty}$ and its sequence counterpart $ces_{\infty}$ are isomorphic, 
which answers to the question posted in \cite{AM09}. This is rather surprising since  $Ces_{\infty}$ has no natural 
lattice predual similarly as the known Talagrand's example \cite{Ta81}. We prove that neither $ces_{\infty}$ is isomorphic 
to $l_{\infty}$ nor $Ces_{\infty}$ is isomorphic to the Tandori space $\widetilde{L_1}$ with the norm 
$\|f\|_{\widetilde{L_1}}= \|\widetilde{f}\|_{L_1},$ where $\widetilde{f}(t):=  \esssup_{s \geq t} |f(s)|.$  Our investigation involves also
an examination of the Schur and Dunford-Pettis properties of Ces\`aro and Tandori spaces. In particular, using Bourgain's 
results we show that a wide class of Ces{\`a}ro-Marcinkiewicz and Ces{\`a}ro-Lorentz spaces have the latter property. 
\end{abstract}

\footnotetext[1]{2010 \textit{Mathematics Subject Classification}:  46E30, 46B20, 46B42.}
\footnotetext[2]{\textit{Key words and phrases}: Ces\`aro and Tandori sequence spaces, Ces\`aro and Tandori function 
spaces, Ces\`aro operator, Banach ideal spaces, symmetric spaces, Schur property, Dunford-Pettis property, isomorphisms.}

%%%%%%%%%%%%%%%%%%%%%%%%%%%%% Section 1
\section{\protect \medskip Introduction and contents}

Most commonly the classical Ces\`aro spaces appear as optimal domains of the Ces\`aro (Hardy) operator or some its versions 
(see \cite{DS07}, \cite{NP10}, \cite{LM15b}). Moreover, they can coincide with the so-called down spaces introduced and 
investigated by Sinnamon (see \cite{KMS07},  \cite{MS06}, \cite{Si94}, \cite{Si01}, \cite{Si07}), but having their roots in the 
papers of Halperin and Lorentz. 
Comparing to the function case, there is much more rich literature devoted to Ces\`aro sequence spaces and their duals
(see the classical paper of Bennett \cite{Be96} and also \cite{CH01}, \cite{CMP00}, \cite{Ja74}, \cite{KK12}, \cite{MPS07}).
Development of this topic related to the weighted case including the so-called blocking technique one can find in the book \cite{GE98}. 

In this paper we investigate the isomorphic structure of abstract Ces\`aro spaces $CX$ and their duals, Tandori spaces 
$\widetilde{X}$ on three separable measure spaces ${\mathbb N}, [0, \infty)$ and $[0, 1]$. For a Banach ideal space 
$X$ of measurable functions on  $I=[0, \infty)$ or $I=[0, 1]$, $CX$ is defined as the space of all measurable functions $f$ 
on $I$ such that $C|f| \in X$, equipped with the norm $\| f \|_{CX} : = \| C|f| \|_X$, where $C$ denotes the Ces\`aro operator, i.e., 
$(Cf)(x): = \frac{1}{x} \int_0^x f(t) \, dt$ for $x\in I$. In the case of a Banach ideal sequence space $X$, in the definition of 
the Ces\`aro space it is used the corresponding discrete Ces\`aro operator $(C_d x)_n: = \frac{1}{n} \sum_{k=1}^n x_k,$ 
$n \in \mathbb N$.

Study of abstract Ces\`aro function spaces, under this name, started in the paper \cite{LM15a}, where a description of their 
K\"othe duals by the so-called Tandori spaces was found. It is worth to note here that the obtained results substantially 
differ in the cases $I = [0, \infty)$ and $I = [0, 1]$. Continuing the same direction of research, in \cite{LM16}, the authors 
have examined interpolation properties of these spaces. 

Investigation of the isomorphic structure of classical Ces\`aro function spaces $Ces_p: = CL_p$ was initiated in the paper 
\cite{AM09} (see also \cite{AM14}); at the same time, studying classical Ces\`aro sequence spaces $ces_p: = Cl_p$ started 
much earlier (see \cite{MPS07} and the references cited therein). Among other things, in \cite{AM09}, the existence of an 
isomorphism between the spaces $Ces_p[0, \infty)$ and $Ces_p[0, 1]$ for all $1 < p \leq \infty$ has been proved. On the 
other hand, $Ces_p(I)$ and $ces_p$ for any $1 < p < \infty$ are clearly not isomorphic because, in contrast to $ces_p$, 
the space $Ces_p(I)$ is not reflexive.

Therefore, the only remained question (already formulated in \cite{AM09} and \cite{AM14}) was whether $Ces_{\infty}$ is
isomorphic to $ces_{\infty}$ or not. Theorem \ref{mainthm}, one of the main results of the present paper, solves this problem 
in affirmative. It is instructive to compare this result with  the well-known Pe{\l}czy\'nski theorem on the existence of 
an isomorphism between the spaces $L_{\infty}$ and $l_{\infty}$ \cite{Pe58} and also with Leung's  result which showed, 
using Pe{\l}czy\'nski decomposition method, that the weak $L_p$--spaces: $L_{p, \infty}[0, 1]$, $L_{p, \infty}[0, \infty)$ and 
$l_{p, \infty}$ for every $1 < p < \infty$ are all isomorphic  \cite{Le93}. 

On the other hand, we prove that  $ces_{\infty}$ and  $Ces_{\infty}(I)$ are not isomorphic to $l_{\infty}$, and to the Tandori 
space $\widetilde{L_1}(I)$ with the norm $\|f\|_{\widetilde{L_1}}= \|\widetilde{f}\|_{L_1}$, where 
$\widetilde{f}(t):= \esssup_{s \in I, \, s \geq t} |f(s)|$, 
respectively. Moreover, if $X$ is a reflexive symmetric space on $[0, 1]$ and the Ces\`aro operator $C$ is bounded on $X$, 
then $CX$ is not isomorphic to any symmetric space on $[0, 1]$. The main tool in proving these results is coming from the fact 
that either of the Ces\`aro spaces $ces_{\infty}$ and $Ces_{\infty}(I)$ contains a complemented copy of $L_1[0, 1]$ (see 
Proposition \ref{Pro1n}) but the other ones do not. We make use also of a characterization due to Hagler-Stegall \cite{HS73} 
of dual Banach spaces containing complemented copies of $L_1[0,1]$ and Cembranos-Mendoza result \cite{CM08}, stated 
that the mixed-norm space $l_{\infty}(l_1)$ contains a complemented copy of $L_1[0, 1]$ while the space $l_1(l_{\infty})$ 
does not.
 
Along with the isomorphic structure of abstract Ces\`aro and Tandori spaces  we study in this paper also their Schur and 
Dunford-Pettis properties being isomorphic invariants. In particular, we are able to find a new rather natural Banach space 
non-isomorhic to $l_1$ with the Schur property, namely, the sequence Tandori space $\widetilde{l_1}$ with the norm 
$\|(a_k)\|_{\widetilde{l_1}}= \|(\widetilde{a_k})\|_{l_1},$ where $\widetilde{a_k}:=\sup_{i\ge k}|a_i|.$ Regarding the 
Dunford-Pettis property we note that, generally, it is not easy to find out whether a given space has this property. 
We apply here a deep Bourgain's result \cite{Bo81}, which shows that every $l_{\infty}$-sum of $L_1$-spaces has the 
Dunford-Pettis property. Basing on this, Bourgain deduced also that the spaces of vector-valued functions $L_1(\mu,C(K))$ 
and $C(K, L_1(\mu))$, where $\mu$ and $K$ are a $\sigma$-finite measure and any compact Hausdorff set, respectively, 
and their duals have the Dunford-Pettis property. Using these facts and a suitable description of Ces{\`a}ro and Tandori 
spaces, obtained in this paper, we prove that $Ces_{\infty}(I), \widetilde{L_1}(I)$ and, under some conditions on dilation 
indices of a function $\varphi,$ Ces{\`a}ro-Marcinkiewicz spaces $CM_{\varphi}[0, \infty)$,  their separable parts 
$C(M_{\varphi}^0)(I)$, Ces{\`a}ro-Lorentz spaces $C\Lambda_{\varphi}(I)$ as well as Tandori-Lorentz spaces 
$\widetilde{\Lambda_{\varphi}}(I)$ in both cases $I=[0,\infty)$ and $I=[0,1]$ all enjoy the Dunford-Pettis property. Recall 
that, in  \cite{KM00}, Kami\'nska and Masty{\l}o proved that $l_1, c_0$ and $l_{\infty}$ are the only symmetric sequence 
spaces  with the Dunford-Pettis property and there exist exactly six non-isomorphic symmetric spaces on $[0, \infty)$ enjoying 
the latter property: $L_1, L_{\infty}, L_1 \cap L_{\infty}, L_1 + L_{\infty}$ and the closures of $L_1 \cap L_{\infty}$ in $L_{\infty}$ 
and in  $L_1 + L_{\infty}$. 

The paper is organized as follows. Following this Introduction, Section 2 collects some necessary preliminaries, firstly, on Banach 
ideal and symmetric spaces and, secondly, on Ces{\`a}ro and Tandori spaces. Here, we recall also Theorem A related to the duality 
from \cite{LM15a} and prove Proposition 1 on the existence of a complemented copy of $L_1[0, 1]$ in an arbitrary Ces{\`a}ro space 
$CX$ provided the Ces{\`a}ro operator $C$ is bounded in $X$. These results are frequently used throughout the paper. 

Section 3 contains results related to studying the Schur and Dunford-Pettis properties of Tandori and Ces{\`a}ro sequence spaces. 
We proved that $\widetilde{l_1}$ has the Schur property (Theorem 1) and $ces_{\infty}$ contains a complemented copy of 
$L_1[0, 1]$ (Theorem 3). Moreover, we investigate the conditions under which Ces{\`a}ro-Marcinkiewicz and Ces{\`a}ro-Lorentz 
sequence spaces and also their duals have the Dunford-Pettis property (see Theorems \ref{Thm4a} and \ref{Thm5a}). Finally, we 
show that the spaces $CX$ and $\widetilde{X}$ fail to have the Dunford-Pettis property whenever a symmetric sequence space 
$X$ is reflexive and the discrete Ces{\`a}ro operator is bounded in $X$ or in $X'$, respectively (Theorem \ref{Thm4}).

Section 4 deals with the Dunford-Pettis property of Ces{\`a}ro and Tandori function spaces. It is proved that, under the assumption 
$q_{\varphi} < 1$, both Tandori-Lorentz space $\widetilde{\Lambda_{\varphi}}[0, \infty)$ and Ces{\`a}ro-Marcinkiewicz space 
$CM_{\varphi}[0, \infty)$ have the Dunford-Pettis property (Theorem \ref{Thm5}). In particular, two non-isomorphic spaces 
$Ces_{\infty}(I)$ and $\widetilde{L_1}(I)$ have the latter property (see Theorem \ref{Thm6}). Similar result holds also for the separable 
parts of the Ces{\`a}ro-Marcinkiewicz spaces $CM_{\varphi}[0, \infty)$ and $CM_{\varphi}[0, 1]$ provided 
$\lim_{t \rightarrow 0^+} \varphi(t) = 0$ and $q_{\varphi} < 1$ or $q_{\varphi}^0 < 1$, respectively (Theorem \ref{Thm7n} and 
Theorem \ref{Thm9new}). Moreover, if $X$ is a reflexive symmetric function space satisfying some conditions, then $CX$ and 
$\widetilde{X}$ fail to have the Dunford-Pettis property (Theorem \ref{Thm7new}).

The last Section 5 contains one of the main results of the paper, showing that the spaces $Ces_{\infty}$ and $ces_{\infty}$ are 
isomorphic (Theorem \ref{mainthm}). This gives a positive answer to the question posed in \cite[Problem 1]{AM09} and repeated 
in \cite[Problem 4]{AM14}. An interesting consequence of this result is the fact that the space $Ces_{\infty}$ is isomorphic to a dual 
space although $[(Ces_{\infty})^{\prime}]^0 = (\widetilde{L_1})^0 = \{0\}$ (Corollary \ref{Cor7n}) and so there is no natural candidate 
for its predual (for $ces_{\infty}$, however, the predual is $\widetilde{l_1}$ because $(\widetilde{l_1})^* = 
(\widetilde{l_1})^{\prime} = ces_{\infty})$.
We explain that this phenomenon has its counterpart in the general theory of Banach lattices, discussing its relation to 
Lotz's result \cite{Lo75} and to Talagrand's example of a separable  Banach lattice being a dual space (and hence having 
the Radon-Nikodym property) such that for each $x^* \in E^*$, the interval $[0, |x^*|]$ is not weakly compact \cite{Ta81}
(see Proposition \ref{Cor12}). Finally, we prove that $Ces_{\infty}(I)$ is isomorphic to the space 
$(\bigoplus_{k=1}^{\infty} {\mathcal M}[0,1])_{l_\infty}$, where ${\mathcal M}[0,1]$ is the space of regular Borel measures on $[0,1]$ 
of finite variation (Theorem \ref{Thm12}).

 %%%%%%%%%%%%%%%%%%%%%%%%%%%%% Section 2
\section{\protect Definitions and basic facts}

%%%%%%%%%%%%%%%%%%%%%%%%%%%Subsection 2A
\subsection{\protect Banach ideal spaces and symmetric spaces}
By $L^0 = L^0(I)$ we denote the set of all equivalence 
classes of real-valued Lebesgue measurable functions defined on $I = [0, 1]$ or $I = [0, \infty)$. A {\it Banach ideal space} 
$X = (X, \|\cdot\|)$ on $I$ is understood as a Banach space contained in $L^0(I)$, which satisfies the so-called ideal 
property: if $f, g \in L^0(I), |f| \leq |g|$ almost everywhere (a.e.) with respect to the Lebesgue measure
on $I$ and $g \in X$, then $ f\in X$ and $\|f\| \leq \|g\|$. Sometimes we write 
$\|\cdot\|_{X}$ to be sure which norm in the space is taken. If it is not stated otherwise we suppose that a Banach ideal 
space $X$ contains a function $f\in X$ with $f(x) > 0$ a.e. on $I$ (such a function is called the {\it weak unit} in $X$), 
which implies that ${\rm supp}X = I$. Similarly we define a Banach ideal sequence space (i.e., on $I = \mathbb N$ with the counting
measure).

Since an inclusion of two Banach ideal spaces is continuous, we prefer to write in this case $X\hookrightarrow Y$ rather that $X\subset Y$.
Moreover, the symbol $X\overset{A}{\hookrightarrow }Y$ indicates that $X\hookrightarrow Y$ with the norm of the inclusion operator 
not bigger than $A$, i.e., $\| f\|_{Y} \leq A \|f\|_{X}$ for all $f\in X$. Also, $X = Y$ (resp. $X \equiv Y$) means that the spaces $X$ and $Y$ 
have the same elements with equivalent (resp. equal) norms. By $X \simeq Y$ we denote the fact that the Banach spaces $X$ and $Y$ 
are isomorphic.

For a Banach ideal space $X = (X, \|\cdot\|)$ on $I$ the {\it K{\"o}the dual space} (or {\it associated space}) $X^{\prime}$ is the space 
of all $f \in L^0(I)$ such that the {\it associated norm}
\begin{equation} \label{dual}
\|f\|^{\prime} = \sup_{g \in X, \, \|g\|_{X} \leq 1} \int_{I} |f(x) g(x) | \, dx
\end{equation}
is finite. The K{\"o}the dual $X^{\prime} = (X^{\prime}, \|\cdot \|^{\prime})$ is a Banach ideal space. Moreover, 
$X \overset{1}{\hookrightarrow }X^{\prime \prime}$ and we have equality $X = X^{\prime \prime}$ with $\|f\| = \|f\|^{\prime \prime}$ 
if and only if the norm in $X$ has the {\it Fatou property}, that is, if the conditions $0 \leq f_{n} \nearrow f$ a.e. on $I$
and $\sup_{n \in {\bf N}} \|f_{n}\| < \infty$ imply that $f \in X$ and $\|f_{n}\| \nearrow \|f\|$.

For a Banach ideal space $X = (X, \|\cdot\|)$ on $I$ with the K\"othe dual $X^{\prime}$ the following {\it generalized H\"older-Rogers 
inequality} holds: if $f \in X$ and $g \in X^{\prime}$, then $f g$ is integrable and
\begin{equation} \label{HRI}
\int_I |f(x) g(x)|\, dx \leq \| f\|_X \, \| g\|_{X^{\prime}}
\end{equation}

A function $f$ in a Banach ideal space $X$ on $I$ is said to have an {\it order continuous norm} in $X$ if for any
decreasing sequence of Lebesgue measurable sets $A_{n} \subset I $ with $m(\bigcap_{n=1}^{\infty} A_n) = 0$, where 
$m$ is the Lebesgue measure, we have $\|f \chi_{A_{n}} \| \rightarrow 0$ as $n \rightarrow \infty$. The set of all functions 
in $X$ with order continuous norm is denoted by $X^0$. If $X^0 = X$, then the space $X$ is said to be {\it order continuous}.
For an order continuous Banach ideal space $X$ the K{\"o}the dual $X^{\prime}$ and the dual space $X^{*}$ coincide.
Moreover, a Banach ideal space $X$ with the Fatou property is reflexive if and only if both $X$ and its K\"othe dual 
$X^{\prime}$ are order continuous.

For a weight $w(x)$, i.e., a measurable function on $I$ with $0 < w(x) < \infty$ a.e. and for a Banach ideal space $X$ 
on $I$, the {\it weighted Banach ideal space} $X(w)$ is defined as the set $X(w)=\{f\in L^0: fw \in X\}$ with the norm 
$\|f\|_{X(w)}=\| f w \|_{X}$. Of course, $X(w)$ is also a Banach ideal space and 
$[X(w)]^{\prime} \equiv X^{\prime}(1/w)$.

A Banach ideal space $X = (X,\| \cdot \|_{X})$ is said to be a {\it symmetric} (or {\it rearrangement invariant}) space on $I$ 
if from the conditions $f\in X$, $g \in L^{0}(I)$ and the equality  $d_{f}(\lambda)=d_{g}(\lambda)$ for all $\lambda>0$, where
$$d_{f}(\lambda) := m(\{x \in I: |f(x)|>\lambda \}),\lambda \geq 0,$$ 
it follows that $g\in X$ and $\| f\|_{X} = \| g\|_{X}$. In particular, $\| f\|_{X}=\| f^{\ast }\|_{X}$, where 
$f^{\ast }(t) = \mathrm{\inf } \{\lambda >0\colon \ d_{f}(\lambda ) < t\},\ t\geq 0$. 

For a symmetric function space $X$ on $I$ its fundamental function $\varphi_X$ is defined as follows
\begin{equation*}
\varphi_X(t)=\|\chi_{[0,t]}\|_X, ~t>0,
\end{equation*}
where by $\chi_E$ throughout will be denoted the characteristic function of a set $E.$

Let us recall some classical examples of symmetric spaces. Each increasing concave function $\varphi$ on $I, \varphi (0) = 0,$ 
generates the {\it Lorentz space} $\Lambda_{\varphi}$ (resp. {\it Marcinkiewicz space} $M_{\varphi}$) on $I$ endowed with 
the norm
\begin{equation*}
\|f\|_{\Lambda_{\varphi}} = \int_I f^*(s) \,d\varphi(s).
\end{equation*}
(resp.
\begin{equation} \label{Marcinkiewicz}
\|f\|_{M_{\varphi}}=\sup_{t\in I} \frac{\varphi(t)}{t} \int_0^t f^*(s) \,ds).
\end{equation}
In the case $\varphi(t) = t^{1/p}, 1 < p < \infty$, the Marcinkiewicz space is also called the weak-$L_p$ space (shortly denoted by 
$L_{p, \infty}$) and the norm (\ref{Marcinkiewicz}) is equivalent to the quasi-norm $\| f \|_{L_{p, \infty}} = \sup_{t\in I} t^{1/p} f^*(t)$.
In general, a space $M_{\varphi}$ is not separable (for example, when $\lim_{t \rightarrow 0^+} \frac{t}{\varphi(t)} = 
\lim_{t \rightarrow \infty} \frac{\varphi(t)}{t} = 0$), but the spaces
\begin{equation*}
\{f \in M_{\varphi}: \lim_{t \rightarrow 0^+, \infty} \frac{\varphi(t)}{t} \int_0^tf^*(s) \,ds = 0 \} ~~ {\rm in ~ the ~ case} ~ I = [0, \infty)
\end{equation*}
and
\begin{equation*}
\{f \in M_{\varphi}: \lim_{t \rightarrow 0^+} \frac{\varphi(t)}{t} \int_0^tf^*(s) \,ds = 0\} ~~ {\rm in ~ the ~ case} ~ I = [0,1]
\end{equation*}
with the norm (\ref{Marcinkiewicz}) are the separable symmetric spaces which, in fact, coincide with the space 
$M_{\varphi}^0$ on $I=[0,\infty)$ or $I=[0,1]$, respectively, provided $\lim_{t \rightarrow 0^+} \varphi(t) = 0$ (cf. \cite[pp. 115-116]{KPS82}). 

Let $\Phi$ be an increasing convex function on $[0, \infty)$ such that $\Phi (0) = 0$. Denote by $L_{\Phi}$ the {\it Orlicz space}  
on $I$ (see e.g. \cite{KR61}, \cite{Ma89}) endowed with the Luxemburg-Nakano norm
$$
\| f \|_{L_{\Phi}} = \inf \{\lambda > 0: \int_I \Phi(|f(x)|/\lambda) \, dx \leq 1 \}.
$$

For a given symmetric space $X$ with the fundamental function $\varphi$ (every such a function is equivalent to a concave function) 
we have
\begin{equation*}
\Lambda_{\varphi} \overset{2}{\hookrightarrow } X \overset{1}{\hookrightarrow } M_{\varphi} ~ 
{\rm and} ~(M_{\varphi})^{\prime} = \Lambda_{\psi} ~ {\rm with}  ~ \psi(t) = \frac{t}{\varphi(t)}, t > 0.
\end{equation*}

Similarly one can define Banach ideal and symmetric sequence spaces and all the above notions. In particular, the fundamental 
function of a symmetric sequence space $X$ is the function $\varphi_X(n) = \| \sum_{k=1}^n e_k \|_X, n \in \mathbb N,$ where 
$\{e_n\}_{n=1}^\infty$ is the canonical basic sequence of $X$. Moreover, the {\it Lorentz sequence space} $\lambda_{\varphi}$ (resp. 
{\it Marcinkiewicz sequence space} $m_{\varphi}$) is defined as the space of all 
sequences $x = (x_n)_{n=1}^{\infty},$ for which the following norm is finite
\begin{equation*} 
\|x \|_{\lambda_{\varphi}}=\sum_{k=1}^\infty x_k^* (\varphi(k+1)-\varphi(k)) 
\end{equation*}
(resp.
\begin{equation} \label{marcinkiewicz}
\|x \|_{m_{\varphi}}=\sup_{n \in \mathbb N} \frac{\varphi(n)}{n}\sum_{k=1}^n x_k^*), 
\end{equation}
where $\varphi$ is an increasing concave function on $[0, \infty)$ and $(x_k^*)$ is the decreasing rearrangement of 
the sequence $(|x_k|)_{k=1}^{\infty}$. In the case $\varphi(n) = n^{1/p}, 1 < p < \infty$, the Marcinkiewicz space $m_{\varphi}$ is also 
called the weak--$l_p$ space (shortly denoted by $l_{p, \infty}$) and the norm (\ref{marcinkiewicz}) is equivalent to the quasi-norm 
$\| x \|_{l_{p, \infty}} = \sup_{k \in \mathbb N} k^{1/p} x_k^*$.

The {\it dilation operators} $\sigma_s$ ($s > 0$) defined on $L^0(I)$ by $\sigma_s f(x) = f(x/s)$ if $I = [0, \infty)$ and
$\sigma_s f(x) = f(x/s) \chi_{[0, \, \min(1, \, s)]}(x)$ if $I = [0, 1]$ are bounded in any symmetric space $X$ on $I$ and 
$\| \sigma_s \|_{X \rightarrow X} \leq \max (1, s)$ (see \cite[p. 148]{BS88} and \cite[pp. 96-98]{KPS82}). 
These operators are also bounded in some Banach ideal spaces which are not symmetric. For example, if $X = L_p(x^{\alpha})$, 
then $\| \sigma_s\|_{X \rightarrow X} = s^{1/p + \alpha}$ (see \cite{Ru80} for more examples). 
The {\it Boyd indices} of a symmetric space $X$ are defined by
\begin{equation*}
\alpha_X = \lim_{s \rightarrow 0^+} \frac{\ln \| \sigma_s\|_{X \rightarrow X} }{\ln s}, \beta_X = 
\lim_{s \rightarrow \infty} \frac{\ln \| \sigma_s\|_{X \rightarrow X} }{\ln s},  
\end{equation*}
and we have $0 \leq \alpha_X \leq \beta_X \leq 1$ (cf. \cite[pp. 96-98]{KPS82} and \cite[p. 139]{LT79}).

For every $m \in \mathbb N$ let $\sigma_m$ and $\sigma_{1/m}$ be the {\it dilation operators} defined in spaces of 
sequences $a = (a_n)$ by:
$$
\sigma_m a = \left( ( \sigma_m a)_n \right)_{n=1}^{\infty} = \big (a_{[\frac{m-1+n}{m}]} \big)_{n=1}^{\infty} 
= \big ( \overbrace {a_1, a_1, \ldots, a_1}^{m}, \overbrace {a_2, a_2, \ldots, a_2}^{m}, \ldots \big)
$$
\begin{eqnarray*}
\sigma_{1/m} a 
&=& 
\left( ( \sigma_{1/m} a)_n \right)_{n=1}^{\infty} = \Big (\frac{1}{m} \sum_{k=(n-1)m + 1}^{nm} a_k \Big)_{n=1}^{\infty} \\
&=& 
\big ( \frac{1}{m} \sum_{k=1}^m a_k, \frac{1}{m} \sum_{k=m+1}^{2m} a_k, \ldots, \frac{1}{m} \sum_{k=(n-1)m + 1}^{nm} a_k, \ldots \big)
\end{eqnarray*}
(cf. \cite[p. 131]{LT79} and \cite[p. 165]{KPS82}). 
They are bounded in any symmetric sequence space and also in some non-symmetric 
Banach ideal sequence spaces; for example, $\| \sigma_{m} \|_{l_p(n^{\alpha}) \rightarrow l_p(n^{\alpha})} \leq m^{1/p} \max(1, m^{\alpha})$ 
and $\| \sigma_{1/m} \|_{l_p(n^{\alpha}) \rightarrow l_p(n^{\alpha})} \leq m^{-1/p} \max(1, m^{-\alpha})$.

The {\it lower index $p_{\varphi}$ and upper index} $q_{\varphi}$ of an arbitrary positive function $\varphi$ on $[0, \infty)$ are defined as 
\begin{equation} \label{indices}
p_{\varphi} = \lim_{s \rightarrow 0^{+}} \frac{\ln \overline \varphi( s) }{\ln s}, ~ q_{\varphi} = \lim_{s \rightarrow \infty } \frac{\ln \overline \varphi(s) }{\ln s}, 
~ {\rm where} ~~ {\overline \varphi}( s) = \sup_{t > 0 } \frac{\varphi( st ) }{\varphi( t) }. 
\end{equation}
It is known (see, for example, \cite{KPS82} and \cite{Ma85, Ma89}) that for a concave function $\varphi $ on $[0, \infty)$ 
we have $0 \leq p_{\varphi} \leq q_{\varphi} \leq 1$. Moreover, the estimate 
\begin{equation} \label{estimate2.13}
\int_0^t \frac{1}{\varphi(s)} \, ds \leq C \frac{t}{\varphi(t)} ~ {\rm for ~all} ~~ t >0
\end{equation}
is equivalent to the condition $q_{\varphi} < 1$ (cf. \cite[Lemma 1.4]{KPS82}, \cite[Theorem 11.8]{Ma85} and \cite[Theorem 6.4(a)]{Ma89}). 

If an increasing concave function $\varphi$ is defined on $[0, 1]$ (resp. on $[1,\infty)$), 
then the corresponding indices $p_{\varphi}^0, q_{\varphi}^0$ (resp. $p_{\varphi}^\infty, q_{\varphi}^\infty$) are 
the numbers defined as limits in (\ref{indices}), where instead of ${\overline \varphi}$ we take the function 
${\overline \varphi^{0}}(s) = \sup_{0 < t \leq \min(1, 1/s)} \frac{\varphi( st ) }{\varphi( t)}$ 
(resp. ${\overline \varphi^{\infty}}(s) = \sup_{t \geq \max(1, 1/s)} \frac{\varphi( st ) }{\varphi( t)}$). Of course, 
$0 \leq p_{\varphi}^0 \leq q_{\varphi}^0 \leq 1$ (resp. $0 \leq p_{\varphi}^\infty \leq q_{\varphi}^\infty \leq 1$)  
and estimate (\ref{estimate2.13}) for all $0 < t \leq 1$ is equivalent to the condition $q_{\varphi}^0 < 1$. 

If $X_n,$ $n=1,2,\dots,$ are Banach spaces and $1 \leq p\leq \infty$, then
$(\bigoplus_{n=1}^{\infty} X_n)_{l_p}$ is the Banach space of all sequences $\{x_{n}\},$ $x_n\in X_n,$ $n=1,2,\dots,$ 
such that
$$
\| \{x_{n}\} \| := \Big(\sum_{n = 1}^{\infty} \|x_n\|^p\Big)^{1/p} < \infty,
$$
with natural modification in the case when $p$ is infinite.

For general properties of Banach ideal and symmetric spaces we refer to the books \cite{BS88}, \cite{KA77}, \cite{KPS82}, 
\cite{LT79} and \cite{Ma89}. 

%%%%%%%%%%%%%%%%%%%%%%%%%%%Subsection 2B
\subsection{\protect \medskip Ces\`aro and Tandori spaces}

The Ces\`aro and Copson operators $C$ and $C^*$ are defined, respectively, as
$$
Cf(x) = \frac{1}{x} \int_0^x f(t) \,dt, 0 < x \in I\;\;\mbox{and}\;\; C^*f(x) = \int_{I \cap [x, \infty)} \frac{f(t)}{t} \,dt,  x \in I.
$$
By $\widetilde{f}$ we will understand the decreasing majorant of a given function $ f$, i.e.,
$$
\widetilde{f}(x) = \esssup_{t \in I, \, t \geq x} |f(t)|.
$$
For a Banach ideal space $X$ on $I$ we define the {\it abstract Ces\`aro space} $CX = CX(I)$ as 
\begin{equation} \label{Cesaro}
CX = \{f\in L^0(I): C|f| \in X\} ~~ {\rm with ~the ~norm } ~~ \|f\|_{CX} = \| C|f| \|_{X},
\end{equation}
and the {\it abstract Tandori space} $\widetilde{X} = \widetilde{X} (I)$ as 
\begin{equation} \label{falka}
\widetilde{X} = \{f\in L^0(I): \widetilde{f}\in X\} ~~ {\rm with ~the ~norm } ~~ \|f\|_{\widetilde{X}}=\|\widetilde{f}\|_{X}.
\end{equation}
In particular, if $X = L_p, 1 < p \leq \infty$, we come to classical Ces\`aro spaces $Ces_p = CL_p$, which were investigated 
in \cite{AM08}, \cite{AM09}, \cite{AM13}, \cite{AM14}. Note, that the case $p = 1$ is not interesting here because it is easy to see that 
$Ces_1[0, \infty) = \{0\}$ and $Ces_1[0, 1] = L_1(\ln \frac{1}{t})$.
The space $Ces_{\infty}[0, 1]$ appeared already in 1948 and it is known as the Korenblyum-Krein-Levin space 
(see \cite{KKL48}, \cite{LZ66} and \cite{Wn99}).

It is clear that
\begin{equation} \label{embedding}
\widetilde{X} \overset{1}{\hookrightarrow } X, ~ {\rm and} ~ X \overset{A}{\hookrightarrow } CX ~{\rm provided} ~ 
C ~{\rm is ~ bounded ~ on} ~X ~ {\rm with} ~ A = \| C\|_{X \rightarrow X}.
\end{equation}
Moreover, if $X$ is a symmetric space on $I$, then for every $0 < a < b, b \in I$ we have
\begin{equation} \label{equality}
\| \chi_{[a, b]} \|_{\widetilde{X}} = \| \widetilde{\chi_{[a, b]}} \|_X = \| \chi_{[0, b]} \|_X = \varphi_X(b).
\end{equation}

In the sequence case the discrete Ces\`aro and Copson operators $C_d$ and $C_d^*$ are defined 
by 
$$
(C_d a)_n = \frac{1}{n} \sum_{k = 1}^n a_k\;\;\mbox{and}\;\; (C^*_d a)_n = \sum_{k = n}^{\infty} 
\frac{a_k}{k},\;\; n \in {\mathbb N},
$$  
and also the decreasing majorant $\widetilde{a} = (\widetilde{a_n})$ of a given sequence $a = (a_n)$ by
$$
\widetilde{a_n} = \sup_{k \in {\mathbb N}, \, k \geq n} |a_k|.
$$
Then the corresponding {\it abstract Ces\`aro sequence space} $CX$ and {\it abstract Tandori sequence space} $\widetilde{X}$ 
are defined similarly as in (\ref{Cesaro}) and (\ref{falka}). Again a lot is known about classical  Ces\`aro sequence spaces 
$ces_p: = Cl_p, 1 < p \leq \infty$ (cf. \cite{AM08}, \cite{AM13}, \cite{MPS07} and references given there).

Abstract Ces\`aro and Copson spaces were investigated in \cite{LM15a}, \cite{LM15b}, where the 
following results on their K\"othe duality were proved (cf. \cite[Theorems 3, 5 and 6]{LM15a}).
\vspace{1mm}

%%%%%%%%%%%%%%%%%%%%% Theorem A
{\bf Theorem A.} {\it (i)  If $X$ is a Banach ideal space on $I = [0,\infty)$ such that the Ces\`aro operator $C$ and the dilation operator 
$\sigma_\tau$ for some $\tau \in (0, 1)$ are bounded on $X$, then 
\begin{equation} \label{ThmAi}
(CX)^{\prime} = \widetilde{X^{\prime}}.
\end{equation}
(ii) If $X$ is a symmetric space on $[0, 1]$ with the Fatou property such that the operators $C, C^*: X \rightarrow X$ are 
bounded, then
$$
(CX)^{\prime} = \widetilde{ X^{\prime}(w_1)}, ~ {\rm where} ~~ w_1(x) = \frac{1}{1-x}, ~x \in [0, 1).
$$
(iii) If $X$ is a Banach ideal sequence space such that the dilation operator $\sigma_{3}$ is bounded on $X^{\prime}$, then 
\begin{equation}  \label{ThmAiii}
(CX)^{\prime} = \widetilde{X^{\prime}}.
\end{equation}
Moreover, in the extreme case when $X = L_{\infty}(I)$ the above duality results hold with the equality of norms.}
\vspace{2mm}

The corresponding results on the K\"othe duality of the classical spaces $Ces_p(I)$ for $1 < p < \infty$ were proved in \cite{AM09} (see also \cite{KK12})
showing a surprising difference of them in the cases $I = [0, \infty)$ and $I = [0, 1]$. Much earlier, the identifications 
$(Ces_{\infty}[0, 1])^{\prime} \equiv \widetilde{L_1}[0, 1]$ and $(Ces_{\infty}^0[0, 1])^{\ast} \equiv {\tilde L_1}[0, 1]$
were obtained by Luxemburg and Zaanen \cite{LZ66} and by Tandori \cite{Ta55}, respectively. A simple proof of the latter results 
both for $I = [0, 1]$ and $I = [0, \infty)$ was given in \cite{LM15a}. 
Moreover, according to Theorem 7 from the above paper, if $w$ is a weight on $I$ such that $W(x) = \int_0^x w(t)\;dt < \infty$ for any $x \in I$, then 
setting $v(x) = {x}/{W(x)}$ we obtain
\begin{equation} \label{2.11}
(Ces_{\infty}(v))^{\prime}:= [C(L_{\infty}(v))]^{\prime} \equiv  \widetilde{L_1(w)}.
\end{equation} 
A close identification for weighted Ces\`aro sequence spaces follows from a result by Alexiewicz \cite{Al57}, who showed in 1957 
that for a weight $w = (w_n)$ with $w_n \geq 0, w_1 > 0$, we have 
\begin{equation} \label{Alex1}
\left(\widetilde{l_1(w)} \right)^{\prime} \equiv ces_{\infty}(v): = C(l_{\infty}(v)), ~
 {\rm where} ~ v(n) = \frac{n}{\sum_{k=1}^n w_k}.
\end{equation}
In particular, using the Fatou property of the space $\widetilde{l_1(w)},$ from \eqref{Alex1} we infer
\begin{equation} \label{Alex2}
\left( ces_{\infty}(v) \right)^{\prime} \equiv \left(\widetilde{l_1(w)} \right)^{\prime \prime} \equiv 
\widetilde{l_1(w)}.
\end{equation}

In \cite[Theorem 1(d)]{LM15a}), it was shown that if a Banach ideal space $X$ has the Fatou property, then the Ces\`aro and Tandori 
function spaces $CX$ and $\widetilde{X}$ also have it. Moreover, if a space $X$ is order continuous, then the Ces\`aro function 
space $CX$ is order continuous as well (cf. \cite[Lemma 1]{LM15b}). However, the Tandori function space $\widetilde{X}$ is never 
order continuous (cf. \cite[Theorem 1(e)]{LM15a}), which implies immediately that this space contains an isomorphic copy of $l_{\infty}$. 

Next, we repeatedly make use of the fact that every Ces\`aro function spaces $CX$ contains a complemented copy of $L_1[0,1]$. 

%%%%%%%%%%%%%%%%%%%%%%%% Proposition 1
\begin{proposition}\label{Pro1n}
If $X$ is a Banach ideal function space on $I = [0,1]$ or $I = [0,\infty)$ such that the operator $C$ is bounded on $X$, then $CX$ contains 
a complemented copy of $L_1[0,1]$. Moreover, if $\chi_{[0, a]} \in X$ for $0 < a < 1$, then $\widetilde{X} \neq \{0\}$ and contains 
a complemented copy of $L_{\infty}[0, 1]$.
\end{proposition}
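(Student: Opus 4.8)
The plan is to realise the $L_1$-copy inside $CX$ and the $L_\infty$-copy inside $\widetilde X$ as the subspace of functions supported on a single fixed bounded subinterval of $I$, with the projection in each case being multiplication by the characteristic function of that interval.

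For the first assertion I would fix numbers $0<c'<c<d<1$, which lie in $I$ in both cases. The one substantial point is that $h:=\frac1x\,\chi_{[c,\infty)\cap I}$ belongs to $X$. Indeed, if $g$ is a weak unit of $X$, then $g\chi_{[c',d]}\in X$ by the ideal property, hence $C(g\chi_{[c',d]})\in X$ because $C$ is bounded on $X$; and for $x\ge c$ one has $C(g\chi_{[c',d]})(x)=\frac1x\int_{c'}^{\min(x,d)}g\ge \frac1x\int_{c'}^{c}g=:b/x$ with $0<b<\infty$ (finiteness because $Cg$ is a well-defined function), so $h\le b^{-1}C(g\chi_{[c',d]})$ and $h\in X$. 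Now take any $f\in L^0(I)$ with $\supp f\subseteq[c,d]$. Then $C|f|$ vanishes on $[0,c)$ and $C|f|(x)\le\frac1x\|f\|_{L_1}$ for $x\ge c$, so $C|f|\le\|f\|_{L_1}\,h$ and $\|f\|_{CX}\le\|h\|_X\,\|f\|_{L_1}$; conversely $C|f|(x)=\frac1x\|f\|_{L_1}$ for $d\le x\le1$, so $C|f|\ge\|f\|_{L_1}\,\frac1x\chi_{[d,1]}$ and $\|f\|_{CX}\ge\kappa\,\|f\|_{L_1}$ with $\kappa:=\bigl\|\tfrac1x\chi_{[d,1]}\bigr\|_X\in(0,\infty)$. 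Thus $\{f\in CX:\supp f\subseteq[c,d]\}$ coincides, with equivalent norms, with $L_1([c,d],dx)$, which is isometric to $L_1[0,1]$; and since $C(|f|\chi_{[c,d]})\le C|f|$ pointwise for every $f\in CX$, the map $f\mapsto f\chi_{[c,d]}$ is a norm-one projection of $CX$ onto that subspace.

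For the ``moreover'' part I would first note that, $L_\infty[0,1]$ being an injective Banach space (equivalently $L_\infty[0,1]\cong\ell_\infty$, which is $1$-injective), every isomorphic copy of it inside a Banach space is automatically complemented; so it suffices to embed $L_\infty[0,1]$ isomorphically into $\widetilde X$. Fix $0<a'<a<1$ with $\chi_{[0,a]}\in X$; then $\chi_{[0,a']}\in X$ as well, both with positive $X$-norm. If $f\in L_\infty(I)$ is supported in $[a',a]$, then its decreasing majorant $\widetilde f$ equals $\|f\|_{L_\infty}$ on $[0,a']$, is $\le\|f\|_{L_\infty}$ on $[a',a]$, and vanishes beyond $a$; hence $\|f\|_{L_\infty}\chi_{[0,a']}\le\widetilde f\le\|f\|_{L_\infty}\chi_{[0,a]}$, whence $\|\chi_{[0,a']}\|_X\,\|f\|_{L_\infty}\le\|f\|_{\widetilde X}\le\|\chi_{[0,a]}\|_X\,\|f\|_{L_\infty}$. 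So $f\mapsto f$ embeds $L_\infty([a',a],dx)\cong L_\infty[0,1]$ isomorphically into $\widetilde X$, and taking $f=\chi_{[a',a]}$ already gives $\widetilde X\neq\{0\}$.

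The only places where anything beyond bookkeeping happens are the membership $\frac1x\chi_{[c,\infty)\cap I}\in X$, which is exactly where boundedness of $C$ and the weak unit enter, and the pointwise inequalities $C|f|\le\|f\|_{L_1}h$ and $C(|f|\chi_{[c,d]})\le C|f|$; the latter is what makes multiplication by $\chi_{[c,d]}$ a contraction on $CX$, so the projection comes for free. For the $L_\infty$ statement the one conceptual ingredient is the injectivity of $L_\infty[0,1]$: one could instead try to project $\widetilde X$ onto the copy by a multiplication operator, but the range of $f\mapsto f\chi_{[a',a]}$ is the whole of $\widetilde X$ restricted to $[a',a]$, which is strictly larger than $L_\infty([a',a],dx)$ in general (for instance when $X=L_1$), so the injectivity route is the clean one.
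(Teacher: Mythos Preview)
Your proof is correct, and for the $CX$ part it follows essentially the same route as the paper: both apply $C$ to a truncated weak unit to show that a function of the form $\frac{1}{x}\chi_{[c,\infty)\cap I}$ (respectively $\chi_{[a,1]}$ when $I=[0,1]$) belongs to $X$, then sandwich $C|f|$ for $f$ supported on a fixed subinterval between multiples of such functions, and project by multiplication with the characteristic function of that interval.

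For the $\widetilde X$ part your argument via injectivity of $L_\infty$ is valid but differs from the paper's, which uses the very multiplication projection $Pf=f\chi_{[a',a]}$ that you dismiss. Your stated reason for avoiding it---that the range $\widetilde X|_{[a',a]}$ can be strictly larger than $L_\infty([a',a])$, ``for instance when $X=L_1$''---is incorrect. If $f\in\widetilde X$ then $\widetilde f\in X\subset L^0(I)$ is non-increasing and finite a.e., hence finite at every positive point; in particular $\widetilde f(a')<\infty$, so $|f(s)|\le\widetilde f(a')$ for a.e.\ $s\ge a'$ and therefore $f\chi_{[a',a]}\in L_\infty$. Thus the range of $P$ on $\widetilde X$ is exactly $L_\infty([a',a])$, and since $\widetilde{f\chi_{[a',a]}}\le\widetilde f$ pointwise, $P$ is already a norm-one projection of $\widetilde X$ onto the $L_\infty$-copy. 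Your injectivity route reaches the conclusion with no further computation, but it relies on the nontrivial fact that $\ell_\infty$ is $1$-injective, whereas the paper's direct projection is entirely elementary.
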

%%%%%%%%%%%%%%%%%
\proof
Suppose that $I = [0,1]$. Since $\supp X = I$ and the operator $C$ is bounded on $X$, then $\chi_{[a,1]} \in X$ 
for any $0 < a < 1$. In fact, let $f_0\in X$ with $f_0(x) > 0$ a.e. on $I$. Then, $f_0 \chi_{[0, a]} \in X$ and $\int_0^a f_0(t) \, dt = c > 0$. 
Therefore, from the estimate
\begin{eqnarray*}
C(f_0 \chi_{[0, a]})(x) 
&\geq& 
\frac{1}{x} \int_0^x f_0(t) \chi_{[0, a]}(t) dt \geq \frac{1}{x} \int_0^x f_0(t) \chi_{[0, a]}(t) dt \cdot \chi_{[a, 1]}(x) \\
&=&
\frac{1}{x} \int_0^a f_0(t) dt \cdot \chi_{[a, 1]}(x) \geq c \, \chi_{[a, 1]}(x), ~ 0 < x \leq 1,
\end{eqnarray*}
and from the boundedness of $C$ on $X$ it follows that $\chi_{[a,1]} \in X$. Now, for $0 < a < b < 1$ one has 
\begin{eqnarray*}
C( f \chi_{[a,b]})(x) 
&=&
\frac{1}{x}\int_a^x|f(t)| \,dt \, \chi_{[a,b]}(x) + \frac{1}{x}\int_a^b|f(t)| \,dt \, \chi_{[b,1]}(x) \\
&\leq&
\frac{1}{a}\int_a^b|f(t)| \,dt \,\Big[ \chi_{[a,b]} (x) + \chi_{[b,1]} (x) \Big] = \frac{1}{a} \, \|f \|_{L_1[a, b]} \cdot  \chi_{[a,1]} (x),
\end{eqnarray*}
and
\begin{eqnarray*}
C( f \chi_{[a,b]})(x) 
&=&
\frac{1}{x}\int_a^x|f(t)| \chi_{[a,b]} (t) \,dt \geq \frac{1}{x}\int_a^x|f(t)| \chi_{[a,b]} (t) \,dt \cdot \chi_{[b,1]} (x) \\
&\geq&
 \frac{1}{x}\int_a^b |f(t)| \,dt \cdot \chi_{[b,1]} (x) = \| f \|_{L_1[a, b]} \cdot \chi_{[b,1]} (x).
\end{eqnarray*}
Thus,
\begin{equation*}
d \, \|f \|_{L_1[a, b]} \leq \| f \chi_{[a,b]}\|_{CX} \leq \frac{D}{a}\|f \|_{L_1[a, b]},
\end{equation*}
where $d=\|\chi_{[b,1]}\|_X$ and $D=\|\chi_{[a,1]}\|_X$ are finite. Therefore, $CX|_{[a,b]}\simeq L_1[a,b]\simeq L_1[0,1]$ 
and, since the projection $P: f \mapsto f\chi_{[a,b]}$ is bounded, the first claim of the proposition is proved
if $I=[0,1]$.
The case $I = [0,\infty)$ can be treated in the same way, only the norm $\|\chi_{[b,1]}\|_X$ should be replaced with
 $\| \frac{1}{x} \,\chi_{[b,\infty)}(x)\|_X$. 

Regarding to the space $\widetilde{X}$ we note that under the conditions imposed on $X,$
by \eqref{equality}, we have 
$$ 
\widetilde{f \chi_{[a, b]}} \leq \| f  \|_{L_{\infty}[a, b]} \cdot \widetilde{\chi_{[a, b]}} = \| f \|_{L_{\infty}[a, b]} \cdot  \chi_{[0, b]}
$$
and conversely
$$ 
\widetilde{f \chi_{[a, b]}} \geq \widetilde{f \chi_{[a, b]}} \cdot \chi_{[0, a]} = \| f \|_{L_{\infty}[a, b]} \cdot \chi_{[0, a]},
$$
whence
$$
\|  \chi_{[0, a]} \|_X \, \| f \|_{L_{\infty}[a, b]} \leq  \| f \chi_{[a, b]} \|_{\widetilde{X}} \leq \|  \chi_{[0, b]} \|_X \,  \| f  \|_{L_{\infty}[a, b]}.
$$
Thus, the image of the same projection $Pf = f  \, \chi_{[a,b]}$ is isomorphic to $L_{\infty}[0, 1]$. Since $P$ is bounded, the proof 
is complete.
\endproof

%%%%%%%%%%%%%%%%%%%%%%%%%%%%%%%%%%%%%%%%%% Section 3
\section{On the Schur and Dunford-Pettis properties of Ces\`aro and Tandori sequence spaces}
%%%%%%%%%%%%%%%%%%%%%%%%%%%%%

A Banach space $X$ is said to have the {\it Dunford-Pettis property} if, for all sequences $x_n \stackrel {w} \rightarrow 0$ 
in $X$ and $x_n^*\stackrel {w} \rightarrow 0$ in $X^*$, we have $\langle x^*_n, x_n \rangle \rightarrow 0$ as $n \rightarrow \infty$ 
or, equivalently, that any weakly compact operator $T: X \rightarrow Y$, where $Y$ is an arbitrary Banach space, is completely 
continuous (i.e., from $x_n \stackrel {w} \rightarrow 0$ it follows that $T(x_n)$ converges to $0$ in the norm of $Y$). Examples of 
spaces satisfying the Dunford-Pettis property are $l_1, c_0, l_{\infty}, L_1 (\mu), L_{\infty}(\mu)$ for every $\sigma$-finite measure 
$\mu$ and $C(K), C(K)^* = {\mathcal M}(K)$ for arbitrary compact Hausdorff set $K$ (cf. \cite[pp. 116-117]{AK06} and 
\cite[pp. 57-67]{Li04}). It is well-known that infinite dimensional reflexive spaces fail to have the Dunford-Pettis property. Moreover, 
if a dual space $X^*$ has the Dunford-Pettis property then so does $X$ (the reverse implication is not true) and complemented 
subspaces of spaces with the Dunford-Pettis property also have it.

Recall that a Banach space $X$ has the {\it Schur property} if for any sequence $x_n \stackrel {w} \rightarrow 0$ in $X$ we have  
$\| x_n \| \rightarrow 0$ as $n \rightarrow \infty$ or, equivalently, if every weakly compact operator from $X$ to arbitrary Banach space 
$Y$ is compact. Of course, spaces with the Schur property have also the Dunford-Pettis property. Even though it has been known 
since the famous Banach's book \cite[pp. 137-139]{Ba32} was published that the space $l_1$ has the Schur property, only a few 
natural infinite dimensional spaces enjoying it were found.

A survey of results related to the Dunford-Pettis property and the Schur property one can find in \cite{Di80} and \cite{Wn93}, 
respectively (see also \cite{CG94}).

We start with proving the Schur property in the space $\widetilde{l_1}$. Note that $\widetilde{l_1}$ is not isomorphic to $l_1$. In fact, 
$\{\frac{1}{n} e_n\}$ is a normalized unconditional basis in $\widetilde{l_1}$. On the other hand, $l_1$ has unique unconditional 
structure, i.e., each normalized unconditional basis in $l_1$ is equivalent to $\{e_n\}$ (cf. \cite[Theorem 2.b.9]{LT77}). Therefore, 
if we assume that $\widetilde{l_1}$ is isomorphic to $l_1$, then $\{\frac{1}{n}e_n\}$ would be equivalent to $\{e_n\}$. But it is not 
the case since we have both $\|\sum_{n=1}^k\frac{1}{n}e_n\|_{\widetilde{l_1}}\approx \ln k$ and $\|\sum_{n=1}^ke_{n}\|_{l_1}= k$, 
$k\in\mathbb{N}$. 

%%%%%%%%%%%%%%%%%%%% Theorem 1
\begin{theorem}\label{schur}
The space $\widetilde{l_1}$ has the Schur property.
\end{theorem}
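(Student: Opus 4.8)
The plan is to show directly that every weakly null sequence in $\widetilde{l_1}$ is norm null. Let $(x^{(m)})_{m=1}^\infty$ be a sequence in $\widetilde{l_1}$ with $x^{(m)} \stackrel{w}{\to} 0$; by the uniform boundedness principle we may assume $\|x^{(m)}\|_{\widetilde{l_1}} \le 1$ for all $m$. Write $x^{(m)} = (x^{(m)}_n)_n$ and recall that $\|x^{(m)}\|_{\widetilde{l_1}} = \sum_{n} \widetilde{x^{(m)}_n} = \sum_n \sup_{k\ge n} |x^{(m)}_k|$. The first observation is that weak convergence to $0$ in $\widetilde{l_1}$ forces coordinatewise convergence to $0$: the coordinate functionals $e_n^*$ belong to $(\widetilde{l_1})^* $ (indeed to the Köthe dual $ces_\infty$, as noted in the excerpt via \eqref{Alex2}), so $x^{(m)}_n \to 0$ as $m\to\infty$ for each fixed $n$. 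Moreover, since $|x^{(m)}_n| \le \widetilde{x^{(m)}_n}$ and $\sum_n \widetilde{x^{(m)}_n} \le 1$, the tails are uniformly small in a suitable weighted sense; but the subtlety is that the tail of $\sum_n \widetilde{x^{(m)}_n}$ need not be uniformly small without further argument, because the supremum defining $\widetilde{x^{(m)}_n}$ can be attained far out.

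The main step is a gliding-hump / subsequence argument to rule out a weakly null sequence that is bounded away from $0$ in norm. Suppose for contradiction that $\|x^{(m)}\|_{\widetilde{l_1}} \ge \delta > 0$ for all $m$ (after passing to a subsequence). Using coordinatewise convergence to $0$, I would extract a subsequence $(x^{(m_j)})_j$ and an increasing sequence of indices $0 = p_0 < p_1 < p_2 < \cdots$ such that the "mass'' $\sum_{n=p_{j-1}+1}^{p_j} \widetilde{x^{(m_j)}_n}$ captures at least, say, $\delta/2$ of the norm of $x^{(m_j)}$, while the contributions of $x^{(m_j)}$ on the blocks $[1,p_{j-1}]$ and $(p_j,\infty)$ are negligible — here the key point is that $\widetilde{x^{(m_j)}_n}$ for $n\le p_{j-1}$ is controlled by $\max\{\max_{p_{j-1}<k} |x^{(m_j)}_k|\,,\ \text{small}\}$, and one must choose $p_j$ large enough to also swallow the "influence'' of large coordinates beyond $p_j$. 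This produces an essentially disjointly supported block sequence; the natural test functional, the sign pattern $u^{(j)} = \sum_{n=p_{j-1}+1}^{p_j} \varepsilon_n e_n^*$ with $\varepsilon_n = \operatorname{sgn}(x^{(m_j)}_n)$, lies in the unit ball of $\widetilde{l_1}^* = ces_\infty$ uniformly (since a $\{-1,0,1\}$-valued finitely supported sequence has $\|\cdot\|_{ces_\infty} = \|C_d|\cdot|\,\|_\infty \le 1$), and pairing against it contradicts $x^{(m_j)} \stackrel{w}{\to} 0$.

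The hard part will be the bookkeeping in the gliding-hump construction: unlike in $l_1$, the Tandori norm is not disjointly additive, so truncating $x^{(m_j)}$ to a block changes the values $\widetilde{x^{(m_j)}_n}$ on earlier coordinates, and one has to argue that choosing the block endpoints $p_j$ along a rapidly increasing sequence (exploiting that for each fixed truncation level the coordinates eventually vanish, by coordinatewise convergence, and that $\sum_n \widetilde{x^{(m)}_n}\le 1$ forces $\widetilde{x^{(m)}_n}\to 0$ as $n\to\infty$ for each $m$) makes all the cross-terms summably small. Once the block sequence is genuinely (nearly) disjoint and norm-bounded below, the contradiction with weak nullity via the uniformly-bounded sign functionals is routine. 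An alternative, perhaps cleaner, route is to invoke the duality $\widetilde{l_1} = (ces_\infty)'$ together with the fact that $\widetilde{l_1}$ is an AL-type/order-continuous-dual situation and reduce to a known Schur-type criterion; but I expect the direct gliding-hump argument above to be the most self-contained, with the block-selection estimates being the only real obstacle.
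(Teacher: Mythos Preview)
Your gliding-hump outline contains a genuine gap at the final step. The sign functional
\[
u^{(j)}=\sum_{n=p_{j-1}+1}^{p_j}\varepsilon_n e_n^*,\qquad \varepsilon_n=\operatorname{sgn}\bigl(x^{(m_j)}_n\bigr),
\]
when paired with $x^{(m_j)}$, yields $\sum_{n\in B_j}|x^{(m_j)}_n|$, i.e.\ the $l_1$-mass of the block, \emph{not} the $\widetilde{l_1}$-mass $\sum_{n\in B_j}\widetilde{x^{(m_j)}_n}$ that you arranged to be at least $\delta/2$. There is no lower bound on the former in terms of the latter: a vector such as $\tfrac{1}{N}e_N$ has $\widetilde{l_1}$-norm $1$ but $l_1$-norm $1/N$. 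Thus the sign pattern in the unit ball of $ces_\infty$ need not witness any fixed lower bound, and no contradiction with weak nullity follows. (There is also the smaller issue that $u^{(j)}$ varies with $j$, so one must combine them into a single functional; but the main obstacle is that even the combined $\{-1,0,1\}$-valued functional still only detects $l_1$-mass.)

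The approach can be salvaged, but not with sign functionals: one has to take the \emph{norming} functionals $f^{(j)}\in B_{ces_\infty}$ for $x^{(m_j)}\chi_{B_j}$ (whose entries may grow like $n$), and then show that with sufficiently rapidly increasing block endpoints the sum $f=\sum_j f^{(j)}\chi_{B_j}$ stays in a fixed ball of $ces_\infty$; this requires controlling Ces\`aro averages across blocks and is the real work you did not address. The paper avoids this bookkeeping altogether and takes a different route: it uses $(\widetilde{l_1})^*=ces_\infty$ and runs a Baire category argument in the $w^*$-compact metrizable unit ball of $ces_\infty$, showing that for each $\varepsilon>0$ some $w^*$-open set of test functionals eventually pairs small against all $x^{(n)}$, and then manufactures from this a near-norming functional to force $\|x^{(n)}\|_{\widetilde{l_1}}\to 0$.
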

%%%%%%%%%%%%%
\proof
First, using (\ref{Alex1}) and the fact that $\widetilde{l_1}$ has an order continuous norm (cf. \cite{LM15b}) we obtain
$(\widetilde{l_1})^* = (\widetilde{l_1})^{\prime} = ces_{\infty}$. Now, let $\| x^{(n)} \|_{\widetilde{l_1}} \leq 1$ with 
$x^{(n)}\rightarrow 0$ weakly in $\widetilde{l_1}$ as $n \rightarrow \infty$. By the Banach-Alaoglu theorem 
the closed unit ball $B$ in $ces_{\infty}$ is $w^*$-compact and metrizable, so, in particular, it is a $w^*$-complete 
metric space. For any $\varepsilon > 0$ we put
\begin{equation*}
B_m=\bigcap_{n\geq m} \{f\in B: |\langle f, x^{(n)}\rangle|\leq \varepsilon\}.
\end{equation*}
Then the sets $B_m$ are $w^*$-closed, $B_1\subset B_2  \subset \dots$ and $B=\bigcup_{m=1}^\infty B_m$.
Thus, by the Baire theorem, there are $N, m_1\in \mathbb{N}$, $g = (g_k) \in B_{m_1}$ and $\delta>0$ such that 
\begin{equation*}
U: = \{f = (f_k) \in B: |f_k - g_k| < \delta, 1\leq k\leq N\} \subset B_{m_1}.
\end{equation*}
Consequently, $U\subset B_{m}$ for each $m\geq m_1$. Fix $N_1>N$ such that 
\begin{equation} \label{3.1}
\frac{\sum _{k=1}^{N}|g_k|}{N_1}<\varepsilon.
\end{equation}
Clearly, the weak convergence of $\{x^{(n)}\}$ implies the coordinate convergence, so that there 
is $m_2 \in \mathbb N$ such that for $n \geq m_2$
\begin{equation} \label{3.2}
\|x^{(n)}\chi_{[1,N_1)}\|_{\widetilde{l_1}}\leq \varepsilon.
\end{equation}
For every $n \in \mathbb N$ there is $f^{(n)}\in B$ such that 
\begin{equation*}
\|x^{(n)}\chi_{[N_1,\infty)}\|_{\widetilde{l_1}}=\langle f^{(n)},x^{(n)}\chi_{[N_1,\infty)}\rangle.
\end{equation*}
Without loss of generality, we may assume that $\supp f^{(n)}\subset [N_1,\infty)$. Setting
\begin{equation*}
g^{(n)} = g\chi_{[1,N)}+ (1-\varepsilon)f^{(n)},
\end{equation*}
we will show that $g^{(n)}\in U$ for all $n \in \mathbb N$. Since $g^{(n)}_k=g_k$ for each $1\leq k\leq N$, it is enough to 
check only that $\|g^{(n)}\|_{ces_{\infty}} \leq 1$. We have
\begin{equation*}
C|g^{(n)}|(j)=\left\{
\begin{array}{ccc}
 C|g|(j), & & ~ {\rm if} ~ j < N, \\ 
\frac{1}{j}\sum_{k=1}^N|g_k|, &  & ~ {\rm if} ~ N \leq j < N_1 \\ 
\frac{1}{j}(\sum_{k=1}^N|g_k|+(1-\varepsilon)\sum_{k=N_1}^j |f^{(n)}_k|), &  & ~ {\rm if} ~ j\ge N_1.
\end{array}
\right.
\end{equation*}
Consequently, from (\ref{3.1}) it follows that  $C|g^{(n)}|(j)\leq 1$ for each $j \in \mathbb N$, i.e. $g^{(n)}\in U$ and therefore 
$g^{(n)} \in B_m$ if $m > m_1$. Finally, applying (\ref{3.2}), for $n \geq m_0: =\max\{m_1,m_2\}$ we get
\begin{eqnarray*}
\|x^{(n)}\|_{\widetilde{l_1}}&\leq& \|x^{(n)}\chi_{[1, N_1)}\|_{\widetilde{l_1}}+\|x^{(n)}\chi_{[N_1,\infty)}\|_{\widetilde{l_1}} \leq \varepsilon 
+ |\sum_{k=N_1}^{\infty} x^{(n)}_kf^{(n)}_k|\\ 
&=&
\varepsilon +\Big|\frac{1}{1-\varepsilon}\Big( (1 - \varepsilon) \sum_{k=N_1}^{\infty} x^{(n)}_kf^{(n)}_k +\sum_{k=1}^{N} x^{(n)}_kg_k \Big) 
- \frac{1}{1-\varepsilon}\sum_{k=1}^{N} x^{(n)}_kg_k \Big|\\ 
&\leq &
\varepsilon + \frac{1}{1-\varepsilon} |\sum_{k=1}^{\infty} x^{(n)}_k g^{(n)}_k| + \frac{1}{1-\varepsilon}\|x^{(n)}\chi_{[1,N)}\|_{\widetilde{l^1}}\|g\|_{ces_{\infty}} \\
&=&
\varepsilon + \frac{1}{1-\varepsilon} | \langle g^n, x^n \rangle | + \frac{1}{1-\varepsilon}\|x^{(n)}\chi_{[1,N)}\|_{\widetilde{l^1}}\|g\|_{ces_{\infty}} 
\leq \varepsilon + \frac{2 \varepsilon}{1-\varepsilon},
\end{eqnarray*}
which shows that $\lim_{n \rightarrow \infty} \|x^{(n)}\|_{\widetilde{l_1}} = 0$, as desired.
\endproof

%%%%%%%%%%%%%%%%%%% Corollary 1
\begin{corollary} \label{cesDP}
The space $ces_{\infty}^0$ has the Dunford-Pettis property. 
\end{corollary}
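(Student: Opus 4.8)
The plan is to obtain this as a straightforward consequence of Theorem \ref{schur} together with the duality relations recorded above and the standard permanence properties of the Dunford-Pettis property. The first step is to identify the Banach dual of $ces_\infty^0$. By definition $ces_\infty^0$ is the order continuous part of $ces_\infty$, hence it is itself an order continuous Banach ideal sequence space, and therefore $(ces_\infty^0)^* = (ces_\infty^0)'$.

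Next I would check that passing to the order continuous part does not change the K\"othe dual, i.e. $(ces_\infty^0)' = (ces_\infty)'$. The inclusion $ces_\infty^0 \hookrightarrow ces_\infty$ gives $(ces_\infty)' \hookrightarrow (ces_\infty^0)'$ for free. For the reverse, note that every finitely supported sequence belongs to $ces_\infty^0$ with norm inherited from $ces_\infty$; so for $f \in (ces_\infty^0)'$ and $g \in ces_\infty$ with $\|g\|_{ces_\infty} \leq 1$, applying the generalized H\"older--Rogers inequality \eqref{HRI} to the truncations $g\chi_{[1,n]} \in ces_\infty^0$ and letting $n \to \infty$ via monotone convergence yields $\sum_{k} |f_k g_k| \leq \|f\|_{(ces_\infty^0)'}$, whence $f \in (ces_\infty)'$. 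Combined with \eqref{Alex2} (taken with the weight $w \equiv 1$, so that $v \equiv 1$ and $\widetilde{l_1(w)} = \widetilde{l_1}$), this gives $(ces_\infty^0)^* = (ces_\infty)' = \widetilde{l_1}$.

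To finish, recall that by Theorem \ref{schur} the space $\widetilde{l_1}$ has the Schur property, hence it has the Dunford-Pettis property; and since the Dunford-Pettis property passes from a dual space $X^*$ to $X$, the identification $(ces_\infty^0)^* = \widetilde{l_1}$ forces $ces_\infty^0$ to have the Dunford-Pettis property. There is essentially no obstacle here — the statement is a genuine corollary of Theorem \ref{schur}; the only mildly delicate ingredient is the routine verification that $(ces_\infty^0)' = (ces_\infty)'$, i.e. that the finitely supported sequences already norm the K\"othe dual.
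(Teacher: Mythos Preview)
Your proof is correct and follows essentially the same route as the paper: identify $(ces_\infty^0)^* = \widetilde{l_1}$ via order continuity and \eqref{Alex2}, then invoke Theorem~\ref{schur} and the fact that the Dunford--Pettis property passes from $X^*$ to $X$. The only difference is cosmetic: you spell out the standard verification that $(ces_\infty^0)' = (ces_\infty)'$ via truncations, which the paper leaves implicit in its appeal to \eqref{Alex2}.
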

\proof
From (\ref{Alex2}) we have $(ces_{\infty}^0)^* = (ces_{\infty}^0)^{\prime} = \widetilde{l_1}$ and by the fact that a Banach 
space has the Dunford-Pettis property whenever its dual space has it, the result follows from Theorem \ref{schur}.
\endproof

Although the spaces $\widetilde{l_1}$ and $l_1$ are not isomorphic, $l_1$ is isomorphic to a subspace of $\widetilde{l_1}$ 
and so $\widetilde{l_1}$ can be treated as an extension of $l_1$ with preserving the Schur property.

%%%%%%%%%%%%%%%%%%%%% Theorem 2
\begin{theorem}\label{Thm2}
 The basic sequence $\{2^{-i}e_{2^i}\}_{i=0}^\infty$ is equivalent in the space $\widetilde{l_1}$ to the canonical
$l_1$-basis.
\end{theorem}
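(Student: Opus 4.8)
The plan is to compute the $\widetilde{l_1}$-norm of a general finite linear combination $x=\sum_{i=0}^{m}c_i\,2^{-i}e_{2^i}$ explicitly and then sandwich it between multiples of $\sum_i|c_i|$; this two-sided estimate, valid for all finitely supported scalar sequences $(c_i)$, is precisely the asserted equivalence to the canonical $l_1$-basis. First note that $\{2^{-i}e_{2^i}\}_{i\ge 0}$ is a subsequence of the normalized unconditional basis $\{n^{-1}e_n\}$ of $\widetilde{l_1}$ recalled above, hence in particular a (normalized, unconditional) basic sequence, so only the norm estimate has to be proved.

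The key observation is that the only nonzero coordinates of $x$ sit at the dyadic positions $2^i$ (with value $2^{-i}|c_i|$), so the decreasing majorant $\widetilde{x}$ is constant on each dyadic block. Concretely, I would partition $\mathbb{N}=\{1\}\cup\bigcup_{i\ge 1}\bigl((2^{i-1},2^i]\cap\mathbb{N}\bigr)$, observe that for $i\ge 1$ the $i$-th block has exactly $2^{i-1}$ elements, and note that for $k$ in the $i$-th block the powers of two that are $\ge k$ are exactly $2^i,2^{i+1},\dots$; hence $\widetilde{x_k}=M_i:=\sup_{j\ge i}2^{-j}|c_j|$ throughout that block, and likewise $\widetilde{x_1}=M_0:=\sup_{j\ge 0}2^{-j}|c_j|$. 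Summing over the blocks gives the exact formula
\begin{equation*}
\|x\|_{\widetilde{l_1}}=M_0+\sum_{i\ge 1}2^{i-1}M_i .
\end{equation*}

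For the lower bound I would keep only the $j=i$ term in each $M_i$, getting $\|x\|_{\widetilde{l_1}}\ge |c_0|+\sum_{i\ge 1}2^{i-1}\cdot 2^{-i}|c_i|\ge \tfrac12\sum_{i\ge 0}|c_i|$. For the upper bound I would estimate the supremum defining $M_i$ by the corresponding series and interchange the order of summation (all terms being nonnegative):
\begin{equation*}
\sum_{i\ge 1}2^{i-1}M_i\le\sum_{i\ge 1}2^{i-1}\sum_{j\ge i}2^{-j}|c_j|=\sum_{j\ge 1}2^{-j}|c_j|\sum_{i=1}^{j}2^{i-1}=\sum_{j\ge 1}\bigl(1-2^{-j}\bigr)|c_j|\le\sum_j|c_j|,
\end{equation*}
while $M_0\le\sum_{j\ge 0}2^{-j}|c_j|\le\sum_j|c_j|$, so altogether $\|x\|_{\widetilde{l_1}}\le 2\sum_i|c_i|$. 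Combining, $\tfrac12\sum_i|c_i|\le\bigl\|\sum_i c_i\,2^{-i}e_{2^i}\bigr\|_{\widetilde{l_1}}\le 2\sum_i|c_i|$ for every finitely supported $(c_i)$, which is the claim.

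There is no genuine obstacle here: the whole argument is a direct computation, and the only steps needing care are the bookkeeping of the dyadic blocks together with their cardinalities $2^{i-1}$, and the interchange of summations in the upper estimate — which is the one real computational point.
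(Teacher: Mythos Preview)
Your proof is correct, and it is considerably simpler than the paper's. The difference is in the lower bound. You exploit the fact that the support of $x$ sits at dyadic positions to compute the norm exactly: partitioning $\mathbb{N}$ into $\{1\}$ and the blocks $(2^{i-1},2^i]$, you obtain the closed formula $\|x\|_{\widetilde{l_1}}=M_0+\sum_{i\ge 1}2^{i-1}M_i$ with $M_i=\sup_{j\ge i}2^{-j}|c_j|$, and then the lower bound $\tfrac12\sum_i|c_i|$ drops out immediately by keeping only the diagonal term $j=i$ in each $M_i$. The paper instead treats the upper bound by the triangle inequality (constant $1$, using $\|e_m\|_{\widetilde{l_1}}=m$) and attacks the lower bound by a rather elaborate combinatorial selection: it builds a subset $I=\{k_0>k_1>\dots>k_l\}$ of indices via a greedy procedure guaranteeing that the ``jumps'' $2^{-k_{i+1}}c_{k_{i+1}}-2^{-k_i}c_{k_i}$ are positive, then refines $I$ to a further subset $I_1$ to control these jumps from below, arriving at a constant $1/36$. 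Your direct computation avoids all of this bookkeeping and yields the sharper constant $1/2$; in fact, since the paper's upper bound is $1$, your lower bound shows the equivalence constants are at worst $2$. The paper's approach, on the other hand, illustrates a more hands-on decomposition of the majorant that does not rely on having a closed formula for the norm, which could in principle be adapted to less structured situations; for this particular statement, though, your argument is both shorter and tighter.
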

%%%%%%%%%%%%
%%%%%%%%%%%%% Proof Thm 2
\proof We prove that for all $n\in\mathbb{N}$ and $c_i \geq 0, i = 0, 1,\dots, n$,
$$
\frac{1}{36} \, \sum_{i=0}^n c_i\le\Big\|\sum_{i=0}^n c_i2^{-i}e_{2^i}\Big\|_{\widetilde{l_1}} \leq \,\sum_{i=0}^n c_i.$$
Since $\|e_m\|_{\widetilde{l_1}}=m,$ for every $m\in\mathbb{N}$, the right-hand side inequality is obvious. Thus, it is enough 
to check only the opposite inequality.

We choose the subset of indices from the set $\{0, 1, \ldots, n\}$ according to the following procedure.
Let $k_0=n.$ If $2^{-k_0}c_{k_0}\ge 2^{-k}c_k$ for each $k=0,\dots,n-1,$
we put $I=\{k_0\}$ and finish. Otherwise, we define
$$
k_1:=\max\{k=0,\dots,n-1:\,2^{-k_0}c_{k_0}< 2^{-k}c_k\}.$$
Similarly, if $2^{-k_1}c_{k_1}\ge 2^{-k}c_k$ for every $k=0,\dots,k_1-1,$
we set $I=\{k_0,k_1\}$ and finish. Otherwise, let
$$
k_2:=\max\{k=0,\dots,k_1-1:\,2^{-k_1}c_{k_1}< 2^{-k}c_k\}.$$
Proceeding in the same way, we construct the set
$$
I=\{k_i\}_{i=0}^l \subset \{0,1,\dots,n\},\;\;n=k_0>k_1>\dots>k_l\ge 0$$
satisfying the following conditions:
\begin{equation}\label{33}
 2^{-k_i}c_{k_i}\ge 2^{-k}c_k\;\;\mbox{if}\;\;k_{i+1}<k\le k_i,
\end{equation}
\begin{equation}\label{34}
 2^{-k_l}c_{k_l}\ge 2^{-k}c_k\;\;\mbox{if}\;\;0\le k\le k_l
\end{equation}
and
\begin{equation}\label{35}
 2^{-k_i}c_{k_i}< 2^{-k_{i+1}}c_{k_{i+1}}\;\;\mbox{for all}\;\;i=0,1,\dots,l-1.
\end{equation}
Observe that from \eqref{33} and \eqref{34} it follows that
$$
c_k\le 2^{k-k_i}c_{k_i}\;\;\mbox{if}\;\;k_{i+1}<k\le k_i,i=0,1,\dots,l-1,$$
and
$$
 c_{k}\le 2^{k-k_l}c_{k_l}\;\;\mbox{if}\;\;0\le k\le k_l.$$
Hence,
\begin{equation}\label{36}
\sum_{k=k_{i+1}}^{k_i} c_k\le 2c_{k_i},\;\;i=0,1,\dots,l-1,\;\;\mbox{and}\;\;\sum_{k=0}^{k_l} c_k\le 2c_{k_l}.
\end{equation}
Further, we define the set $I_1\subset I$ as follows:
$$
I_1= \Big\{k_i\in I,i=1,\dots,l-1:\,c_{k_i}<\frac32 c_{k_{i+1}}\Big\}.$$
Then, $I_1=\{k_{i_j}\}_{j=1}^s,$ where $n\ge k_{i_1}>k_{i_2}>\dots >k_{i_s},$ $0\le i_1<i_2<\dots<i_s<l.$
It is easy to see that $c_{k_{i+1}}\le\frac23 c_{k_i}$ whenever $i\ne i_j,$ $j=1,\dots,s.$ 
Therefore, if $j=1,\dots,s,$
 \begin{equation}\label{37}
\sum_{i=i_j}^{i_{j+1}-1} c_{k_i}\le c_{k_{i_j}}+\sum_{i=i_j+1}^{i_{j+1}-1} \Big(\frac23\Big)^{i-i_j-1} c_{k_{i_j+1}}\le 
\frac32 c_{k_{i_j+1}}+ 3c_{k_{i_j+1}}= \frac92 c_{k_{i_j+1}},\;\;j=1,\dots,s
\end{equation}
(in what follows we set $i_{s+1}=l+1$).
Moreover, if $n\not\in I_1,$ we have
 \begin{equation}\label{38}
\sum_{i=0}^{i_1-1} c_{k_i}\le 3 \, c_{k_{0}}=3 c_{n}.
\end{equation}
By the definition of norm in $\widetilde{l_1}$  and from inequalities \eqref{33}, \eqref{34} and \eqref{35} we obtain
 \begin{eqnarray*}
\Big\|\sum_{i=0}^n c_i2^{-i}e_{2^i}\Big\|_{\widetilde{l_1}} &=&  2^{-k_0}c_{k_0}\cdot 2^{k_0}+(2^{-k_1}c_{k_1}-2^{-k_0}c_{k_0}) \, 2^{k_1}+\dots\\
&+& (2^{-k_{i+1}}c_{k_{i+1}}-2^{-k_i}c_{k_i}) \, 2^{k_{i+1}}+\dots +(2^{-k_l}c_{k_l}-2^{-k_{l-1}}c_{k_{l-1}}) \, 2^{k_l}\\
&=& c_{k_0}+ (c_{k_{1}}-2^{k_1-k_0}c_{k_0})+\dots +(c_{k_{i+1}}-2^{k_{i+1}-k_{i}}c_{k_{i}})+\dots \\
&\dots& + \, (c_{k_{l}}-2^{k_{l}-k_{l-1}}c_{k_{l-1}})\\
&\ge& c_n+\sum_{j=1}^s (c_{k_{i_j+1}}-2^{k_{i_j+1}-k_{i_j}}c_{k_{i_j}}).
 \end{eqnarray*}
If $k_i\in I_1,$ then, by the definition of $I_1,$ we have
$$
(c_{k_{i+1}}-2^{k_{i+1}-k_{i}}c_{k_{i}})\ge c_{k_{i+1}}\Big(1-\frac32\cdot 2^{k_{i+1}-k_{i}}\Big)\ge \frac14 c_{k_{i+1}},$$
because of $k_i>k_{i+1}.$ Therefore,
\begin{equation}\label{39}
 \Big\|\sum_{i=0}^n c_i2^{-i}e_{2^i}\Big\|_{\widetilde{l_1}}\ge c_n+\frac14 \sum_{j=1}^s c_{k_{i_j+1}}.
\end{equation}
On the other hand, by \eqref{36}--\eqref{38}
\begin{eqnarray*}
 \sum_{i=0}^n c_i &=& \sum_{i=0}^{l-1}\sum_{k=k_{i+1}+1}^{k_i} c_k+ \sum_{k=0}^{k_l} c_k\le 2 \sum_{i=0}^l c_{k_i}\\
&\le& 2\Big(\sum_{i=0}^{i_{1}-1} c_{k_i} + \sum_{j=1}^{s}\sum_{i=i_j}^{i_{j+1}-1} c_{k_i}\Big)\le 9  \sum_{j=1}^{s} c_{k_{i_j+1}}+6c_n.
\end{eqnarray*}
Combining the latter inequality with \eqref{39}, we infer 
$$
\sum_{i=0}^n c_i \le 36\cdot \Big\|\sum_{i=0}^n c_i2^{-i}e_{2^i}\Big\|_{\widetilde{l_1}},$$
and the proof is complete.
\endproof

Since the space $\widetilde{l_1}$ is order continuous, then from (\ref{Alex1}) it follows $(\widetilde{l_1})^*= (\widetilde{l_1})^{\prime} = ces_{\infty}$. 
Therefore, taking into account that the space $ces_{\infty}$ has the Fatou property, we obtain 

%%%%%%%%%%%%%%% Corollary 2
\begin{corollary} \label{Cor2}
The basic sequence $\{2^{i}e_{2^i}\}_{i=0}^\infty$ is equivalent in the space $ces_{\infty}$ to the canonical
$c_0$-basis and $l_{\infty}$ is embedded into $ces_{\infty}$. 
\end{corollary}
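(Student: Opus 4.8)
The plan is to obtain both assertions of the corollary by dualizing Theorem~\ref{Thm2}, using the identification $(\widetilde{l_1})^{*}=(\widetilde{l_1})'=ces_{\infty}$ (valid since $\widetilde{l_1}$ is order continuous, cf.\ \eqref{Alex1}) together with the Fatou property of $ces_{\infty}$. Under the canonical duality $\langle f,a\rangle=\sum_{k}f_{k}a_{k}$ the sequence $\{2^{i}e_{2^{i}}\}_{i=0}^{\infty}\subset ces_{\infty}$ is exactly biorthogonal to $\{2^{-i}e_{2^{i}}\}_{i=0}^{\infty}\subset\widetilde{l_1}$, and one checks directly that $\|2^{-i}e_{2^{i}}\|_{\widetilde{l_1}}=2^{-i}\|e_{2^{i}}\|_{\widetilde{l_1}}=1$ and $\|2^{i}e_{2^{i}}\|_{ces_{\infty}}=1$. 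First I would prove the lower $c_{0}$-estimate: for finitely supported scalars $(a_{i})$, choosing $j$ with $|a_{j}|=\max_{i}|a_{i}|$ and testing $\sum_{i}a_{i}2^{i}e_{2^{i}}$ against the norm-one vector $2^{-j}e_{2^{j}}$ gives, by biorthogonality, $\|\sum_{i}a_{i}2^{i}e_{2^{i}}\|_{ces_{\infty}}\ge|a_{j}|=\max_{i}|a_{i}|$.

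For the matching upper estimate I would again pass to the predual: $\|\sum_{i}a_{i}2^{i}e_{2^{i}}\|_{ces_{\infty}}=\sup\bigl\{\bigl|\sum_{i}a_{i}2^{i}x_{2^{i}}\bigr|:\ x\in\widetilde{l_1},\ \|x\|_{\widetilde{l_1}}\le 1\bigr\}\le\max_{i}|a_{i}|\cdot\sup\bigl\{\sum_{i}2^{i}|x_{2^{i}}|:\ \|x\|_{\widetilde{l_1}}\le 1\bigr\}$, so everything reduces to showing that the functional $x\mapsto\sum_{i}2^{i}|x_{2^{i}}|$ is bounded on $\widetilde{l_1}$. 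This is the one genuine computation: since $\widetilde{x_{k}}\ge|x_{2^{i}}|$ whenever $k\le 2^{i}$, the dyadic block $\{2^{i-1}+1,\dots,2^{i}\}$, of cardinality $2^{i-1}$, satisfies $\sum_{k=2^{i-1}+1}^{2^{i}}\widetilde{x_{k}}\ge 2^{i-1}|x_{2^{i}}|$, and summing over these pairwise disjoint blocks (with $i=0$ handled separately via $|x_{1}|\le\widetilde{x_{1}}$) yields $\sum_{i\ge 0}2^{i}|x_{2^{i}}|\le 2\|x\|_{\widetilde{l_1}}$. Hence $\{2^{i}e_{2^{i}}\}$ is equivalent to the unit vector basis of $c_{0}$. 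I would remark that the same estimate shows the natural map $x\mapsto\sum_{i}x_{2^{i}}e_{2^{i}}$ is a bounded projection of $\widetilde{l_1}$ onto the closed linear span of $\{2^{-i}e_{2^{i}}\}$, so this copy of $l_{1}$ is even complemented, which gives an alternative derivation of the corollary by dualizing the projection.

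Finally, to upgrade the copy of $c_{0}$ to a copy of $l_{\infty}$ I would use the Fatou property of $ces_{\infty}$. Given $(a_{i})\in l_{\infty}$, we may assume $a_{i}\ge 0$ (by the ideal property). The partial sums $y_{n}=\sum_{i=0}^{n}a_{i}2^{i}e_{2^{i}}$ increase coordinatewise to $y=\sum_{i=0}^{\infty}a_{i}2^{i}e_{2^{i}}$ with $\sup_{n}\|y_{n}\|_{ces_{\infty}}\le 2\|(a_{i})\|_{l_{\infty}}$, so by the Fatou property $y\in ces_{\infty}$ and $\|y\|_{ces_{\infty}}\le 2\|(a_{i})\|_{l_{\infty}}$; testing the Ces\`aro average at $n=2^{m}$ gives $\|y\|_{ces_{\infty}}\ge 2^{-m}|y_{2^{m}}|=a_{m}$ for every $m$, hence $\|y\|_{ces_{\infty}}\ge\|(a_{i})\|_{l_{\infty}}$. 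Therefore $(a_{i})\mapsto\sum_{i}a_{i}2^{i}e_{2^{i}}$ is an isomorphic embedding of $l_{\infty}$ into $ces_{\infty}$. The main obstacle is essentially the single inequality $\sum_{i}2^{i}|x_{2^{i}}|\le C\|x\|_{\widetilde{l_1}}$: the soft principle that a sequence biorthogonal to an $l_{1}$-basic sequence behaves like the $c_{0}$-basis does not, on its own, give the upper bound in the full dual space (it needs the span to be complemented), so one has to verify this estimate — equivalently, the complementation — directly.
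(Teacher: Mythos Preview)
Your proof is correct and follows the same route as the paper: pass to the predual via $(\widetilde{l_1})^{*}=ces_{\infty}$, dualize Theorem~\ref{Thm2}, and invoke the Fatou property of $ces_{\infty}$ to upgrade the $c_{0}$-copy to an $l_{\infty}$-copy. The paper's argument is in fact just this one sentence, whereas you supply the concrete estimate $\sum_{i}2^{i}|x_{2^{i}}|\le 2\|x\|_{\widetilde{l_1}}$ (equivalently, complementation of the $l_{1}$-copy in $\widetilde{l_1}$) that justifies the upper $c_{0}$-bound; you are right that this step is not automatic from the mere fact that the biorthogonal sequence sits in the dual, and your dyadic-block computation is exactly what is needed to close it.
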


%%%%%%%%%%%%%%% Corollary 3
\begin{corollary}  \label{Cor3new}
The space $\widetilde{l_1}$ is isomorphic to the space $(\bigoplus_{n=0}^{\infty} l_\infty^{2^n})_{l_1}$. Therefore, the 
space $ces_{\infty}$ is isomorphic to the space $(\bigoplus_{n=0}^{\infty} l_1^{2^n})_{l_{\infty}}$ and contains a complemented 
subspace isomorphic to $L_1[0,1]$.
\end{corollary}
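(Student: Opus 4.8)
The plan is to prove the three assertions in order: the first by an explicit ``dyadic blocking'' isomorphism, the second by duality (using $(\widetilde{l_1})^*=ces_\infty$), and the third by combining the second with a Cembranos--Mendoza type embedding. For the first assertion, partition $\mathbb N$ into the consecutive blocks $\Delta_n=\{2^n,2^n+1,\dots,2^{n+1}-1\}$, $n\ge 0$, so $|\Delta_n|=2^n$ and $\min\Delta_n=2^n$. Given $x=(x^{(n)})_{n\ge 0}$ with $x^{(n)}=(x^{(n)}_1,\dots,x^{(n)}_{2^n})\in l_\infty^{2^n}$, define $Jx=(a_k)$ by $a_k=2^{-n}x^{(n)}_{k-2^n+1}$ for $k\in\Delta_n$. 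Assuming $a\ge 0$ and writing $t_n=\|a\chi_{\Delta_n}\|_\infty=2^{-n}\|x^{(n)}\|_\infty$ and $T_n=\sup_{i\ge 2^n}a_i=\max_{m\ge n}t_m$, I would establish
\[
\frac{1}{3}\sum_{n\ge 0}2^n t_n\ \le\ \|a\|_{\widetilde{l_1}}\ \le\ 2\sum_{n\ge 0}2^n t_n .
\]
The upper bound follows from $\widetilde{a_k}\le T_n$ for $k\in\Delta_n$, hence $\|a\|_{\widetilde{l_1}}\le\sum_n 2^n T_n$, together with the elementary interchange $\sum_n 2^n T_n\le\sum_n 2^n\sum_{m\ge n}t_m=\sum_m t_m(2^{m+1}-1)\le 2\sum_m 2^m t_m$. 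For the lower bound I would use $\widetilde{a_k}\ge T_{n+1}$ for every $k\in\Delta_n$, giving $\|a\|_{\widetilde{l_1}}\ge\sum_n 2^n T_{n+1}\ge\frac12\sum_{m\ge 1}2^m t_m$, and also $\|a\|_{\widetilde{l_1}}\ge\widetilde{a_1}=T_0\ge t_0$; adding twice the first inequality to the second yields $3\|a\|_{\widetilde{l_1}}\ge\sum_{n\ge 0}2^n t_n$. Since $\sum_n 2^n t_n=\sum_n\|x^{(n)}\|_\infty=\|x\|_{(\bigoplus_n l_\infty^{2^n})_{l_1}}$, the map $J$ is an isomorphic embedding; it is onto because every $a\in\widetilde{l_1}$ equals $J$ of $(2^n a\chi_{\Delta_n})_n$, which lies in the domain by the left-hand inequality. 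Thus $\widetilde{l_1}\simeq(\bigoplus_{n=0}^\infty l_\infty^{2^n})_{l_1}$.

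The second assertion follows by dualizing. Since $\widetilde{l_1}$ is order continuous, by \eqref{Alex1} we have $(\widetilde{l_1})^*=(\widetilde{l_1})'=ces_\infty$, while the standard description of the dual of an $l_1$-sum and the finite-dimensional duality $(l_\infty^{2^n})^*=l_1^{2^n}$ give
\[
\Big(\bigoplus_{n=0}^\infty l_\infty^{2^n}\Big)_{l_1}^{*}=\Big(\bigoplus_{n=0}^\infty l_1^{2^n}\Big)_{l_\infty}.
\]
As isomorphic Banach spaces have isomorphic duals, $ces_\infty\simeq(\bigoplus_{n=0}^\infty l_1^{2^n})_{l_\infty}$.

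For the last assertion it suffices, by the second, to find a complemented copy of $L_1[0,1]$ in $(\bigoplus_{n=0}^\infty l_1^{2^n})_{l_\infty}$. Here I would run the Cembranos--Mendoza argument \cite{CM08} (used there for $l_\infty(l_1)$), noting that it only involves the first $2^n$ coordinates of the $n$-th summand. Let $\{I^n_j\}_{j=1}^{2^n}$ be the dyadic subintervals of $[0,1]$ of length $2^{-n}$, let $E_n$ denote the corresponding conditional expectation, and define $T\colon L_1[0,1]\to(\bigoplus_n l_1^{2^n})_{l_\infty}$ by $(Tf)_n=\big(\int_{I^n_j}f(x)\,dx\big)_{j=1}^{2^n}$. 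Then $\|(Tf)_n\|_{l_1^{2^n}}=\|E_n f\|_{L_1}$, which is nondecreasing in $n$ and tends to $\|f\|_{L_1}$, so $T$ is an isometric embedding. A norm-one projection onto $T(L_1[0,1])$ is obtained by mapping $(h_n)=((h_{n,j})_j)_n$ first to the bounded sequence of step functions $f_n=\sum_{j=1}^{2^n}2^n h_{n,j}\chi_{I^n_j}\in L_1[0,1]$, then to its weak$^*$-limit along a fixed free ultrafilter on $\mathbb N$ inside $(L_\infty[0,1])^*$, then to the density of the countably additive part of that functional (the band projection of $(L_\infty)^*$ onto its absolutely continuous part has norm one), and finally applying $T$; when $(h_n)\in T(L_1[0,1])$ the intermediate functions are $f_n=E_n f$, an $L_1$-convergent martingale, so the composition is the identity there. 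Hence $ces_\infty$ contains a complemented copy of $L_1[0,1]$.

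The computational heart is the two-sided blocking estimate of the first paragraph, but it is routine. The genuine obstacle is the last step: $l_\infty$-sums do not in general admit bounded projections onto subspaces of this type, and the construction succeeds here only because $T$ carries the martingale (conditional expectation) structure and because $(L_\infty)^*$ splits, by a band projection, onto its absolutely continuous part $L_1[0,1]$.
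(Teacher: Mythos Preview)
Your proof is correct, and for the first assertion it is actually cleaner than the paper's. The paper establishes the isomorphism $\widetilde{l_1}\simeq(\bigoplus_{n\ge 0} l_\infty^{2^n})_{l_1}$ by routing through Theorem~\ref{Thm2}, a separate combinatorial lemma stating that $\{2^{-i}e_{2^i}\}$ is equivalent in $\widetilde{l_1}$ to the canonical $l_1$-basis; this yields constants $2$ and $1/72$. Your direct blocking estimate (the interchange $\sum_n 2^n T_n\le 2\sum_m 2^m t_m$ together with $\sum_n 2^n T_{n+1}\ge\frac12\sum_{m\ge 1}2^m t_m$) bypasses that lemma entirely and gives the sharper constants $2$ and $1/3$. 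The duality step for the second assertion is identical to the paper's.

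For the third assertion the paper's proof of Corollary~\ref{Cor3new} is different in spirit: it observes that $(\bigoplus_n l_\infty^n)_{l_1}$ sits complementably in $(\bigoplus_n l_\infty^{2^n})_{l_1}\simeq\widetilde{l_1}$ and then invokes the abstract Hagler--Stegall theorem \cite{HS73} to conclude that the dual $ces_\infty$ contains a complemented copy of $L_1[0,1]$. Your concrete martingale/ultrafilter construction is essentially the argument the paper gives separately as Theorem~\ref{Thm3} (there the limit is taken in $C[0,1]^*$ rather than $(L_\infty)^*$, but either works). So you have merged into one step what the paper splits into an abstract corollary plus a later direct proof.
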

%%%%%%%%%
\proof Let us define the linear operator $T$ from $\widetilde{l_1}$ to $(\bigoplus_{n=0}^{\infty} l_\infty^{2^n})_{l_1}$ 
as follows: if $c = (c_k)_{k=1}^{\infty} \in \widetilde{l_1}$, then $Tc = (d^{(n)})_{n = 0}^{\infty}$, where 
$d^{(n)} = (d_j^{(n)})_{j = 1}^{2^n}, d_j^{(n)}: = (j - 1 + 2^n) \cdot c_{j - 1 + 2^n}, j = 1, 2, \ldots, 2^n$.

Assuming that $c_k \geq 0, k = 1, 2, \ldots$, by Theorem \ref{Thm2}, we obtain
$$
\| c \|_{\widetilde{l_1}} = \| \sum_{k=0}^{\infty} c_k \, e_k \|_{\widetilde{l_1}} 
= \Big \| \sum_{n=0}^{\infty} \sum_{j = 1}^{2^n}  \frac{d_j^{(n)}}{j - 1 + 2^n} \, e_{2^n + j - 1} \Big \|_{\widetilde{l_1}} 
$$
$$
\leq \Big \| \sum_{n=0}^{\infty} \max_{j = 1, \ldots, 2^n} d_j^{(n)} \cdot 2^{- n} \, e_{2^{n + 1}} \Big \|_{\widetilde{l_1}} 
\leq 2 \, \sum_{n=0}^{\infty} \max_{j = 1, \ldots, 2^n} d_j^{(n)} = 2 \, \| Tc \|_{(\bigoplus_{n=0}^{\infty} l_\infty^{2^n})_{l_1}}.
$$
On the other hand, by the definition of the norm in $\widetilde{l_1}$ and Theorem \ref{Thm2}, we have
$$
\| c \|_{\widetilde{l_1}} = \Big \| \sum_{n=0}^{\infty} \sum_{j = 1}^{2^n}  \frac{d_j^{(n)}}{j - 1 + 2^n} \, e_{2^n + j - 1} \Big \|_{\widetilde{l_1}} 
\geq \frac{1}{2} \,  \Big \| \sum_{n=0}^{\infty} \max_{j = 1, \ldots, 2^n} d_j^{(n)} \cdot 2^{- n} \, e_{2^n} \Big \|_{\widetilde{l_1}} 
$$
$$
\geq \frac{1}{72} \, \sum_{n=0}^{\infty} \max_{j = 1, \ldots, 2^n} d_j^{(n)} = \frac{1}{72} \, \| Tc \|_{(\bigoplus_{n=0}^{\infty} l_\infty^{2^n})_{l_1}},
$$
and therefore $T$ is an isomorphism from $\widetilde{l_1}$ onto $(\bigoplus_{n=0}^{\infty} l_\infty^{2^n})_{l_1}$. Since
$(\widetilde{l_1})^{\prime} = ces_{\infty}$, by duality, we deduce that $ces_{\infty}$ is isomorphic to 
the space $(\bigoplus_{n=0}^{\infty} l_1^{2^n})_{l_{\infty}}$.

To get the last result of corollary, we note that $(\bigoplus_{n=0}^{\infty} l_\infty^n)_{l_1}$ is a complemented subspace of 
$(\bigoplus_{n=0}^{\infty} l_\infty^{2^n})_{l_1}$ and hence of $\widetilde{l_1}$. Thus, applying the Hagler-Stegall theorem 
(see \cite[Theorem 1]{HS73}) we conclude that the dual space, i.e., $ces_{\infty}$ contains a complemented subspace isomorphic 
to $L_1[0, 1]$.
\endproof

%%%%%%%%%%%%% Remark 1 (4old)
\begin{remark} \label{Rem4}
In \cite[p. 19]{B81}, Bourgain proved that arbitrary $l_1$-sum of finite-dimensional  Banach spaces has the Schur property (see 
also \cite[pp. 60-61]{CI90} for a simpler proof). Hence, from Corollary \ref{Cor3new} we can infer that $\widetilde{l_1}$ has the 
Schur property and thereby we get another proof of Theorem \ref{schur}.
\end{remark}
%%%%%%%%%%%%%%%%%

From Corollary \ref{Cor3new} and the fact that $l_\infty$ is a prime space (cf. \cite[Thm 5.6.5]{AK06} and \cite[Thm 2.a.7]{LT77}) it follows

%%%%%%%%%%%%%% Corollary 4 (5old)
\begin{corollary} \label{Cor5}
The spaces $ces_{\infty}$ and $l_\infty$ are not isomorphic.
\end{corollary}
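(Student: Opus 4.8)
The plan is to argue by contradiction, using the structural information about $ces_{\infty}$ packaged in Corollary \ref{Cor3new} together with Lindenstrauss's theorem that $l_\infty$ is a prime space (i.e., every infinite-dimensional complemented subspace of $l_\infty$ is isomorphic to $l_\infty$; see \cite[Thm 5.6.5]{AK06} and \cite[Thm 2.a.7]{LT77}). By Corollary \ref{Cor3new} the space $ces_{\infty}$ contains a complemented subspace $Y$ which is isomorphic to $L_1[0,1]$.

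Now I would suppose, for contradiction, that there is an isomorphism $U\colon ces_{\infty}\to l_\infty$. Transporting $Y$ by $U$, the image $U(Y)$ is an infinite-dimensional complemented subspace of $l_\infty$, so by primeness of $l_\infty$ it is isomorphic to $l_\infty$. On the other hand $U(Y)\simeq Y\simeq L_1[0,1]$, which would force $L_1[0,1]\simeq l_\infty$. This is impossible: $L_1[0,1]$ is separable while $l_\infty$ is not, and separability is an isomorphic invariant. Hence no such $U$ exists and $ces_{\infty}\not\simeq l_\infty$.

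There is essentially no technical obstacle here, since both inputs are already available: the complemented copy of $L_1[0,1]$ is exactly the content of Corollary \ref{Cor3new}, and the primeness of $l_\infty$ is classical. If one prefers not to invoke primeness, the same contradiction can be reached by noting that a complemented subspace of $l_\infty$ that is separable must be finite-dimensional (by Pe{\l}czy\'nski's theorem that complemented subspaces of $l_\infty$ are either finite-dimensional or isomorphic to $l_\infty$), which again rules out an infinite-dimensional separable complemented copy of $L_1[0,1]$; alternatively, one may use that $L_1[0,1]$ has no subspace isomorphic to $c_0$ whereas every infinite-dimensional complemented subspace of $l_\infty$ contains one. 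In any of these variants the one-line conclusion is the same.
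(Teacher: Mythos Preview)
Your proof is correct and is essentially the same as the paper's: the paper simply states that the result follows from Corollary~\ref{Cor3new} together with the primeness of $l_\infty$, and you have spelled out precisely this argument (complemented copy of $L_1[0,1]$ in $ces_\infty$, transported to $l_\infty$, contradicting primeness via separability).
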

%%%%%%%%%%%%%%

The fact that the space $ces_{\infty}$ contains a complemented subspace isomorphic to $L_1[0, 1]$ will be next a crucial tool in proving 
the existence of an isomorphism between $Ces_{\infty}$-spaces of functions and sequences. So, it is worth to give its direct proof without 
referring to the general theorem of Hagler-Stegall (\cite[Theorem 1]{HS73}), especially, because the following proof, we hope, is 
interesting in its own.

%%%%%%%%%%%%%%%%%%%%%%%% Theorem 3
\begin{theorem}\label{Thm3}
The space $ces_{\infty}$ contains a complemented subspace isomorphic to $L_1[0,1]$.
\end{theorem}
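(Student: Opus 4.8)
The plan is to give an explicit construction of a complemented copy of $L_1[0,1]$ inside $ces_\infty$, exploiting the description $ces_\infty \simeq (\bigoplus_{n=0}^\infty l_1^{2^n})_{l_\infty}$ from Corollary \ref{Cor3new} but, as promised, avoiding the Hagler--Stegall machinery. The key observation is that $L_1[0,1]$ is the closure of the union of an increasing chain of finite-dimensional subspaces each (isometrically) isomorphic to $\ell_1^{2^n}$: namely, if $h_j^{(n)} = \chi_{[(j-1)2^{-n},\,j2^{-n}]}$ for $j=1,\dots,2^n$ denotes the $n$-th level of the dyadic martingale, then $\mathrm{span}\{h_j^{(n)}\}_{j=1}^{2^n}$ with the $L_1$-norm is isometric to $\ell_1^{2^n}$, and the conditional expectation operators $E_n$ onto these subspaces are norm-one projections on $L_1[0,1]$ with $E_n f \to f$ for every $f$. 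So the natural thing is to build, inside $ces_\infty$, a subspace on which these finite blocks sit compatibly, together with a bounded projection onto their closed span.

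First I would fix, inside each block $\ell_1^{2^n}$ sitting in $ces_\infty$ (via the coordinates $k$ with $2^n \le k < 2^{n+1}$, suitably weighted as in the proof of Theorem \ref{Thm2}), the image of $\ell_1^{2^n}$ and transport the dyadic structure: define a map $J: L_1[0,1] \to ces_\infty$ by sending $f$ to the element whose $n$-th block records the averages $\int_{(j-1)2^{-n}}^{j2^{-n}} f$, i.e. the coefficients of $E_n f$ in the $h_j^{(n)}$-basis, placed in coordinates $2^n+j-1$ with the weight $2^{-n}$ (so that $\|J f\|_{ces_\infty}$ is, up to the constants of Theorem \ref{Thm2}, $\sup_n \|E_n f\|_{L_1} = \|f\|_{L_1}$, using that $\|E_n f\|_{L_1}$ is increasing in $n$). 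This shows $J$ is an isomorphic embedding. Second I would construct the projection: given $x = (x_k) \in ces_\infty$, each block $b^{(n)} = (x_k)_{2^n \le k < 2^{n+1}}$, read as an element of $\ell_1^{2^n}$ after removing the weights, determines a step function $g_n \in L_1[0,1]$ on the $n$-th dyadic level; the bounded sequence $\{g_n\}$ in $L_1[0,1]$ need not converge, so here is where the argument needs care.

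The hard part will be defining the projection $P: ces_\infty \to L_1[0,1]$ so that it is bounded and satisfies $P J = \mathrm{id}$. The obstruction is exactly that $(\bigoplus \ell_1^{2^n})_{\ell_\infty}$ is much larger than $L_1[0,1]$: a bounded sequence of step functions has no reason to converge in $L_1$, so one cannot simply take a "limit" of the $g_n$. The standard device to overcome this is to pass to a \emph{Banach limit} (or an ultrafilter limit) along $n$: since $L_1[0,1]$ is a dual space? — it is not, so instead I would use that $L_1[0,1]$ embeds isometrically as a norm-one complemented subspace in its bidual $L_1^{**}$, or, more cleanly, use a weak-$*$ compactness argument in $\mathcal{M}[0,1] = C[0,1]^*$. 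Concretely: each $g_n\,dm$ is a measure of total variation $\le \|x\|_{ces_\infty}$, so along a fixed free ultrafilter $\mathcal{U}$ on $\mathbb{N}$ the $g_n\,dm$ converge weak-$*$ in $\mathcal{M}[0,1]$ to some measure $\mu_x$; because each $g_n$ is constant on dyadic intervals of level $n$ and the dyadic structure is consistent, one checks $\mu_x$ is absolutely continuous, giving $P x := d\mu_x/dm \in L_1[0,1]$ with $\|Px\|_{L_1} \le \|x\|_{ces_\infty}$, and linearity of $P$ follows from linearity of the ultrafilter limit. Finally $P J f = f$ holds because for $x = Jf$ the step functions $g_n$ are precisely $E_n f \to f$ in $L_1$, so every ultrafilter limit equals $f$. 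This yields the bounded projection $JP$ of $ces_\infty$ onto $J(L_1[0,1]) \cong L_1[0,1]$, completing the proof; I would present the ultrafilter/weak-$*$ step as the crux and the two norm estimates via Theorem \ref{Thm2} as routine.
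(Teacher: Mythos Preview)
Your approach is essentially the paper's: embed $L_1[0,1]$ via dyadic averages into $(\bigoplus_{n=0}^\infty l_1^{2^n})_{l_\infty}$, then build the projection by taking an ultrafilter limit of the associated step functions, viewed as measures, in $\mathcal M[0,1]=C[0,1]^*$. The embedding $J$ and the verification $PJ=\mathrm{id}$ are fine and match the paper's $H$ and the martingale convergence $E_nf\to f$.

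There is, however, a genuine gap in your projection step. You assert that the weak-$*$ limit $\mu_x$ is absolutely continuous ``because each $g_n$ is constant on dyadic intervals of level $n$ and the dyadic structure is consistent.'' For an \emph{arbitrary} $x\in(\bigoplus l_1^{2^n})_{l_\infty}$ there is no consistency between blocks whatsoever: take $x$ whose $n$-th block is the first unit vector, so that $g_n=2^n\chi_{[0,2^{-n}]}$; then $g_n\,dm\to\delta_0$ weak-$*$, a purely singular measure. Thus $\mu_x$ need not lie in $L_1[0,1]$ and your formula $Px:=d\mu_x/dm$ does not define a map into $L_1$ directly from the weak-$*$ limit alone. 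The paper repairs exactly this point by composing with the Lebesgue projection $Q:\mathcal M[0,1]\to L_1[0,1]$ onto the absolutely continuous part, setting $P:x\mapsto H(Q(R_x))$; one then only needs absolute continuity of $\mu_x$ on the image of $J$ (where $g_n=E_nf\to f$ in $L_1$), which is immediate. With $Q$ inserted, your argument becomes complete and coincides with the paper's proof.
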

%%%%%%%%%%%%%%
\proof Thanks to Corollary \ref{Cor3new}, it is sufficient to prove that in the space $(\bigoplus_{n=0}^{\infty}l_1^{2^{n}})_{l_\infty}$
there is a complemented subspace isomorphic to $L_1[0,1].$
Denote the collection of dyadic intervals of $[0, 1]$ by
$$
B^n_k = [\frac{k-1}{2^{n}},\frac{k}{2^{n}}), {\rm \ where \ } k=1,\dots,2^{n} {\rm \ and\ } n = 0, 1, 2, \dots
$$
and define a sequence of operators $H_n: L^1\rightarrow l_1^{2^{n}}$  by
$$
H_n: f\mapsto \Big\{\int_{B_n^k}f(t)dt\Big\}_{k=1}^{2^n}. 
$$
Then $\|H_n\|=1$ for each $n$. Moreover, put $H: f \mapsto \oplus_{n=0}^{\infty} \, H_nf$. Then $H$ maps $L_1[0,1]$ into 
the space $(\bigoplus_{n=0}^{\infty}l_1^{2^{n}})_{l_\infty}$ with $\|H\|=1$. Moreover, let us show that $H$ is an isometry between 
the spaces $L_1[0, 1]$ and $H(L_1[0,1])$. In fact, denoting by $\{h_k\}$ the Haar basis, for a given $f \in L^1[0,1]$ and any 
$\varepsilon > 0$ one can find a function $g = \sum_{k=1}^N a_k h_k$ such that $\| g  - f \| \leq \varepsilon$. Then, for $n$ large 
enough, there holds $\|H_ng\|=\|g\|$, which, in consequence, gives $\|H_nf\|\geq \|H_ng\| - \varepsilon\geq \|f\| - 2 \varepsilon$ 
and proves our claim. 

To see that $H(L_1[0,1])$ is complemented in $(\bigoplus_{n=0}^{\infty}l_1^{2^{n}})_{l_\infty}$ a little more work is required. 
For a given $n$ we set $T_n: (\bigoplus_{n=0}^{\infty}l_1^{2^{n}})_{l_\infty}\rightarrow L_1[0,1]$ by 
$$
T_n:x\mapsto \sum_{k=1}^{2^n}2^nx_n^k\chi_{B_n^k},
$$
where $x_n=(x_n^k)_{k=1}^{2^n}$ and $x=\oplus_{n=0}^{\infty}x_n$.
Of course, $\|T_n\|=1$ for each $n$. Let $\eta$ be a free ultrafilter. Then for a given $x\in (\bigoplus_{n=0}^{\infty}l_1^{2^{n}})_{l_\infty}$ 
we define the functional $R_x$ on $C[0,1]$ by the formula
$$
R_x(g) = \lim_{\eta} \langle T_nx, g \rangle \ {\rm for} \ g\in C[0,1].
$$
Since $\|T_n\|=1, n \in \mathbb N$ we get $\|R_x\|=1$. 

Recalling that the space $C[0,1]^*$ consists of all regular Borel measures on $[0, 1]$ with finite variation, denote by $Q$ the Lebesgue  
projection which maps any such measure into its absolutely continuous part. Now, one can verify that $P: x \mapsto H(Q(R_x))$ is the 
required projection from $(\bigoplus_{n=0}^{\infty}l_1^{2^{n}})_{l_\infty}$ onto $H(L_1[0,1])$.
In fact, since  $\{T_n(Hf)\}_{n=0}^{\infty}$ is a uniformly integrable martingale associated to a function $f\in L_1[0,1]$, there holds 
$T_n(Hf) \rightarrow f$ in $L_1[0,1]$-norm. In consequence,  
$$
R_{Hf}(g) = \lim_{\eta} \langle T_nHf,g \rangle = \lim_{n\rightarrow \infty} \langle T_nHf, g\rangle = \langle f, g\rangle ~ {\rm for} \ g\in C[0,1].
$$
Therefore, $Q(R_{Hf}) = f$, which proves our claim.
\endproof

Now, we investigate the conditions under which Ces\`aro and Tandori sequence spaces have and do not have the Dunford-Pettis 
property.

Observe that under the assumption of nontriviality of indices of a function $\varphi$, that is, when $0 < p_{\varphi} \leq q_{\varphi} < 1$, 
in the case $[0, \infty)$ we have $C\Lambda_{\varphi} = L_1(\varphi (t)/t)$ with equivalent norms (see \cite[Theorem 4.4]{DS07} and 
\cite[Theorem 8]{LM15a}) and hence, by Theorem A(i), the corresponding Tandori space $\widetilde{M_{\varphi}} = L_{\infty}(\varphi)$, 
so they both have the Dunford-Pettis property. An inspection of the proof of Theorem~8 from \cite{LM15a} combined with duality
(see Theorem A(iii)) shows that for the respective sequence spaces the following result holds.

%%%%%%%%%%%%%%%%%%%%%%%%%%%%% Theorem 4
\begin{theorem}\label{Thm4a} 
Let $\varphi$ be an increasing concave function on $[0, \infty)$.

(i) If $p_{\varphi}^\infty>0$, then $C\lambda_{\varphi}=l_1(\varphi(n)/n)$ and therefore has 
the Dunford-Pettis property. 

(ii) If $q_{\varphi}^\infty < 1$, then $\widetilde{m_{\varphi}}=l_\infty(\varphi(n))$
and therefore has the Dunford-Pettis property. 
\end{theorem}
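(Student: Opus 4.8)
The plan is to prove the two norm identifications and then simply read off the Dunford-Pettis property. The latter is automatic: for any weight $w$ the map $a\mapsto(w_na_n)$ is an isometric isomorphism of $l_1(w)$ onto $l_1$ and of $l_\infty(w)$ onto $l_\infty$, so $l_1(\varphi(n)/n)$ has the Schur property, hence the Dunford-Pettis property, and $l_\infty(\varphi(n))$ has the Dunford-Pettis property. Since both are isomorphic invariants, the whole content of the theorem is to establish $C\lambda_\varphi=l_1(\varphi(n)/n)$ and $\widetilde{m_\varphi}=l_\infty(\varphi(n))$ up to equivalence of norms.

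For part (i) I would transcribe into the sequence setting the proof of the function identity $C\Lambda_\varphi=L_1(\varphi(t)/t)$ from \cite[Theorem~8]{LM15a} (cf.\ also \cite[Theorem~4.4]{DS07}), that is, establish $\|C_da\|_{\lambda_\varphi}\approx\sum_n|a_n|\,\varphi(n)/n$. The upper estimate is the triangle inequality together with a bound $\|C_de_n\|_{\lambda_\varphi}\le C\,\varphi(n)/n$: since $C_de_n=(0,\dots,0,\tfrac1n,\tfrac1{n+1},\dots)$ is nonincreasing on its support, $\|C_de_n\|_{\lambda_\varphi}=\sum_{k\ge1}\frac{\varphi(k+1)-\varphi(k)}{n+k-1}$, and splitting the sum at $k=n$ and treating the tail by Abel summation reduces matters to discrete Hardy-Copson estimates such as $\sum_{k\ge n}\varphi(k)/k^2\le C\,\varphi(n)/n$ and $\sum_{k\le n}\varphi(k)/k\le C\,\varphi(n)$, the sequence analogues of \eqref{estimate2.13}, whose validity hinges on the dilation indices of $\varphi$. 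For the lower estimate one checks that, for $a\ge0$, the averaged blocks $a_nC_de_n$ are essentially disjointly supported on dyadic scales once one passes to decreasing rearrangements, so that $\|C_da\|_{\lambda_\varphi}\ge c\sum_n a_n\|C_de_n\|_{\lambda_\varphi}\ge c'\sum_n a_n\varphi(n)/n$, using again the estimate $\|C_de_n\|_{\lambda_\varphi}\ge c\,\varphi(n)/n$.

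Part (ii) I would derive from (i) by duality, although it also has a direct proof. By Theorem~A(iii), $\widetilde{m_\varphi}=\widetilde{(\lambda_{n/\varphi})'}=(C\lambda_{n/\varphi})'$, since the K\"othe dual of the Lorentz sequence space $\lambda_{n/\varphi}$ is the Marcinkiewicz space $m_\varphi$ and the dilation $\sigma_3$ is bounded on the symmetric space $m_\varphi$. As the generating function $n/\varphi(n)$ carries the complementary index $p^\infty_{n/\varphi}=1-q_\varphi^\infty>0$, part (i) applies to it and gives $C\lambda_{n/\varphi}=l_1(1/\varphi(n))$, whence $\widetilde{m_\varphi}=(l_1(1/\varphi(n)))'=l_\infty(\varphi(n))$. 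Directly: $\widetilde x=(\widetilde{x_n})$ is already nonincreasing, so $\|x\|_{\widetilde{m_\varphi}}=\sup_n\frac{\varphi(n)}{n}\sum_{k=1}^n\widetilde{x_k}$; keeping only the last summand gives $\|x\|_{\widetilde{m_\varphi}}\ge\sup_n\varphi(n)\widetilde{x_n}=\sup_n\varphi(n)|x_n|=\|x\|_{l_\infty(\varphi(n))}$, while $\widetilde{x_k}\le\|x\|_{l_\infty(\varphi(n))}/\varphi(k)$ combined with the discrete form of \eqref{estimate2.13}, $\sum_{k=1}^n1/\varphi(k)\le C\,n/\varphi(n)$ (which holds precisely when $q_\varphi^\infty<1$), gives $\|x\|_{\widetilde{m_\varphi}}\le C\|x\|_{l_\infty(\varphi(n))}$.

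I expect the hardest step to be the lower estimate in part (i): showing that the averaged sequences $\{C_de_n\}$ span an $l_1$-like copy inside $\lambda_\varphi$ forces one to control precisely how their nested supports interact after taking decreasing rearrangements, and this, like the discrete Hardy-Copson inequalities mentioned above, is exactly where the nontriviality of the dilation indices of $\varphi$ is used. Everything else is routine: the K\"othe duality of Theorem~A(iii), the discrete version of \eqref{estimate2.13}, and the fact that a weighted $l_1$ (resp.\ $l_\infty$) is isometrically isomorphic to $l_1$ (resp.\ $l_\infty$) and so inherits the Schur (resp.\ Dunford-Pettis) property.
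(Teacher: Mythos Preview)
Your proposal is correct and matches the paper's own approach exactly: the paper gives no detailed proof but simply says that ``an inspection of the proof of Theorem~8 from \cite{LM15a} combined with duality (see Theorem~A(iii))'' yields the result, which is precisely your plan of transcribing the function-space argument for~(i) and then invoking Theorem~A(iii) for~(ii). One remark worth making is that your self-contained direct proof of~(ii) actually lets you bypass what you flag as the hardest step: once $\widetilde{m_\psi}=l_\infty(\psi(n))$ is established for $q_\psi^\infty<1$, applying Theorem~A(iii) with $X=\lambda_\varphi$ and $\psi(t)=t/\varphi(t)$ (so $q_\psi^\infty=1-p_\varphi^\infty<1$) gives $(C\lambda_\varphi)'=\widetilde{m_\psi}=l_\infty(n/\varphi(n))$, and then the Fatou property of $C\lambda_\varphi$ yields $C\lambda_\varphi=(C\lambda_\varphi)''=l_1(\varphi(n)/n)$ without ever touching the lower estimate directly.
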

%%%%%%%%%%%%%%

In the proof of a similar result related to the spaces $\widetilde{\lambda_{\varphi}}$ and $Cm_{\varphi}$
we will make use of a suitable isomorphic description of
these spaces and the well-known Bourgain's results mentioned in the Introduction (see \cite{Bo81}).

%%%%%%%%%%%%%%%%%%%%%%%%%% Theorem 5
\begin{theorem}\label{Thm5a} 
For arbitrary  increasing concave function $\varphi$ on $[0, \infty)$ the spaces $\widetilde{\lambda_{\varphi}}$ and 
$Cm_{\varphi}$ have the Dunford-Pettis property. 
\end{theorem}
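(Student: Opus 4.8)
The plan is to reduce both spaces to $l_\infty$-sums and $l_1$-sums of $L_1$-spaces (or of finite-dimensional spaces), and then invoke Bourgain's theorem that every $l_\infty$-sum of $L_1(\mu)$-spaces has the Dunford-Pettis property, together with the hereditary behaviour of the property under duality and complementation recalled in Section 3. Concretely, I would first produce an isomorphic block decomposition of $\widetilde{\lambda_{\varphi}}$. Split $\mathbb N$ into the consecutive dyadic blocks $\Delta_n = [2^n, 2^{n+1})$. On each block the Tandori norm behaves, up to the universal constants coming from concavity of $\varphi$ and the fact that $\varphi(2^{n+1})/\varphi(2^n)$ is bounded, like the norm of a suitably weighted finite-dimensional Tandori-Lorentz space of dimension $2^n$; moreover the interaction between blocks is governed by the outer sequence space, which for $\widetilde{\lambda_{\varphi}}$ is of $l_1$-type because $\lambda_\varphi$ itself is an $l_1$-type (Lorentz) space and the decreasing-majorant operation couples a block only to later blocks. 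This should give $\widetilde{\lambda_{\varphi}} \simeq \big(\bigoplus_{n=0}^\infty E_n\big)_{l_1}$ for finite-dimensional $E_n$, hence the Schur property by Bourgain's finite-dimensional $l_1$-sum result (Remark \ref{Rem4}), and a fortiori the Dunford-Pettis property.

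For $Cm_{\varphi}$ I would argue dually. By Theorem A(iii), $(Cm_{\varphi})' = \widetilde{(m_\varphi)'} = \widetilde{\lambda_\psi}$ with $\psi(t) = t/\varphi(t)$, which is again a Tandori-Lorentz sequence space, so the first part of the argument applies to it. One must, however, be careful: the Dunford-Pettis property passes from a dual to the predual but not conversely, so I would instead produce a direct isomorphic description of $Cm_\varphi$ as an $l_\infty$-sum $\big(\bigoplus_{n=0}^\infty F_n\big)_{l_\infty}$ of finite-dimensional spaces $F_n$ — the Cesàro operator averages, so on the block $\Delta_n$ the contribution of $f$ is controlled by partial sums, and the $\sup$ in the Marcinkiewicz norm turns the outer coupling into an $l_\infty$-type norm. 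Every $l_\infty$-sum of finite-dimensional (hence $L_1$-) spaces has the Dunford-Pettis property by Bourgain's theorem, which gives the conclusion for $Cm_\varphi$. Alternatively, once $\widetilde{\lambda_\psi} \simeq (\bigoplus F_n')_{l_1}$ is established, its dual is $(\bigoplus (F_n')^*)_{l_\infty}$, an $l_\infty$-sum of finite-dimensional spaces, so it has the Dunford-Pettis property, and therefore so does its predual $Cm_\varphi$; but I would prefer the direct route to avoid predual subtleties.

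The main obstacle, I expect, is making the block decomposition genuinely isomorphic with constants uniform in $n$. For a general increasing concave $\varphi$ the ratios $\varphi(2^{n+1})/\varphi(2^n)$ are bounded by $2$, so that is fine, but within a block the weights $\varphi(k+1)-\varphi(k)$ (for the Lorentz case) or $\varphi(k)/k$ (for the Marcinkiewicz case) can vary, and one must check that replacing them by a single representative value per block only costs a universal factor; this uses concavity plus the elementary doubling $\varphi(2t)\le 2\varphi(t)$, together with the rearrangement structure of the Lorentz and Marcinkiewicz norms. A secondary technical point is handling the decreasing majorant / Cesàro operator across block boundaries: $\widetilde{a}$ on block $\Delta_n$ sees the suprema of all later blocks, and $C_d a$ on $\Delta_n$ sees the full partial sum up to $2^n$, so the "diagonal" approximation is not exact and one has to absorb the off-diagonal part, again at the cost of a constant. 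None of this is deep, but the bookkeeping is where the real work lies; once it is done, Bourgain's results (\cite{Bo81}) and the permanence properties of the Dunford-Pettis and Schur properties close the argument immediately.
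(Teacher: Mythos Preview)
Your overall strategy is exactly the paper's: decompose $\widetilde{\lambda_\varphi}$ isomorphically as an $l_1$-sum of finite-dimensional $l_\infty$-blocks, apply Bourgain's results from \cite{Bo81}, and then pass to $Cm_\varphi$ by K\"othe duality (Theorem~A(iii)) together with the Fatou property. The duality half of your sketch is fine and coincides with the paper's argument.

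The gap is in your choice of blocks. Fixed dyadic blocks $\Delta_n=[2^n,2^{n+1})$ do \emph{not} yield an isomorphism for an arbitrary increasing concave $\varphi$. Take $\varphi(t)=\log(1+t)$ and $x=\sum_{k=1}^K e_{2^k}$. Then $\widetilde{x}_n=1$ for $n\le 2^K$ and $0$ otherwise, so $\|x\|_{\widetilde{\lambda_\varphi}}=\varphi(2^K+1)-\varphi(1)\asymp K$. In your proposed block norm $\sum_n \varphi(2^n)\max_{i\in\Delta_n}|x_i|$ each of the $K$ blocks contributes $\varphi(2^k)\asymp k$, giving $\asymp K^2$. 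So the two norms are not equivalent, and the ``off-diagonal absorption'' you flag as routine bookkeeping in fact fails: the inequality $\sum_k w_k\sup_{j\ge k}c_j\le C\sum_k w_k c_k$ requires the block weights $w_k$ to grow geometrically, which concavity alone (your observation $\varphi(2t)\le 2\varphi(t)$) does not provide.

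The paper fixes this by choosing the blocks adaptively from $\varphi$: set $n_1=1$ and pick $n_{k+1}$ so that $\varphi(n_{k+1})-\varphi(n_k)\in[2^{k-1},2^k]$. Then the block weights are forced to be $\asymp 2^k$, the absorption lemma (\cite[Proposition~2.1]{GHS96}) applies, and one gets $\widetilde{\lambda_\varphi}\simeq(\bigoplus_k l_\infty^{m_k})_{l_1}$ with $m_k=n_{k+1}-n_k$. You should also dispose of the degenerate cases $\lim_{t\to\infty}\varphi(t)<\infty$ (then $\lambda_\varphi=l_\infty$) and $\lim_{t\to\infty}\varphi(t)/t>0$ (then $Cm_\varphi=l_1$) before running the main argument.
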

%%%%%%%%%%%%%%%%%%%%%%

\begin{proof}
 At first, in the case when $\lim_{t\to\infty}\varphi(t)<\infty$ we have ${\lambda_{\varphi}}=l_\infty$, whence 
 $\widetilde{\lambda_{\varphi}}=l_\infty,$ and the result follows. So, let $\lim_{t\to\infty}\varphi(t)=\infty$.  
Moreover, the function $\varphi(t)$ is strictly increasing and, without loss of generality, we can assume that  $\varphi(1)=1.$
Let us define the increasing sequence $\{n_k\}_{k=1}^\infty$, where $n_1=1$,
as follows
$$
n_{k+1}:=\sup\{i>n_k:\,\varphi(i)-\varphi(n_k)\le 2^k\},\;\;k=1,2,\dots$$
Then, since $\varphi(n_{k+1}+1)-\varphi(n_k)>2^k$ and, by subadditivity of $\varphi,$
$$
\varphi(n_{k+1}+1)-\varphi(n_{k+1})\le \varphi(1)=1,$$ we have
\begin{equation} \label{3a.1}
2^{k-1}\le \varphi(n_{k+1})-\varphi(n_k)\le 2^k,\;\;k=1,2,\dots
\end{equation}
Therefore, by \eqref{3a.1} and \cite[Proposition 2.1]{GHS96} (see also \cite[Lemma 3.2]{GP03}), 
for every $x=(x_n)_{n=1}^\infty \in \widetilde{\lambda_{\varphi}}$ we have

\begin{eqnarray*}
\| x \|_{\widetilde{\lambda_{\varphi}}}
&=&
\sum_{n=1}^\infty  \widetilde{x_n} (\varphi(n+1)-\varphi(n))=
\sum_{k=1}^\infty \sum_{n=n_k}^{n_{k+1}-1} \widetilde{x_n} (\varphi(n+1)-\varphi(n))\\
&\le& \sum_{k=1}^\infty \widetilde{x_{n_k}} (\varphi(n_{k+1})-\varphi(n_k))=
\sum_{k=1}^\infty 2^k\sup_{j\ge k}\max_{n_j\le i\le n_{j+1}}|x_{i}|\\
&\le& C\sum_{k=1}^\infty 2^k\max_{n_k\le i\le n_{k+1}}|x_{i}|=C\| x \|_{\oplus}
\end{eqnarray*}
with some constant $C>0,$ where 
$$
 \| x \|_{\oplus}: = \sum_{k=1}^\infty 2^k\max_{n_k\le i\le n_{k+1}}|x_{i}|.
$$
Conversely, again, by \eqref{3a.1},
\begin{eqnarray*}
\| x \|_{\oplus} 
&=&
2\max_{1\le i\le n_{2}}|x_{i}|+\sum_{k=2}^\infty 2^k\max_{n_k\le i\le n_{k+1}}|x_{i}|\le
2\|x\|_{l_\infty}+4\sum_{k=2}^\infty \widetilde{x_{n_k}} (\varphi(n_{k})-\varphi(n_{k-1}))\\
&\le& 2\|x\|_{l_\infty}+4\sum_{k=2}^\infty \sum_{n=n_{k-1}}^{n_{k}-1} \widetilde{x_n} (\varphi(n+1)-\varphi(n)) \le
\left(\frac{2}{\varphi(2)-1}+4\right)\|x\|_{\widetilde{\lambda_{\varphi}}}.
\end{eqnarray*}
These inequalities show that $\widetilde{\lambda_{\varphi}}$ is isomorphic to the space
$(\bigoplus_{k \in \mathbb N} l_{\infty}^{m_k})_{l_1},$ where
$m_k=n_{k+1}-n_k,$ $k \in \mathbb N.$ 
Hence, applying \cite[Corollary 7]{Bo81}, we obtain that the space
$\widetilde{\lambda_{\varphi}}$ has the Dunford-Pettis property.

Regarding $Cm_{\varphi}$ we note, firstly, that in the case when
$\lim_{t\to\infty}\varphi(t)/t>0$ the latter space coincides with $l_1$
and hence has the Dunford-Pettis property. If $\lim_{t\to\infty}\varphi(t)/t=0$,
then from Theorem A(iii) and the first part of the proof it follows that
$$
(Cm_{\varphi})^{\prime} = \widetilde{\lambda_{\psi}} \, \simeq \, (\bigoplus_{k \in \mathbb N} l_{\infty}^{m_k})_{l_1},
$$
where $\psi(t) = t/{\varphi(t)}$.
Combining this with the fact that $Cm_{\varphi}$ has the Fatou property, we infer
$$
Cm_{\varphi} \, \simeq  \, [ (\bigoplus_{k \in \mathbb N} l_{\infty}^{m_k})_{l_1}]^{\prime} = 
(\bigoplus_{k \in \mathbb N} l_1^{m_k})_{l_{\infty}},
$$
Hence, from Bourgain's result \cite[Theorem 1]{Bo81} it follows 
that $Cm_{\varphi}$ has the Dunford-Pettis property. 
\end{proof}

%%%%%%%%%%%%%%%%%% Corollary 5
\begin{corollary} \label{Cor5a}
For any increasing concave function $\varphi$ on $[0, \infty)$ the space $C(m_{\varphi}^0)$ has the Dunford-Pettis property. 
\end{corollary}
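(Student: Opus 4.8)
The plan is to pass to the dual space and reduce to Theorem~\ref{Thm5a}. By its very definition $m_\varphi^0$, the part of $m_\varphi$ consisting of sequences with order continuous norm, is an order continuous Banach ideal sequence space with $\supp m_\varphi^0 = \mathbb N$ (it contains every $e_n$). I would first note that the Ces\`aro space $C(m_\varphi^0)$ is then order continuous as well: this is the sequence analogue of \cite[Lemma 1]{LM15b}, and it also follows directly, since if $x_j \downarrow 0$ in $C(m_\varphi^0)$ then $C|x_j| \downarrow 0$ pointwise and $C|x_j| \le C|x_1| \in m_\varphi^0$, whence $\|x_j\|_{C(m_\varphi^0)} = \|C|x_j|\|_{m_\varphi} \to 0$. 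Consequently $(C(m_\varphi^0))^{*} = (C(m_\varphi^0))^{\prime}$.

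Next I would identify this K\"othe dual. Since $m_\varphi$ has the Fatou property and $m_\varphi^0$ is an order dense ideal in it, we have $(m_\varphi^0)^{\prime} = (m_\varphi)^{\prime} = \lambda_\psi$, where $\psi(t) = t/\varphi(t)$. As $\lambda_\psi$ is a symmetric sequence space, the dilation operator $\sigma_3$ is bounded on it, so Theorem~A(iii), applied with $X = m_\varphi^0$, gives
$$
(C(m_\varphi^0))^{\prime} = \widetilde{(m_\varphi^0)^{\prime}} = \widetilde{\lambda_\psi}.
$$
The function $\psi$ is increasing and concave on $[0,\infty)$, so Theorem~\ref{Thm5a} shows that the Tandori-Lorentz space $\widetilde{\lambda_\psi}$ has the Dunford-Pettis property. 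Hence $(C(m_\varphi^0))^{*}$ has the Dunford-Pettis property, and therefore so does $C(m_\varphi^0)$, because a Banach space whose dual has the Dunford-Pettis property has it too. The degenerate cases (for instance $C(m_\varphi^0) = \{0\}$ when $\lim_{t\to\infty}\varphi(t)/t > 0$, or $\widetilde{\lambda_\psi} = l_\infty$) cause no trouble, since Theorem~\ref{Thm5a} is stated for an arbitrary increasing concave function.

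There is no genuine obstacle here --- this is a corollary of Theorem~\ref{Thm5a} together with the duality Theorem~A(iii) --- but the step deserving a moment's care is the interplay with the separable part $m_\varphi^0$: one must be sure that $C(m_\varphi^0)$ is order continuous (so that its Banach dual coincides with its K\"othe dual) and that passing from $m_\varphi$ to $m_\varphi^0$ does not change the K\"othe dual, both of which are routine facts about order continuous subspaces of Banach ideal spaces with the Fatou property.
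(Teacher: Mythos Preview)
Your proof is correct and follows essentially the same route as the paper: use order continuity of $C(m_\varphi^0)$ to identify its Banach dual with its K\"othe dual, invoke Theorem~A(iii) to obtain $(C(m_\varphi^0))' = \widetilde{\lambda_\psi}$ with $\psi(t)=t/\varphi(t)$, and then apply Theorem~\ref{Thm5a}. You have simply spelled out the intermediate steps (order continuity of $C(m_\varphi^0)$, the identity $(m_\varphi^0)'=(m_\varphi)'=\lambda_\psi$) that the paper leaves implicit; the only cosmetic point is that $\psi$ need not be literally concave but is quasi-concave and hence equivalent to a concave function, which suffices for Theorem~\ref{Thm5a}.
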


\begin{proof}
Since the space $C(m_{\varphi}^0)$ is order continuous, by Theorem A(iii), we have
$$
[C(m_{\varphi}^0)]^*=[C(m_{\varphi}^0)]^{\prime}= \widetilde{\lambda_{\psi}},$$
where $\psi(t) = t/{\varphi(t)}$. Now desired result follows from the preceding theorem. 
\end{proof}

Now, we show that in the case of reflexive spaces the situation is completely different.

%%%%%%%%%%%%%%%%%%%% Theorem 4
\begin{theorem}\label{Thm4} 
Let $X$ be a reflexive symmetric sequence space.

(i) If the operator $C_d$ is bounded on $X$, then $CX$ does not have the Dunford-Pettis property. 

(ii) If the operator $C_d$ is bounded on $X^{\prime}$, then $\widetilde{X}$ does not have the Dunford-Pettis property. 
\end{theorem}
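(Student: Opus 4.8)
The plan is to exhibit, in each case, an explicit weakly compact operator that fails to be completely continuous, or equivalently a pair of weakly null sequences in the space and its dual whose pairing does not tend to zero. The natural candidate comes from the unit vector basis. In part (i), since $X$ is reflexive and symmetric, its unit vector basis $\{e_n\}$ is weakly null in $X$, hence also (after normalization) weakly null in $CX$ because $X \overset{A}{\hookrightarrow} CX$ when $C_d$ is bounded on $X$. By reflexivity of $X$ we have $X^* = X'$, and one checks that $X$ sits complementably, or at least isomorphically, inside $CX$ in a way compatible with duality; the dual basis $\{e_n^*\}$ is weakly null in $X' = X^*$. The obstruction to Dunford-Pettis will be that the coordinate functionals, viewed inside $(CX)^* $, pair with the $e_n$'s to give $1$ rather than $0$. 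The cleanest way to package this is: show $CX$ contains a complemented reflexive infinite-dimensional subspace (e.g. an isomorphic copy of $X$ itself, or of $\ell_2$ via a block basis), and invoke the standard fact recalled in the excerpt that infinite-dimensional reflexive spaces fail the Dunford-Pettis property, together with the hereditary-for-complemented-subspaces property of DPP.

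Concretely, for part (i): because $C_d$ is bounded on $X$ we have the inclusion $X \hookrightarrow CX$ with $\|f\|_{CX} \le A\|f\|_X$; I would also produce a bounded projection of $CX$ onto (a copy of) a reflexive infinite-dimensional space. One way: the map $f \mapsto (f_{2^k})_{k}$ or a similar "lacunary extraction", combined with the averaging structure of $C_d$, should reveal a block subspace of $CX$ isomorphic to a reflexive symmetric space — in the simplest reductions one gets $\ell_2$ or $\ell_p$ — which is complemented by an averaging projection. Since a complemented subspace of a space with DPP has DPP, and since reflexive infinite-dimensional spaces never have DPP, $CX$ cannot have DPP. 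For part (ii), I would argue dually: if $C_d$ is bounded on $X'$, then by Theorem A(iii) we have $(CX')' = \widetilde{X}$, and moreover $\widetilde{X} = (CX')^*$ up to identifying with the Köthe dual once order continuity is in place. The key point is that $\widetilde X$ contains a complemented copy of a reflexive space: indeed the embedding $\widetilde{X} \overset{1}{\hookrightarrow} X$ together with the reflexivity of $X$ and the structure of the majorant operation should let one locate, via a sequence of "well-separated blocks" on which the decreasing-majorant operator acts like the identity, a complemented subspace isomorphic to an infinite-dimensional reflexive space. Alternatively, and perhaps more cleanly, apply part (i) to $X'$ (which is also reflexive, with $C_d$ bounded on it) to conclude $CX'$ fails DPP; then since $\widetilde X = (CX')'= (CX')^*$ and a space whose dual has DPP also has DPP — but here we want the converse direction, so instead use that $CX'$ is a complemented subspace of $(CX')^{**} = (\widetilde X)^*$ when $CX'$ is order continuous, forcing $(\widetilde X)^*$, and hence also $\widetilde X$ is handled by a direct complemented-subspace argument inside $\widetilde X$ itself.

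The main obstacle I anticipate is the identification of a \emph{complemented} reflexive subspace rather than merely an isomorphic copy: a weakly null normalized sequence always exists (from $\{e_n\}$), but to defeat DPP one needs the pairing with a weakly null dual sequence to stay bounded away from zero, and the transparent way to guarantee that is complementation via an averaging or restriction projection. For $CX$ the Cesàro averaging tends to "smear" coordinates, so the projection must be chosen to respect the block structure — likely taking dyadic blocks $\{2^k, \dots, 2^{k+1}-1\}$ and the operator that replaces each block by its average, which is bounded on $CX$ precisely because $C_d$ is; one then shows the range is a reflexive sequence space (a weighted $\ell_p$-type space built from $X$) and the projection is bounded. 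For $\widetilde X$ the analogous difficulty is that the majorant operator is highly non-local; here the trick is to use blocks that are far apart in scale so that the decreasing majorant "decouples" across blocks, making the block-diagonal projection bounded. Once the complemented reflexive subspace is in hand in each case, the conclusion is immediate from the two cited facts about DPP. I would present (i) in full and then deduce (ii) by duality via Theorem A(iii), keeping the order-continuity bookkeeping explicit so that $X^* = X'$ and $(CX')^* = \widetilde X$ are legitimate.
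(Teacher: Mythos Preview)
Your opening sentence names exactly the right object: the operator $C_d:CX\to X$ is weakly compact because $X$ is reflexive, and the whole problem is to show it is not completely continuous. The paper does precisely this, and nothing more. It takes $x_n=\varphi_{X'}(n)\,e_n$, shows $x_n\to 0$ weakly in $CX$ using $(CX)^*=\widetilde{X'}$ (Theorem~A(iii)) together with the inclusion $X'\subset m_{\varphi_{X'}}^0$ forced by reflexivity, and then computes directly that $\|C_d x_n\|_X\ge \tfrac12$. That is the entire argument for~(i).

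You abandon this line after one sentence and spend the rest of the proposal trying to locate a \emph{complemented} infinite-dimensional reflexive subspace of $CX$. That detour is both harder and unexecuted: you never actually produce the projection, and the dyadic-block averaging you sketch does not obviously land in a reflexive space (indeed for $X=\ell_p$ the block-averaging range is more naturally a weighted $\ell_1$- or $\ell_\infty$-type object, not $\ell_p$). None of this machinery is needed once you observe that $C_d$ itself is the weakly compact, non-Dunford--Pettis operator you were looking for.

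For (ii) your final suggestion \emph{is} the paper's proof, but you have talked yourself out of it. You write that ``a space whose dual has DPP also has DPP --- but here we want the converse direction.'' No: that implication, in contrapositive, says that if $Y$ fails DPP then $Y^*$ fails DPP. Apply it with $Y=CX'$: part~(i) gives that $CX'$ fails DPP, and since $CX'$ is order continuous one has $(CX')^*=(CX')'=\widetilde{X}$, so $\widetilde{X}$ fails DPP. There is no missing direction and no need for the complemented-subspace workaround you propose afterward.
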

%%%%%%%%%%%%%%%%%%%%
\proof (i) Since $X$ is reflexive it follows that $C_d: CX\rightarrow X$ is a weakly compact operator. Therefore, it is sufficient 
to show that $C_d$ is not a Dunford-Pettis operator. 

Consider the sequence $x_n = \varphi_{X'}(n) \,e_n$, where $\{e_n\}$ is the canonical basis in $X$ and $\varphi_{X'}$ is 
the fundamental function of $X'$. Let us show that $x_n\rightarrow 0$ weakly in $CX.$ 

Since the space $X$ has an order continuous norm, then $CX$ has also order continuous norm and, by Theorem A(iii),  
we obtain $(CX)^* = (CX)' = \widetilde{X'}$. Therefore, by the definition of $\widetilde{X'}$, it is sufficient to prove that
\begin{equation} \label{3.10}
\langle y, x_{n}\rangle = \varphi_{X'}(n) \, y_{n}\to 0\;\;\mbox{as}\;\;n\to\infty
\end{equation}
for each non-increasing positive sequence $y=(y_n) \in X'$. Observe that $X'\subset m_{\varphi_{X'}}$, where $m_{\varphi_{X'}}$ 
is the Marcinkiewicz space with the fundamental function $\varphi_{X'}$. Moreover, by reflexivity of $X$, $X' = (X')^0,$ and thus  
$$
X'\subset m_{\varphi_{X'}}^0 = \{(z_n):\lim_{n\rightarrow \infty}\varphi_{X'}(n) \,z^*_n=0\}.
$$
Clearly, this embedding implies \eqref{3.10} and hence $x_n \rightarrow 0$ weakly in $CX$.

On the other hand, $C_de_n=\sum_{k=n}^{\infty}e_k/k$ for every $n \in \mathbb N$. Since $X$ is a symmetric space, we have
\begin{eqnarray*}
\|C_de_n\|_X 
&=&
\Big\|\sum_{k=1}^{\infty}\frac{e_k}{n+k-1}\Big\|_X \geq \Big\|\sum_{k=1}^n \frac{e_k}{n+k-1}\Big\|_X \\
&\geq &
\frac{1}{2n} \, \| \sum_{k=1}^n e_k \|_X = \frac{\varphi_X(n)}{2n}=\frac{1}{2 \, \varphi_{X'}(n)}.
\end{eqnarray*}
Hence, $\|C_dx_n\|_X\ge 1/2$ for all $n,$ and the proof is completed.

(ii) Since $X^{\prime}$ is reflexive and $C_d$ is bounded on $X^{\prime}$, from Theorem \ref{Thm4}(i) it follows that
$C(X^{\prime})$ does not have the Dunford-Pettis property. By duality, Theorem A(iii) and the fact that $C (X^{\prime})$ 
is order continuous, we obtain $\widetilde{X} = (CX^{\prime})^{\prime} =  (CX^{\prime})^*$. Thus, $\widetilde{X}$ is the dual 
space to a space without the Dunford-Pettis property. So, $\widetilde{X}$ also fails to have it.
\endproof

%%%%%%%%%%%%%%%%%%%%%%%%%%%%%%%%%%%%%%%%%%%%%%%% Section 4
\section{On the Dunford-Pettis property of Ces\`aro and Tandori function spaces}
%%%%%%%%%%%%%%%%%%%%%%%%%%%%%%%%%%%%%%

As was mentioned above, under the assumption of nontriviality of indices of a function $\varphi,$ 
the spaces $C\Lambda_{\varphi}$ and $\widetilde{M_{\varphi}}$ are some weighted
$L_1$- and $L_\infty$-spaces, respectively, and so they both have the Dunford-Pettis property
(see \cite[Theorem 4.4]{DS07} and \cite[Theorem 8]{LM15a}).
Similarly, as in Theorem \ref{Thm4a}, we are able to prove the latter property also
for their counterparts, $\widetilde{\Lambda_{\varphi}}$ and $CM_{\varphi}$.

%%%%%%%%%%%%%%%%%%%%% Theorem 5 (now 7)
\begin{theorem} \label{Thm5}
Let $\varphi$ be an increasing concave function on $[0, \infty)$.

(i) The space $\widetilde{\Lambda_{\varphi}}[0, \infty)$ is isomorphic to the space $(\bigoplus_{n \in \mathbb N} L_{\infty}[0, 1])_{l_1}$ 
and has the Dunford-Pettis property.

(ii) If $q_{\varphi} < 1$, then the space $CM_{\varphi}[0, \infty)$ is isomorphic to the space $(\bigoplus_{n \in \mathbb N} L_1[0, 1])_{l_\infty}$
 and has the Dunford-Pettis property.
\end{theorem}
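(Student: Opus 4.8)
The plan is to imitate, in the continuous setting, the decomposition argument already carried out for sequence spaces in Theorem \ref{Thm5a}, so that the main work reduces to identifying the right block structure and then invoking Bourgain's theorems (\cite[Corollary 7]{Bo81} and \cite[Theorem 1]{Bo81}) exactly as before. For part (i), I would first dispose of the degenerate case $\lim_{t\to\infty}\varphi(t)<\infty$, where $\Lambda_\varphi[0,\infty)=L_\infty[0,\infty)$ and hence $\widetilde{\Lambda_\varphi}=L_\infty$, which is one of the $(\bigoplus L_\infty)_{l_1}$-type spaces (or handled directly). Assuming $\varphi(\infty)=\infty$ and normalizing $\varphi(1)=1$, I would define an increasing sequence $0=t_0<t_1<t_2<\dots$ by $t_{k+1}=\sup\{t>t_k:\varphi(t)-\varphi(t_k)\le 2^k\}$, so that, using continuity and subadditivity of $\varphi$, one gets $2^{k-1}\le\varphi(t_{k+1})-\varphi(t_k)\le 2^k$. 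Then for $f\in\widetilde{\Lambda_\varphi}$, writing $\|f\|_{\widetilde{\Lambda_\varphi}}=\int_0^\infty\widetilde f(s)\,d\varphi(s)$ and splitting the integral over the intervals $[t_k,t_{k+1})$, the monotonicity of $\widetilde f$ together with the estimates on $\varphi(t_{k+1})-\varphi(t_k)$ yields, exactly as in the proof of Theorem \ref{Thm5a}, two-sided equivalence between $\|f\|_{\widetilde{\Lambda_\varphi}}$ and $\sum_{k}2^k\,\mathrm{ess\,sup}_{t\in[t_k,t_{k+1})}|f(t)|=\sum_k 2^k\|f\chi_{[t_k,t_{k+1})}\|_{L_\infty}$. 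This exhibits $\widetilde{\Lambda_\varphi}[0,\infty)$ as isomorphic to $(\bigoplus_{k}L_\infty[t_k,t_{k+1}))_{l_1}\cong(\bigoplus_{n\in\mathbb N}L_\infty[0,1])_{l_1}$; Bourgain's \cite[Corollary 7]{Bo81} then gives the Dunford-Pettis property. The one subtlety compared with the sequence case is that here each block carries the continuous $L_\infty[0,1]$ rather than a finite-dimensional $l_\infty^{m_k}$, but Bourgain's result covers $l_1$-sums of $L_1$-spaces and, dually, is stated for $\bigoplus L_\infty$; if one prefers to stay purely with $(\bigoplus L_\infty[0,1])_{l_1}$ one notes $L_\infty[0,1]$ is itself an $L_\infty(\mu)$, or equivalently a $C(K)$-space, so the sum is of the form to which Bourgain applies.

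For part (ii), under the hypothesis $q_\varphi<1$ I would combine Theorem A(i) with the first part. Indeed, when $\lim_{t\to\infty}\varphi(t)/t>0$ the Marcinkiewicz space $M_\varphi[0,\infty)$ is just $L_\infty+L_\infty$-trivial — more precisely $CM_\varphi$ collapses to a weighted $L_1$ and the statement is immediate — so assume $\lim_{t\to\infty}\varphi(t)/t=0$. Setting $\psi(t)=t/\varphi(t)$, we have $(M_\varphi)'=\Lambda_\psi$, and since $C$ and $\sigma_\tau$ are bounded on the symmetric space $M_\varphi$ when $q_\varphi<1$ (the boundedness of $C$ on $M_\varphi$ being equivalent to $q_\varphi<1$ via estimate (\ref{estimate2.13})), Theorem A(i) gives $(CM_\varphi)'=\widetilde{(M_\varphi)'}=\widetilde{\Lambda_\psi}$. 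By part (i), $\widetilde{\Lambda_\psi}\simeq(\bigoplus_{n}L_\infty[0,1])_{l_1}$. Since $M_\varphi$ has the Fatou property, so does $CM_\varphi$ (by \cite[Theorem 1(d)]{LM15a}), hence $CM_\varphi=(CM_\varphi)''=[\,\widetilde{\Lambda_\psi}\,]'=[(\bigoplus_n L_\infty[0,1])_{l_1}]'=(\bigoplus_n L_1[0,1])_{l_\infty}$ (the Köthe/Banach dual of an $l_1$-sum being the $l_\infty$-sum of the Köthe duals, and $(L_\infty[0,1])'=L_1[0,1]$). Then Bourgain's \cite[Theorem 1]{Bo81}, asserting that every $l_\infty$-sum of $L_1$-spaces has the Dunford-Pettis property, finishes the proof.

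The step I expect to be the main obstacle is the careful verification of the two-sided norm equivalence in part (i) — in particular making sure the ``upper'' direction is handled correctly at the first block (where $\varphi(t_1)-\varphi(t_0)=\varphi(1)-0$ need not dominate $2$) and the ``lower'' direction uses $\widetilde{x_{t_k}}\le\widetilde{x}$ on the whole tail rather than just on $[t_k,t_{k+1})$; this is exactly the role played by the auxiliary norm $\|\cdot\|_\oplus$ and inequalities (\ref{3a.1}) in Theorem \ref{Thm5a}, and I would reproduce that bookkeeping almost verbatim, replacing sums by integrals and $\max_{n_k\le i\le n_{k+1}}$ by $\mathrm{ess\,sup}_{[t_k,t_{k+1})}$. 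A secondary point to check is that the reduction in part (ii) genuinely requires only $q_\varphi<1$ (and not also $p_\varphi>0$): the boundedness of $C$ on $M_\varphi$, the identity $(M_\varphi)'=\Lambda_\psi$, and the hypotheses of Theorem A(i) all hold under $q_\varphi<1$ alone, so no lower-index assumption is needed, which is consistent with the statement.
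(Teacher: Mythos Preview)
Your overall strategy---block decomposition into an $l_1$-sum of $L_\infty$-pieces for (i), then dualize via Theorem~A(i) for (ii), then quote Bourgain---is exactly what the paper does. Part (ii) of your proposal is essentially correct and matches the paper's argument (your case split on $\lim_{t\to\infty}\varphi(t)/t$ is harmless but vacuous: $q_\varphi<1$ already forces this limit to be zero).

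However, there is a genuine gap in your construction for part (i). You set $t_0=0$ and take the first block to be $[0,t_1)$, claiming equivalence of $\|f\|_{\widetilde{\Lambda_\varphi}}$ with $\sum_k 2^k\,\esssup_{[t_k,t_{k+1})}|f|$. But functions in $\widetilde{\Lambda_\varphi}$ need not be essentially bounded near $0$: for instance, with $\varphi(t)=\sqrt{t}$ the function $f(t)=t^{-1/4}\chi_{(0,1]}$ satisfies $\widetilde f=f$ and $\int_0^1 t^{-1/4}\,d\sqrt{t}<\infty$, so $f\in\widetilde{\Lambda_\varphi}$, yet $\esssup_{[0,t_1)}|f|=\infty$ and your $\|f\|_\oplus$ is infinite. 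Thus the claimed two-sided equivalence fails and the map $f\mapsto(f\chi_{[t_k,t_{k+1})})_k$ does not land in $(\bigoplus L_\infty)_{l_1}$. This is precisely \emph{not} the obstacle you flagged (the size of $\varphi(t_1)-\varphi(t_0)$); the real issue is that the sequence analogy breaks down because $[0,\infty)$ has a boundary at $0$ where $\widetilde f$ may blow up.

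The paper avoids this by proving a separate Proposition (for general $\widetilde{L_p(w)}$ with $w$ locally integrable, $\int_0^\infty w=\infty$) that uses a \emph{bi-infinite} sequence $\{t_k\}_{k\in\mathbb Z}$ with $t_0=1$, $t_k\to 0$ as $k\to-\infty$, $t_k\to\infty$ as $k\to+\infty$, and $\int_{t_k}^{t_{k+1}}w=a^k$ for some fixed $a>1$; then every block $[t_k,t_{k+1}]$ stays away from $0$ and the $L_\infty$-norms are finite. Applying this with $p=1$ and $w=\varphi'$ (so $\widetilde{\Lambda_\varphi}=\widetilde{L_1(\varphi')}$) gives the isomorphism with $(\bigoplus_{n\in\mathbb Z}L_\infty[0,1])_{l_1}\simeq(\bigoplus_{n\in\mathbb N}L_\infty[0,1])_{l_1}$. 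Your argument is easily repaired along these lines: just run your recursion in both directions from $t_0=1$ rather than starting at $0$.
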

%%%%%%%%%%%%%%%%%%%

Firstly, we prove the following auxiliary result.

%%%%%%%%%%%%%%%%%%%%% Proposition 2 (6 old)
\begin{proposition} \label{Pro6}
Let  $w$ be a locally integrable function on $[0, \infty), w(t) > 0$ a.e.,  such that $\int_0^{\infty} w(t)\, dt = \infty$. For $1 \leq p < \infty$ 
we consider the weighted $L_{p}$-space with the norm $\| f\|_{L_p(w)}: = (\int_0^{\infty} |f (t)|^p w(t) \, dt)^{1/p}$. Then the space
$\widetilde{L_p(w)}$ is isomorphic to the space $(\bigoplus_{n \in \mathbb Z} L_{\infty}[0, 1])_{l_p}$ and the constant of 
isomorphism depends only on $w$.
\end{proposition}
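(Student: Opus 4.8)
The plan is to exploit the fact that the ``tilde'' operation $f \mapsto \widetilde{f}$ only depends on the \emph{decreasing rearrangement-like majorant} of $f$, so that modulo a constant the norm $\|f\|_{\widetilde{L_p(w)}} = \|\widetilde f\,\|_{L_p(w)}$ can be computed on a convenient partition of $[0,\infty)$. First I would choose points $0 = t_0 < t_1 < t_2 < \dots$ and (since $\int_0^\infty w = \infty$) also a doubly-infinite refinement running to $0$, i.e. points $(t_k)_{k \in \mathbb Z}$ with $t_k \to 0$ as $k \to -\infty$ and $t_k \to \infty$ as $k \to +\infty$, chosen so that the $w$-measure of each block $[t_{k-1}, t_k)$ is comparable to a fixed constant, say $W_k := \int_{t_{k-1}}^{t_k} w(t)\,dt$ satisfies $1 \le W_k \le 2$ (this is exactly where local integrability and $\int_0^\infty w = \infty$ are used; one may need a boundary adjustment if $\int_0^1 w < \infty$, handled by merging the finitely many ``left'' blocks into one, or rather $\int_0^\varepsilon w$ may be finite, in which case the negative-index blocks terminate and one gets $(\bigoplus_{n \ge n_0})_{l_p}$ which is still isomorphic to $(\bigoplus_{\mathbb Z})_{l_p}$). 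On each block, $\widetilde f$ is non-increasing, so $\widetilde f(t_{k-1}) \ge \widetilde f(t) \ge \widetilde f(t_k)$ for $t \in [t_{k-1}, t_k)$, and $\widetilde f(t_{k-1}) = \max(\|f\|_{L_\infty[t_{k-1}, t_k)}, \widetilde f(t_k))$; iterating, $\widetilde f(t_{k-1}) = \sup_{j \ge k} \|f\|_{L_\infty[t_{j-1}, t_j)}$.

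The key estimate is then the two-sided bound
\begin{equation*}
\sum_k W_k \, \Big(\sup_{j \ge k} c_j\Big)^p \ \asymp \ \sum_k c_k^p, \qquad \text{where } c_k := \|f\|_{L_\infty[t_{k-1}, t_k)},
\end{equation*}
valid because $1 \le W_k \le 2$: the right-hand-side dominates the left via the elementary inequality $\sum_k (\sup_{j\ge k} c_j)^p \le C_p \sum_k c_k^p$ (which holds since the sequence $(\sup_{j \ge k} c_j)_k$ is non-increasing in $k$ — wait, it is non-increasing as $k$ increases — and is dominated in the appropriate Lorentz-type sense; more directly, this is the statement that the discrete ``tilde'' operation is bounded on $\ell_p$, which is the content of \cite[Proposition 2.1]{GHS96}/\cite[Lemma 3.2]{GP03} already invoked in the proof of Theorem \ref{Thm5a}), and the reverse bound $\sum_k c_k^p \le \sum_k W_k(\sup_{j \ge k} c_j)^p$ is trivial since $W_k \ge 1$ and $\sup_{j \ge k} c_j \ge c_k$. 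Comparing $\int_{t_{k-1}}^{t_k} |\widetilde f(t)|^p w(t)\,dt$ with $W_k \widetilde f(t_{k-1})^p$ and $W_k \widetilde f(t_k)^p$ and summing gives $\|f\|_{\widetilde{L_p(w)}}^p \asymp \sum_k c_k^p = \sum_k \|f\|_{L_\infty[t_{k-1},t_k)}^p$, with constants depending only on $w$ (through the chosen partition, hence ultimately only on $w$).

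Finally I would package this as an isomorphism: the map $T: f \mapsto (f(t_{k-1} + (t_k - t_{k-1})\,\cdot\,))_{k \in \mathbb Z}$, rescaling each block $[t_{k-1}, t_k)$ affinely onto $[0,1]$, sends $\widetilde{L_p(w)}$ into $(\bigoplus_{k \in \mathbb Z} L_\infty[0,1])_{l_p}$; the displayed norm equivalence shows $T$ is bounded below, and a symmetric argument (or simply noting $\|f\|_{\widetilde{L_p(w)}} \le (\text{const})\big(\sum_k \|f\|_{L_\infty[t_{k-1},t_k)}^p\big)^{1/p}$ directly from $\widetilde f \le \sup_{j} \|f\|_{L_\infty[t_{j-1},t_j)} \chi_{\dots}$ blockwise) shows $T$ is bounded above; surjectivity is clear since any block-constant-in-$L_\infty$ datum is realized. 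The main obstacle I anticipate is purely bookkeeping: handling the behavior of the partition near $0$ cleanly (whether $\int_0^\varepsilon w$ is finite or infinite, and the resulting $\mathbb Z$ versus $\mathbb N$ indexing) and making sure the ``$\widetilde f(t_{k-1}) = \sup_{j \ge k} c_j$'' identity is stated with the right (essential) suprema so that no null-set issues creep in; the core inequality itself is soft and already available from the cited lemmas.
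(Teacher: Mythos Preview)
Your key inequality $\sum_k (\sup_{j \ge k} c_j)^p \le C_p \sum_k c_k^p$ is false, and this breaks the argument. Take $c_N = 1$ and $c_k = 0$ for all $k \ne N$: the right side equals $1$, while the left side equals the number of indices $k \le N$. (Note that equal-mass blocks with $W_k \ge 1$ force only finitely many negative-index blocks, since $\int_0^1 w < \infty$ by local integrability; so your indexing is effectively over $\mathbb{N}$, and the left side is $N$.) Concretely, $\|\chi_{[t_{N-1}, t_N)}\|_{\widetilde{L_p(w)}}^p = \int_0^{t_N} w \approx N$, whereas your proposed equivalent norm $\sum_k \|\chi_{[t_{N-1},t_N)}\|_{L_\infty[t_{k-1},t_k)}^p$ gives $1$. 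The discrete tilde operator is simply not bounded on unweighted $l_p$; the lemmas in \cite{GHS96} and \cite{GP03} that you cite concern \emph{geometrically weighted} sequences, which is exactly the structure missing from your partition.

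The paper repairs this by choosing the block masses to be geometric rather than constant: one takes $t_0=1$, fixes $a>1$ via $\int_0^1 w = 1/(a-1)$, and chooses $t_n$ so that $\int_{t_n}^{t_{n+1}} w = a^n$ for all $n\in\mathbb{Z}$ (local integrability and $\int_0^\infty w=\infty$ guarantee a genuinely doubly-infinite sequence with $t_n\to 0$ and $t_n\to\infty$). The needed estimate then becomes $\sum_n a^n \sup_{k \ge n} d_k \le C(a) \sum_n a^n d_n$ for $d_n \ge 0$, which \emph{is} true: bound $\sup$ by $\sum$, interchange the order of summation, and use $\sum_{n \le k} a^n = a^{k+1}/(a-1)$. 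This yields $\|f\|_{\widetilde{L_p(w)}}^p \approx \sum_n a^n \|f\chi_{[t_n,t_{n+1}]}\|_{L_\infty}^p$, a weighted $l_p$-sum of $L_\infty$-blocks, and after absorbing the scalar weights $a^{n/p}$ one obtains the claimed isomorphism with $(\bigoplus_{\mathbb{Z}} L_\infty[0,1])_{l_p}$.
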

%%%%%%%%%%%%%%%%%%%
\proof
Without loss of generality, we can assume that $\int_0^1 w(t)\, dt = \frac{1}{a - 1}$, where $a > 1$. Thanks to the assumptions, there 
is an increasing sequence $\{t_k\}_{k \in \mathbb Z}$ such that $t_0 = 1, t_n \rightarrow 0$ as $n \rightarrow - \infty, t_n \rightarrow + \infty$ 
as $n \rightarrow + \infty$ and 
\begin{equation} \label{42}
\int_{t_n}^{t_{n+1}} w(t) \, dt = a^n ~ {\rm for ~all} ~ n \in \mathbb Z.
\end{equation}
Then, applying once more \cite[Proposition 2.1]{GHS96} (see also \cite[Lemma 3.2]{GP03}), 
for every $f \in \widetilde{L_p(w)}$ we have
\begin{eqnarray*}
\| f \|_{ \widetilde{L_p(w)}}^p
&=&
\int_0^{\infty}  \widetilde{f(t)}^p w(t)\, dt = \sum_{n \in \mathbb Z} \int_{t_n}^{t_{n+1}} \esssup_{s \geq t} | f(s)|^p w(t) \, dt \\
&\leq&
\sum_{n \in \mathbb Z} \esssup_{s \geq t_n} | f(s)|^p a^n = \sum_{n \in \mathbb Z} \sup_{k \geq n} \esssup_{t_k \leq s \leq t_{k+1}} | f(s)|^p a^n \\
&\leq&
C(a) \, \sum_{n \in \mathbb Z} \| f \chi_{[t_n, t_{n+1}]} \|_{L_\infty}^p a^n = C(a) \, \| f \|_{\oplus}^p,
\end{eqnarray*}
where 
$$
 \| f \|_{\oplus}: = ( \sum_{n \in \mathbb Z} a^n \, \| f \chi_{[t_n, t_{n+1}]} \|_{L_\infty}^p)^{1/p}
$$
and $C(a)$ is some constant depending only on $a$ (and hence on $w$). On the other hand, 
\begin{eqnarray*}
\| f \|_{\oplus}^p 
&=&
a \sum_{n \in \mathbb Z} a^{n-1} \, \| f \chi_{[t_n, t_{n+1}]} \|_{L_\infty}^p 
= a \sum_{n \in \mathbb Z} \int_{t_{n-1}}^{t_n} \esssup_{t_n \leq s \leq t_{n+1}} | f(s)|^p  w(t)\, dt \\
&\leq&
a \sum_{n \in \mathbb Z} \int_{t_{n-1}}^{t_n} \esssup_{s \geq t} | f(s)|^p  w(t)\, dt = a \| f \|_{ \widetilde{L_p(w)}}^p.
\end{eqnarray*}
Since the space $L_{\infty}[a, b]$ is isomorphic to the space $L_{\infty}[0, 1]$ for every $0 < a < b < \infty$, 
the result follows.
\endproof
%%%%%%%%%%%%

\proof[Proof of Theorem \ref{Thm5}]
(i) It is clear that $\widetilde{\Lambda_{\varphi}} =\widetilde{L_1(\varphi^{\prime})}$. Therefore, from Proposition \ref{Pro6} it follows that 
$\widetilde{\Lambda_{\varphi}} \, \simeq \,  (\bigoplus_{n \in \mathbb N} L_{\infty}[0, 1])_{l_1}$ and, 
by Bourgain's result \cite[Corollary 7]{Bo81}, 
$\widetilde{\Lambda_{\varphi}}$ has the Dunford-Pettis property.

(ii) Since $q_{\varphi} < 1$, then the operator $C$ is bounded in $M_{\varphi}[0, \infty)$ (cf. \cite[Theorem 6.6, p. 138]{KPS82}) 
and hence, by Theorem A(i), the K\"othe dual of the space $CM_{\varphi}$ coincides with the space $\widetilde{\Lambda_{\psi}}$, 
where $\psi(t) = t/{\varphi(t)}, t > 0$. Therefore, applying Proposition \ref{Pro6}, we are able to get the result arguing in the same 
way as in the concluding part of the proof of Theorem \ref{Thm4a}.
\endproof

%%%%%%%%%%%%%%%%%%%%% Theorem 6 (now 8)
\begin{theorem} \label{Thm6}
The spaces $Ces_{\infty}(I)$ and $\widetilde{L_1}(I)$, where $I = [0, \infty)$ or $[0, 1]$, have the Dunford-Pettis property and they 
are not isomorphic.
\end{theorem}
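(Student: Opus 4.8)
My plan is to treat the two assertions — the Dunford--Pettis property and the failure of isomorphism — separately, in each case reducing $\widetilde{L_1}(I)$ to a concrete $l_1$-sum via the blocking technique of Proposition \ref{Pro6} and then transferring to $Ces_\infty(I)$ by K\"othe duality.

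For the Dunford--Pettis property I would start from the identification $(Ces_\infty(I))'\equiv\widetilde{L_1}(I)$: for $I=[0,\infty)$ this is Theorem A(i) in the extreme case $X=L_\infty$, and for $I=[0,1]$ it is the Luxemburg--Zaanen duality recalled in Section 2. Since $Ces_\infty(I)=CL_\infty$ has the Fatou property, this gives $Ces_\infty(I)=(Ces_\infty(I))''=(\widetilde{L_1}(I))'$. Next I would apply the blocking argument: Proposition \ref{Pro6} with $p=1$, $w\equiv1$ handles $I=[0,\infty)$ directly, and the same ``annular'' argument run at the endpoint $t=0$ (where the majorant $\widetilde f$ concentrates its mass, using the blocking lemma \cite[Proposition 2.1]{GHS96}) handles $I=[0,1]$; in both cases the norm of $\widetilde{L_1}(I)$ turns out to be equivalent to a lattice norm of the form $\sum_n a^n\|f\chi_{I_n}\|_{L_\infty}$ over a partition $\{I_n\}$ of $I$ into intervals. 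Rescaling each $I_n$ onto $[0,1]$ identifies $\widetilde{L_1}(I)$, as a Banach function space up to an equivalent lattice norm, with $(\bigoplus_{n\in\mathbb N}L_\infty[0,1])_{l_1}$, which has the Dunford--Pettis property by \cite[Corollary 7]{Bo81} (for $I=[0,\infty)$ this is already Theorem \ref{Thm5}(i) with $\varphi(t)=t$). Finally, since this identification is induced by an equivalent lattice norm followed by a measure isomorphism, it commutes with taking K\"othe duals, so
$$Ces_\infty(I)=(\widetilde{L_1}(I))'\simeq\big((\bigoplus_{n\in\mathbb N}L_\infty[0,1])_{l_1}\big)'=(\bigoplus_{n\in\mathbb N}L_1[0,1])_{l_\infty},$$
an $l_\infty$-sum of $L_1$-spaces, which has the Dunford--Pettis property by \cite[Theorem 1]{Bo81}.

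For the non-isomorphism I would first note that $Ces_\infty(I)$ contains a complemented copy of $L_1[0,1]$ by Proposition \ref{Pro1n} applied with $X=L_\infty(I)$ (on which $C$ is bounded), so it suffices to show that $\widetilde{L_1}(I)$ does not. By the first part $\widetilde{L_1}(I)\simeq(\bigoplus_{n\in\mathbb N}L_\infty[0,1])_{l_1}$, and applying Pe\l czy\'nski's isomorphism $L_\infty[0,1]\simeq l_\infty$ \cite{Pe58} coordinatewise we obtain $\widetilde{L_1}(I)\simeq(\bigoplus_{n\in\mathbb N}l_\infty)_{l_1}=l_1(l_\infty)$. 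By the Cembranos--Mendoza result \cite{CM08}, $l_1(l_\infty)$ contains no complemented copy of $L_1[0,1]$; hence neither does $\widetilde{L_1}(I)$, and therefore $Ces_\infty(I)\not\simeq\widetilde{L_1}(I)$ (in fact $Ces_\infty(I)\not\simeq\widetilde{L_1}(J)$ for all $I,J\in\{[0,\infty),[0,1]\}$).

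I expect the only genuinely technical point to be the blocking identification $\widetilde{L_1}(I)\simeq(\bigoplus_{n\in\mathbb N}L_\infty[0,1])_{l_1}$ together with the check that, being of this \emph{local} (equivalent-lattice-norm plus measure-isomorphism) type, it transports to the K\"othe-duality statement $Ces_\infty(I)\simeq(\bigoplus_{n\in\mathbb N}L_1[0,1])_{l_\infty}$ — this is exactly what allows Bourgain's two theorems to be applied on both sides. The case $I=[0,1]$ needs slightly more care than $I=[0,\infty)$ because it is the ``finite-mass'' situation not literally covered by Proposition \ref{Pro6}; once the blocking is performed at the endpoint $t=0$, everything else is a matter of combining the cited results of Bourgain, Pe\l czy\'nski and Cembranos--Mendoza with Theorem A and Proposition \ref{Pro1n}.
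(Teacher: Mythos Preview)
Your proposal is correct and follows essentially the same route as the paper: the blocking identification $\widetilde{L_1}(I)\simeq(\bigoplus_n L_\infty[0,1])_{l_1}$ via Proposition~\ref{Pro6} (extended to $[0,1]$ by the same argument, as the paper also remarks), the transfer to $Ces_\infty(I)\simeq(\bigoplus_n L_1[0,1])_{l_\infty}$ by K\"othe duality and the Fatou property, Bourgain's results \cite{Bo81} for the Dunford--Pettis property, and then Proposition~\ref{Pro1n} together with Pe{\l}czy\'nski \cite{Pe58} and Cembranos--Mendoza \cite{CM08} for the non-isomorphism. The only cosmetic difference is that for $I=[0,\infty)$ the paper packages the first step as a citation of Theorem~\ref{Thm5}, whereas you unwind that theorem's proof directly.
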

%%%%%%%%%%%%%%%%%%%
\proof
At first, let $I = [0, \infty)$. Since $ \widetilde{L_1} = \widetilde{\Lambda_{\varphi_1}}$, where $\varphi_1(t) = t$, and 
$Ces_{\infty} = CL_{\infty} = CM_{\varphi_0}$, where $\varphi_0(t) = 1$, by Theorem \ref{Thm5}, the spaces $\widetilde{L_1}$ and 
$Ces_{\infty}$ have the Dunford-Pettis property. 

Let us show that $\widetilde{L_1}$ and $Ces_{\infty}$ are not isomorphic. By Theorem \ref{Thm5}, the space $\widetilde{L_1}$ is 
isomorphic to the space $(\bigoplus_{n \in \mathbb N} L_{\infty}[0, 1])_{l_1}$ and therefore, according to the Pe{\l}czy\'nski result on 
isomorphism between $L_{\infty}$ and $l_{\infty}$ (\cite{Pe58}; see also \cite[Theorem 4.3.10]{AK06}), $\widetilde{L_1}$ is isomorphic 
also to $(\bigoplus_{n \in \mathbb N} l_{\infty})_{l_1}$. Since the latter space fails to contain a complemented subspace isomorphic to 
$L_1[0, 1]$ (see \cite[Proposition 3]{CM08}), then so does not $\widetilde{L_1}$. 
On the other hand, the space $Ces_{\infty}$ contains such a complemented subspace (see Proposition \ref{Pro1n}), and the
result follows.

It is easy to see that the assertion of Proposition \ref{Pro6} holds also for weighted $L_p$-spaces on $[0, 1]$ (with the same proof). 
Therefore, $\widetilde{L_1}[0, 1] \, \simeq \, (\bigoplus_{n \in \mathbb N} L_{\infty}[0, 1])_{l_1}$ and, since 
$(Ces_{\infty}[0, 1])^{\prime} = \widetilde{L_1}[0, 1]$, then  
$Ces_{\infty}[0, 1] \, \simeq \, (\bigoplus_{n \in \mathbb N} L_1[0, 1])_{l_{\infty}}$. 
Thus, the result can be proved in the same way as in the case of $[0,\infty).$
\endproof

Similar result can be deduced from Theorem \ref{Thm5} also for the separable part of a Marcinkie\-wicz space $M_{\varphi}^0$ 
(see also Corollary \ref{Cor5a}). In fact, since the space $C(M_{\varphi}^0)$ on $[0, \infty)$ is order continuous and the condition
$\beta_{M_{\varphi}^0} = q_{\varphi} < 1$ implies the boundedness of the operator $C$ on $M_{\varphi}^0$, by Theorem A(i), we have
$$
[C(M_{\varphi}^0)]^* = [C(M_{\varphi}^0)]^{\prime} = \widetilde{\Lambda_{\psi}}.
$$
As a result, applying Theorem \ref{Thm5}(i), we get that $C(M_{\varphi}^0)$
has the Dunford-Pettis property. However, we prefer to give the more direct proof of the latter fact
(without exploiting Bourgain's results from \cite{Bo81}) by using the following property of separable Ces\'aro-Marcinkiewicz spaces.

%%%%%%%%%%%%%%%%%%% Proposition 3 (8old)
\begin{proposition} \label{Prop8}
Suppose that $\varphi$ is an increasing concave function on $[0, \infty)$ such that $\lim_{t \rightarrow 0^+} \varphi(t) = 0$ and $q_{\varphi} < 1$. 
Let $X = C(M_{\varphi}^0)$ on $[0, \infty)$ and let $I_n: = [a_n, b_n]$ be a sequence of intervals from $[0, \infty)$ such that either
\begin{equation} \label{44}
b_1 > a_1 > b_2 > a_2 > \ldots > 0  ~  {\it and} ~ b_n \rightarrow 0^+ ~ {\it as} ~ n \rightarrow \infty
\end{equation}
or 
\begin{equation} \label{45}
a_1 < b_1 < a_2 < b_2 < \ldots   ~  {\it and} ~ a_n \rightarrow \infty ~ {\it as} ~ n \rightarrow \infty. 
\end{equation}
Then there are a subsequence of positive integers $\{n_k\}_{k=1}^{\infty}, n_1 < n_2 < \ldots$ and a constant $C > 0$ such that 
for every sequence $\{x_n\} \subset X$ satisfying the condition: $ \supp x_n \subset I_n, n = 1, 2, \ldots$ we have
\begin{equation} \label{46}
\max_{k = 1, \ldots, m} \| x_{n_k} \|_X \leq \| \sum_{k = 1}^m x_{n_k} \|_X \leq C \max_{k = 1, \ldots, m} \| x_{n_k} \|_X, \, m = 1, 2, \ldots.
\end{equation}
\end{proposition}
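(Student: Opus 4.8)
The plan is to reduce the statement to showing that the family $\{x_n/\|x_n\|_X\}$ is "nearly disjointly supported and far apart on the natural scale" of the Ces\`aro–Marcinkiewicz norm, which forces an $l_\infty$–type estimate on linear combinations. First I would record that the left inequality in \eqref{46} is automatic: since $C$ is a positive operator and the $x_n$ have pairwise disjoint supports, $C(\sum_{k=1}^m |x_{n_k}|) \geq C|x_{n_j}|$ pointwise for each $j$, and so $\|\sum_{k=1}^m x_{n_k}\|_X \geq \max_k \|x_{n_k}\|_X$ by the ideal property. The whole content is therefore the right-hand inequality.

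For the upper estimate I would work directly with the Marcinkiewicz norm of $C(\sum_{k=1}^m |x_{n_k}|)$. Fix $t > 0$; then $\varphi(t)/t \int_0^t \big(C\sum_k |x_{n_k}|\big)^*(s)\,ds$ has to be controlled. The key geometric fact is that, because the intervals $I_n$ shrink to $0$ (case \eqref{44}) or march off to $\infty$ (case \eqref{45}), after passing to a subsequence $\{n_k\}$ the "masses" $\int_{I_{n_k}} |x_{n_k}|$ and the scales $a_{n_k}, b_{n_k}$ can be made to grow (or decay) as fast as we like — in particular geometrically, with ratio controlled using $q_\varphi < 1$ via the estimate \eqref{estimate2.13}, i.e. $\int_0^t \frac{ds}{\varphi(s)} \leq C\,\frac{t}{\varphi(t)}$. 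Concretely, writing $g_k = C|x_{n_k}|$, one has $g_k$ supported essentially on $[a_{n_k}, \infty)$ with $g_k(x) \le \|x_{n_k}\|_X \cdot \frac{x}{\varphi(x)}\cdot\frac{1}{x}$-type pointwise bounds coming from $g_k \in M_\varphi^0$; summing these and using that $\sum_k \frac{1}{\varphi(a_{n_k})}$ (or the dual sum for the shrinking case) is dominated by its largest term up to a constant — this is exactly where $q_\varphi<1$ and the geometric spacing enter — yields $\|\sum_k x_{n_k}\|_X \leq C \max_k \|x_{n_k}\|_X$. I would choose the subsequence $\{n_k\}$ greedily so that at each step the next interval is "deep enough" inside the region already used that its contribution to the Marcinkiewicz supremum at any scale relevant to the earlier terms is negligible, and conversely the earlier terms contribute negligibly at the scales relevant to $I_{n_k}$; the separability ($M_\varphi^0$ rather than $M_\varphi$) is used to guarantee that the tails $\frac{\varphi(t)}{t}\int_0^t g_k^*$ actually vanish at $0$ and $\infty$, making such a choice possible.

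The main obstacle is the bookkeeping at a single scale $t$ when several of the functions $g_{n_k}$ overlap in the rearrangement $\big(\sum_k g_{n_k}\big)^*$: unlike a genuine disjoint-support situation, the Ces\`aro images $g_{n_k} = C|x_{n_k}|$ all have long tails reaching toward $\infty$ (in case \eqref{44}, toward the right) or toward $0$, so they are not disjointly supported even though the $x_{n_k}$ are. The right way around this is to split, for each fixed $t$, the index set into the (at most one) "critical" $k$ whose interval $I_{n_k}$ straddles the scale $t$ and the rest, bounding the critical term by $\max_k\|x_{n_k}\|_X$ directly and the geometric tail of the others by a convergent series whose sum is $\leq C\max_k\|x_{n_k}\|_X$; here one invokes \eqref{estimate2.13} (equivalently $q_\varphi<1$) to sum $\sum \varphi(a_{n_{k}})^{-1}$-type expressions against $\frac{t}{\varphi(t)}$. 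Once this single-scale estimate is in place, taking the supremum over $t$ gives \eqref{46}, and the construction of $\{n_k\}$ is completed by a routine diagonal/greedy argument using $\lim_{t\to 0^+}\varphi(t)=0$ together with $q_\varphi<1$.
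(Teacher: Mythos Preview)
Your high-level shape is right --- the left inequality is trivial from disjointness and the ideal property, and the right one should come from geometric spacing of the intervals combined with $q_\varphi<1$ --- but two concrete points are missing or mis-stated, and they are exactly where the work lies.

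First, the subsequence $\{n_k\}$ must be chosen \emph{before} the functions $x_n$ are given: the statement asserts a single subsequence that works for every admissible family $\{x_n\}$. So you cannot select $\{n_k\}$ using any information about $g_k=C|x_{n_k}|$ (such as where its $M_\varphi^0$-tails become small); the separability of $M_\varphi^0$ plays no role here. The paper chooses the subsequence purely from the endpoints, using that $\psi(t)=t/\varphi(t)\to 0$ as $t\to 0^+$ (case \eqref{44}) or $\psi(t)\to\infty$ as $t\to\infty$ (case \eqref{45}), to arrange
\[
\sum_{k>n}\psi(b_k)\le \psi(a_n)\quad\text{(case \eqref{44})},\qquad
\sum_{i<k}\psi(b_i)\le \psi(a_k)\quad\text{(case \eqref{45})}.
\]

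Second, and more seriously, your pointwise bound ``$g_k(x)\lesssim \|x_{n_k}\|_X/\varphi(x)$'' does not follow merely from $g_k\in M_\varphi^0$; you need a uniform control of the mass $\int_{I_k}|x_k|$ in terms of $\|x_k\|_X$. The paper obtains this from duality: by Theorem~A(i), $(CX)'=\widetilde{\Lambda_\psi}$, and then the H\"older--Rogers inequality together with $\|\chi_{[a,b]}\|_{\widetilde{\Lambda_\psi}}=\psi(b)$ gives
\[
\int_{a}^{b}|x(s)|\,ds\le C_1\,\|x\|_X\,\psi(b).
\]
With this in hand, the paper does \emph{not} work with the rearrangement $\big(\sum_k g_k\big)^*$ at all. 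It writes $C\big(\sum_k|x_k|\big)(t)$ explicitly as a sum of three pieces $f_1+f_2+f_3$ (tails from intervals already passed, the tail of the last completed interval, and the partial integral over the interval containing $t$), bounds each pointwise by $C_1\varphi(t)^{-1}\max_i\|x_i\|_X$ using the mass estimate above and the spacing condition on $\psi(b_k)$, and then invokes \eqref{estimate2.13} (equivalently $q_\varphi<1$) once to bound $\|1/\varphi\|_{M_\varphi}$. This sidesteps entirely the difficulty you flag about overlapping tails in the rearrangement.
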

%%%%%%%%%%%%%%%%%%%
\proof
Since a given sequence $\{x_n\}$ under consideration consists of pairwise disjoint functions, the left inequality in (\ref{46}) holds for an arbitrary subsequence 
$\{n_k\}_{k=1}^{\infty}$. So, we need only to prove the reverse inequality. Obviously, we may assume that $x_n \geq 0$ a.e.
Since $q_{\varphi} < 1$, then $\lim_{t \rightarrow 0^+} \frac{t}{\varphi(t)} = 0$. Therefore, in the case (\ref{44}), applying the diagonal 
procedure, from any given sequence $\{I_n\}$ we can extract a subsequence of intervals (which we will denote still by $I_n = [a_n, b_n]$) 
such that
\begin{equation} \label{47}
\sum_{k = n + 1}^{\infty} \psi(b_k) \leq \psi(a_n), ~{\rm where} ~ \psi(t) = t/\varphi(t).
\end{equation}
We claim that the corresponding sequence of functions (still denoting by $\{x_n\}, \supp x_n \subset I_n$) satisfies the right-hand inequality 
in (\ref{46}). For any $m \in \mathbb N$ and $t \in (0, \infty)$ we have
\begin{eqnarray*}
\frac{1}{t} \int_0^t \Big( \sum_{k = 1}^m x_k(s) \Big) \, ds 
&=&
\frac{1}{t} \sum_{j = 2}^m \sum_{i = m - j + 2}^m \int_{a_i}^{b_i} x_i(s) \, ds \, \chi_{[a_{m - j + 1}, b_{m - j + 1}]}(t) \\
& +&
\frac{1}{t} \sum_{j = 1}^m  \int_{a_{m - j + 1}}^{t} x_{m - j + 1}(s) \, ds \, \chi_{[a_{m - j + 1}, b_{m - j + 1}]}(t) \\
& +&
\frac{1}{t} \sum_{j = 1}^m \sum_{i = m - j + 1}^m \int_{a_i}^{b_i} x_i(s) \, ds \, \chi_{[b_{m - j + 1}, a_{m - j}]}(t) \\
&=& 
f_1(t) + f_2(t) + f_3(t),
\end{eqnarray*}
where $a_0 = \infty$ and
$$
f_1(t): = \frac{1}{t} \sum_{j = 2}^m \sum_{i = m - j + 2}^m \int_{a_i}^{b_i} x_i(s) \, ds \, \chi_{[a_{m - j + 1}, a_{m - j}]}(t), 
$$
$$
f_2(t): = \frac{1}{t} \sum_{j = 1}^m  \int_{a_{m - j + 1}}^{b_{m - j + 1}} x_{m - j + 1}(s) \, ds \, \chi_{[b_{m - j + 1}, a_{m - j}]}(t), 
$$
$$
f_3(t): = \frac{1}{t} \sum_{j = 1}^m  \int_{a_{m - j + 1}}^{t} x_{m - j + 1}(s) \, ds \, \chi_{[a_{m - j + 1}, b_{m - j + 1}]}(t).
$$
Since $\beta_{M_{\varphi}^0} = q_{\varphi} < 1$, the operator $C$ is bounded in $M_{\varphi}^0$ (cf. \cite[Theorem 6.6, 
p. 138]{KPS82}) and hence, by Theorem A(i), $(CX)^* = (CX)^{\prime} = \widetilde{X^{\prime}} = \widetilde{\Lambda_{\psi}}$ with 
equivalent norms. Thus, by ({\ref{HRI}) and (\ref{equality}), for arbitrary $0 < a < b < \infty$ 
and $x \in X$
\begin{equation} \label{48}
\int_a^b |x (s)| \, ds \leq C_1 \, \| x \|_X \| \chi_{[a, b]} \|_{\widetilde{\Lambda_{\psi}}} = C_1 \, \| x \|_X \psi(b).
\end{equation}
Hence, by (\ref{47}),
\begin{eqnarray*}
f_1(t)
&\leq&
\frac{C_1}{t} \sum_{j = 2}^m \sum_{i = m - j + 2}^m \psi(b_i)\, \| x_i \|_X \, \chi_{[a_{m - j + 1}, a_{m - j}]}(t) \\
&\leq&
\frac{C_1}{t} \sum_{j = 2}^m  \psi(a_{m - j + 1})\, \chi_{[a_{m - j + 1}, a_{m - j}]}(t) \cdot  \max_{i = 1, \ldots, m} \| x_i \|_X \\
&\leq&
\frac{C_1}{\varphi(t)} \sum_{j = 2}^m \chi_{[a_{m - j + 1}, a_{m - j}]}(t) \cdot  \max_{i = 1, \ldots, m} \| x_i \|_X \leq
\frac{C_1}{\varphi(t)}  \max_{i = 1, \ldots, m} \| x_i \|_X, 
\end{eqnarray*}
whence since $q_{\varphi} < 1$, by (\ref{estimate2.13}), it follows that
\begin{equation} \label{49}
\| f_1 \|_{M_{\varphi}} \leq C_1 \frac{\varphi(t)}{t} \int_0^t \frac{1}{\varphi(s)} ds \max_{i = 1, \ldots, m} \| x_i \|_X \leq 
C_1 C_2 \max_{i = 1, \ldots, m} \| x_i \|_X.
\end{equation}
Similarly, since $\psi$ increases, we have
\begin{eqnarray*}
f_2(t)
&\leq&
\frac{C_1}{t} \sum_{j = 1}^m \psi(b_{m - j + 1}) \| x_{m - j + 1}\|_X \cdot \chi_{[b_{m - j + 1}, a_{m - j}]}(t) \\
&\leq&
\frac{C_1}{\varphi(t)} \sum_{j = 1}^m \chi_{[b_{m - j + 1}, a_{m - j}]}(t) \max_{i = 1, \ldots, m} \| x_i \|_X \leq 
\frac{C_1}{\varphi(t)} \max_{i = 1, \ldots, m} \| x_i \|_X,
\end{eqnarray*}
and again
\begin{equation} \label{4.10}
\| f_2 \|_{M_{\varphi}} \leq C_1 C_2 \max_{i = 1, \ldots, m} \| x_i \|_X.
\end{equation}
At final, using (\ref{48}) once more, we have
\begin{eqnarray*}
f_3(t)
&\leq&
\frac{C_1}{t} \sum_{j = 1}^m \psi(t) \| x_{m - j + 1} \|_X \cdot \chi_{[a_{m - j + 1}, b_{m - j +1}]}(t) \\
&\leq&
\frac{C_1}{\varphi(t)} \sum_{j = 1}^m \chi_{[a_{m - j + 1}, b_{m - j +1}]}(t) \max_{i = 1, \ldots, m} \| x_i \|_X 
\leq \frac{C_1}{\varphi(t)}  \max_{i = 1, \ldots, m} \| x_i \|_X,
\end{eqnarray*}
whence again
\begin{equation} \label{4.11}
\| f_3 \|_{M_{\varphi}} \leq C_1 C_2 \max_{i = 1, \ldots, m} \| x_i \|_X.
\end{equation}
Thus, from (\ref{48})--(\ref{4.11}) it follows that
$$
\| \sum_{k = 1}^m x_k \|_X \leq \| f_1 \|_{M_{\varphi}} + \| f_2 \|_{M_{\varphi}} + \| f_3 \|_{M_{\varphi}} \leq C \max_{i = 1, \ldots, m} \| x_i \|_X,
$$
where the constant $C: = 3 C_1C_2$ depends only on the function $\varphi$.

Regarding the case (\ref{45}), we note that condition $q_{\varphi} < 1$ implies $\lim_{t \rightarrow \infty} \frac{t}{\varphi(t)} = \infty$. 
Hence, from any given sequence of intervals we can select a subsequence of intervals (denoting still by $I_n = [a_n, b_n]$) such that
\begin{equation} \label{4.12}
\sum_{i = 1}^{k-1} \psi(b_i) \leq \psi (a_k), k = 2, 3, \ldots .
\end{equation}
For arbitrary $m \in \mathbb N$ and $t > 0$ we have
\begin{eqnarray*}
\frac{1}{t} \int_0^t \Big( \sum_{k = 1}^m x_k(s) \Big) \, ds 
&=&
\frac{1}{t} \sum_{k = 1}^m \Big (\sum_{i = 1}^{k - 1} \int_{a_i}^{b_i} x_i(s) \, ds  + \int_{a_k}^t x_k(s) \, ds \Big) \chi_{[a_k, b_k]}(t) \\
& +&
\frac{1}{t} \sum_{k = 1}^m \sum_{i = 1}^k \int_{a_i}^{b_i} x_i(s) \, ds \cdot \chi_{[b_k, a_{k + 1}]}(t) = g_1(t) + g_2(t) + g_3(t),
\end{eqnarray*}
where $a_{m+1} = \infty$ and
$$
g_1(t): = \frac{1}{t} \sum_{k = 2}^m \sum_{i = 1}^{k - 1} \int_{a_i}^{b_i} x_i(s) \, ds \cdot \chi_{[a_k, a_{k + 1}]}(t), 
$$
$$
g_2(t): = \frac{1}{t} \sum_{k = 1}^m  \int_{a_k}^{b_k} x_i(s) \, ds \cdot \chi_{[b_k, a_{k+1}]}(t), 
$$
$$
g_3(t): = \sum_{k = 1}^m  \int_{a_k}^t x_k(s) \, ds \cdot \chi_{[a_k, b_k]}(t).
$$
Firstly, applying (\ref{48}) and (\ref{4.12}) we obtain
\begin{eqnarray*}
g_1(t) 
&\leq&
\frac{C_1}{t} \sum_{k = 2}^m \sum_{i = 1}^{k - 1} \psi(b_i) \| x_i\|_X \, \cdot \chi_{[a_k, a_{k + 1}]}(t) \\
&\leq&
\frac{C_1}{t} \sum_{k = 2}^m \psi(a_k) \cdot \chi_{[a_k, a_{k + 1}]}(t) \cdot  \max_{i = 1, \ldots, m} \| x_i \|_X \\
&\leq&
\frac{C_1}{\varphi(t)} \sum_{k = 2}^m \chi_{[a_k, a_{k + 1}]}(t) \cdot  \max_{i = 1, \ldots, m} \| x_i \|_X 
\leq \frac{C_1}{\varphi(t)} \max_{i = 1, \ldots, m} \| x_i \|_X.
\end{eqnarray*}
Next, similarly,
\begin{eqnarray*}
g_2(t) 
&\leq&
\frac{C_1}{t} \sum_{k = 1}^m  \psi(b_k) \| x_k \|_X \cdot \chi_{[b_k, a_{k+1}]}(t)
\leq \frac{C_1}{\varphi(t)} \max_{i = 1, \ldots, m} \| x_i \|_X
\end{eqnarray*}
and
\begin{equation*}
g_3(t) \leq \frac{C_1}{t} \sum_{k = 1}^m  \psi(t) \| x_k \|_X \cdot \chi_{[a_k, b_k]}(t) \leq 
\frac{C_1}{\varphi(t)}  \max_{i = 1, \ldots, m} \| x_i \|_X.
\end{equation*}
As a result, using (\ref{estimate2.13}), we have
$$
\| \sum_{k = 1}^m x_k \|_X \leq \| g_1 \|_{M_{\varphi}} + \| g_2 \|_{M_{\varphi}} + \| g_3 \|_{M_{\varphi}} \leq C \max_{i = 1, \ldots, m} \| x_i \|_X,
$$
where the constant $C: = 3 C_1C_2$ depends only on the function $\varphi$.
\endproof

%%%%%%%%%%%%%%%%%%% Corollary 5 (9old)
\begin{corollary} \label{Cor9}
Let $\varphi$ satisfy all the conditions of Proposition \ref{Prop8} and let $X = C(M_{\varphi}^0)$ on $[0, \infty)$. Suppose
that $I_n: = [a_n, b_n], n = 1, 2, \ldots$, be a sequence of intervals from $[0, \infty)$ such that either
$b_1 > a_1 > b_2 > a_2 > \ldots > 0$ and $b_n \rightarrow 0^+$ as $n \rightarrow \infty$ or 
$a_1 < b_1 < a_2 < b_2 < \ldots$ and $a_n \rightarrow \infty$ as $n \rightarrow \infty$. Then, every semi-normalized sequence
$\{f_n\} \subset X$ such that $\supp f_n \subset I_n, n = 1, 2, \ldots$ contains a subsequence $\{f_{n_k}\}$ which is equivalent 
in $X$ to the canonical basis in $c_0$.
\end{corollary}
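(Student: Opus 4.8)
The plan is to read off Corollary~\ref{Cor9} directly from Proposition~\ref{Prop8}, the only extra ingredient being the elementary description of sequences equivalent to the unit vector basis of $c_0$.

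First I would apply Proposition~\ref{Prop8} to the given sequence of intervals $\{I_n\}$, which provides a subsequence $\{n_k\}_{k=1}^{\infty}$, $n_1 < n_2 < \dots$, and a constant $C > 0$ (depending only on $\varphi$ and on the intervals, \emph{not} on any particular functions) such that
\[
\max_{k=1,\dots,m}\|x_{n_k}\|_X \le \Big\|\sum_{k=1}^m x_{n_k}\Big\|_X \le C\max_{k=1,\dots,m}\|x_{n_k}\|_X
\]
for every sequence $\{x_n\}\subset X$ with $\supp x_n\subset I_n$. I claim that this same subsequence $\{f_{n_k}\}$ of the given semi-normalized sequence $\{f_n\}$ is equivalent in $X$ to the canonical $c_0$-basis. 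Fix constants $0 < a \le b < \infty$ with $a\le\|f_n\|_X\le b$ for all $n$, which exist by the semi-normalization.

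Given any finite collection of scalars $c_1,\dots,c_m$, apply the displayed inequalities to the sequence $\{x_n\}$ defined by $x_{n_k}:=c_k f_{n_k}$ for $k=1,\dots,m$ and $x_n:=0$ for the remaining indices; its terms are still supported in the corresponding $I_n$ because $\supp f_{n_k}\subset I_{n_k}$. Together with $a\le\|f_{n_k}\|_X\le b$ this yields
\[
a\max_{k=1,\dots,m}|c_k| \le \Big\|\sum_{k=1}^m c_k f_{n_k}\Big\|_X \le Cb\max_{k=1,\dots,m}|c_k|,
\]
valid for every $m$. In particular, if $(c_k)\in c_0$ the partial sums of $\sum_k c_k f_{n_k}$ are Cauchy in $X$, so the assignment $(c_k)\mapsto\sum_k c_k f_{n_k}$ extends to a linear isomorphism of $c_0$ onto the closed linear span of $\{f_{n_k}\}$ in $X$ with norm bounds $a$ and $Cb$. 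Hence $\{f_{n_k}\}$ is equivalent in $X$ to the unit vector basis of $c_0$, which is the assertion.

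Since essentially all the work has already been done in Proposition~\ref{Prop8}, there is no serious obstacle. The single point requiring a moment's care is that the subsequence $\{n_k\}$ supplied by Proposition~\ref{Prop8} is selected \emph{before} and \emph{independently of} the coefficients $c_k$, so that the same indices work simultaneously for all coefficient sequences; plugging $x_{n_k}=c_k f_{n_k}$ straight into Proposition~\ref{Prop8} makes this transparent and avoids even citing the general $c_0$-stability criterion for semi-normalized basic sequences.
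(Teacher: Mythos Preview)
Your proof is correct and follows essentially the same route as the paper: apply Proposition~\ref{Prop8} to obtain the subsequence $\{n_k\}$ and constant $C$, substitute $x_{n_k}=c_k f_{n_k}$ into the resulting two-sided inequality, and use the semi-normalization bounds to conclude equivalence to the $c_0$-basis. The paper's write-up differs only cosmetically in how it defines the auxiliary sequence $\{x_n\}$ off the indices $n_k$.
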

%%%%%%%%%%%%%%%%%%%
\proof
At first, applying Proposition \ref{Prop8}, we find a subsequence of positive integers $\{n_k\}_{k = 1}^{\infty}, \newline 
n_1 < n_2 < \ldots$ and a constant $C > 0$ such that for every sequence $\{x_n\} \subset X$ with $ \supp x_n \subset I_n, n = 1, 2, \ldots,$ 
we have
$$
\max_{k = 1, \ldots, m} \| x_{n_k} \|_X \leq \| \sum_{k = 1}^m x_{n_k} \|_X \leq C \, \max_{k = 1, \ldots, m} \| x_{n_k} \|_X, ~ m = 1, 2, \ldots .
$$
In particular, setting $x_n = c_k f_n$ if $n_k \leq n < n_{k + 1}, k = 1, 2, \ldots$, where $(c_k)$ is an arbitrary sequence from $c_0$, and 
assuming that $D^{-1} \leq \| f_n \|_X \leq D,$ $n = 1, 2, \ldots$ for all $m \in \mathbb N,$ we obtain
$$
D^{-1} \max_{k = 1, \ldots, m} | c_k | \leq \| \sum_{k = 1}^m c_k f_{n_k} \|_X \leq C \,D \max_{k = 1, \ldots, m} | c_k |.
$$
Since $(c_k) \in c_0$, then the series $\sum_{k = 1}^{\infty} c_k f_{n_k}$ converges in $X$ and we have
$$
D^{-1} \| (c_k) \|_{c_0} \leq \| \sum_{k = 1}^{\infty} c_k f_{n_k} \|_X \leq C \,D \| (c_k) \|_{c_0}.
$$ 
\endproof

%%%%%%%%%%%%%%%%%% Theorem 7 (11old)
\begin{theorem}\label{Thm7n}
Let $\varphi$ be an increasing concave function on $[0, \infty)$ such that $\lim_{t \rightarrow 0^+} \varphi(t) = 0$ and $q_{\varphi} < 1$. 
Then, the space $X = C(M_{\varphi}^0)$ on $[0, \infty)$ has the Dunford-Pettis property. 
\end{theorem}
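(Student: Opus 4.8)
The plan is the following. Set $X:=C(M_{\varphi}^0)$ on $[0,\infty)$. Since $\beta_{M_{\varphi}^0}=q_{\varphi}<1$, the operator $C$ (and trivially each dilation $\sigma_\tau$) is bounded on the symmetric space $M_{\varphi}^0$, and $M_{\varphi}^0$ is order continuous, so $X$ is order continuous and, by Theorem A(i), $X^{*}=(CX)'=\widetilde{\Lambda_{\psi}}$ with $\psi(t)=t/\varphi(t)$, the pairing being $\langle\mu,f\rangle=\int_0^\infty\mu f$. Recall that $X$ has the Dunford--Pettis property iff $\langle\mu_n,f_n\rangle\to 0$ whenever $f_n\to 0$ weakly in $X$ and $\mu_n\to 0$ weakly in $X^{*}$; and since a scalar sequence converges to $0$ once each of its subsequences has a further subsequence converging to $0$, it suffices to find, for any such pair, a subsequence along which the pairing tends to $0$. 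So fix such $(f_n),(\mu_n)$; normalizing, assume $\|f_n\|_X\le 1$, $\|\mu_n\|_{X^{*}}\le 1$; if $\liminf_n\|f_n\|_X=0$ a subsequence with $\|f_n\|_X\to0$ already works, so assume $\inf_n\|f_n\|_X>0$.

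By the subsequence splitting lemma for the order continuous Banach lattice $X$, after passing to a subsequence we may write $f_{n_k}=g_k+h_k$, where $\{g_k\}$ is approximately order bounded (hence bounded and, by order continuity, \emph{uniformly tight}: for every $\eta>0$ there is a compact $[\alpha,\beta]\subset(0,\infty)$ with $\sup_k\|g_k\chi_{(0,\alpha)\cup(\beta,\infty)}\|_X<\eta$) and $(h_k)$ is pairwise disjoint. As $(f_{n_k})$ and $\{g_k\}$ are relatively weakly compact, so is $\{h_k\}$; a disjoint relatively weakly compact sequence in a Banach lattice is weakly null, so $h_k\to0$ weakly, hence also $g_k=f_{n_k}-h_k\to0$ weakly. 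For the tight part: fix $\eta$ and $[\alpha,\beta]$ as above. By (the proof of) Proposition~\ref{Pro1n} applied to $M_{\varphi}^0$, the closed subspace $X|_{[\alpha,\beta]}$ of $X$ is isomorphic to $L_1[0,1]$, so $(X|_{[\alpha,\beta]})^{*}\simeq L_\infty$; since multiplication by $\chi_{[\alpha,\beta]}$ is bounded on $X$ (and its adjoint on $X^{*}$), both $g_k\chi_{[\alpha,\beta]}\to0$ weakly in $X|_{[\alpha,\beta]}$ and $\mu_{n_k}\chi_{[\alpha,\beta]}\to0$ weakly in its dual, and the Dunford--Pettis property of $L_1$ gives $\langle\mu_{n_k}\chi_{[\alpha,\beta]},g_k\chi_{[\alpha,\beta]}\rangle\to0$. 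Combined with $\bigl|\langle\mu_{n_k},g_k\chi_{(0,\alpha)\cup(\beta,\infty)}\rangle\bigr|\le\|g_k\chi_{(0,\alpha)\cup(\beta,\infty)}\|_X<\eta$ this yields $\limsup_k|\langle\mu_{n_k},g_k\rangle|\le\eta$, i.e. $\langle\mu_{n_k},g_k\rangle\to0$.

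For the disjoint part, passing to a subsequence assume $\inf_k\|h_k\|_X>0$ (otherwise that term is trivially negligible). A disjoint weakly null sequence carries asymptotically no norm on a fixed compact interval $[\epsilon,R]$: otherwise, using again $X|_{[\epsilon,R]}\simeq L_1[\epsilon,R]$, a subsequence of $(h_k\chi_{[\epsilon,R]})$ would be a semi-normalized disjoint sequence in $L_1$, hence equivalent to the $\ell_1$-basis, contradicting weak nullity via the Schur property of $\ell_1$. Thus $\|h_k\chi_{[\epsilon,R]}\|_X\to0$ for all $0<\epsilon<R$, and a diagonal argument — picking $\epsilon_l\downarrow0$, $R_l\uparrow\infty$ and, after $h_{k_l}$ is fixed, choosing $\epsilon_{l+1},R_{l+1}$ so small/large that also $\|h_{k_l}\chi_{(0,\epsilon_{l+1})\cup(R_{l+1},\infty)}\|_X<2^{-l}$ — lets us replace each $h_{k_l}$, up to a norm perturbation tending to $0$, by $\widehat h_l:=h_{k_l}\chi_{(\epsilon_{l+1},\epsilon_l)}$ (or by $h_{k_l}\chi_{(R_l,R_{l+1})}$; at least one of these two families is semi-normalized along infinitely many $l$, and we keep that one). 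The $\widehat h_l$ are semi-normalized and supported on pairwise disjoint intervals shrinking to $0$ (resp. growing to $\infty$), so Corollary~\ref{Cor9} produces a subsequence equivalent in $X$ to the $c_0$-basis; by the perturbation, the corresponding subsequences $(\widetilde h_p)$ of $(h_k)$ and $(\widetilde f_p)$ of $(f_n)$, $\widetilde f_p=\widetilde g_p+\widetilde h_p$, satisfy $Y_0:=\overline{\operatorname{span}}\{\widetilde h_p\}\simeq c_0$, whence $Y_0^{*}\simeq\ell_1$. The restrictions to $Y_0$ of the corresponding weakly null functionals $\widetilde\mu_p$ are weakly null in $Y_0^{*}$, hence norm null by the Schur property of $\ell_1$, and since $\sup_p\|\widetilde h_p\|_X<\infty$ we get $\langle\widetilde\mu_p,\widetilde h_p\rangle\to0$; together with $\langle\widetilde\mu_p,\widetilde g_p\rangle\to0$ from the previous paragraph, $\langle\widetilde\mu_p,\widetilde f_p\rangle\to0$, as needed.

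The delicate point is the disjoint part: forcing a disjoint weakly null sequence into the hypotheses of Corollary~\ref{Cor9}, which demands supports that are pairwise disjoint \emph{intervals} escaping to one endpoint of $[0,\infty)$; the passage from disjoint measurable supports to disjoint intervals, and the avoidance of the nesting of the one-sided tails $(0,\epsilon_l)$, are exactly what restriction to the annuli $(\epsilon_{l+1},\epsilon_l)$ accomplishes, the perturbations being controlled by order continuity. One also invokes the standard facts that an order continuous Banach lattice satisfies the subsequence splitting lemma and that disjoint relatively weakly compact sequences in Banach lattices are weakly null; the tight part is then routine via Proposition~\ref{Pro1n} and the classical Dunford--Pettis property of $L_1$. (An alternative organization is to first combine Proposition~\ref{Pro1n} with Corollary~\ref{Cor9} to establish $X\simeq\bigl(\bigoplus_{n}L_1[0,1]\bigr)_{c_0}$ and then verify the Dunford--Pettis property of such a $c_0$-sum by hand: for $(f_n)$ weakly null in $\bigl(\bigoplus_k L_1^{(k)}\bigr)_{c_0}$ and $(\mu_n)$ weakly null in its dual $\bigl(\bigoplus_k L_\infty^{(k)}\bigr)_{\ell_1}$, split $\langle\mu_n,f_n\rangle=\sum_{k\le K}\langle\mu_n^{(k)},f_n^{(k)}\rangle+\sum_{k>K}\langle\mu_n^{(k)},f_n^{(k)}\rangle$; the tail is uniformly small because weakly null sequences in an $\ell_1$-sum are uniformly $\ell_1$-tight, while each of the finitely many remaining terms tends to $0$ by the Dunford--Pettis property of $L_1$.)
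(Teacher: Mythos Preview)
Your overall strategy---handle the ``bulk'' of the pairing on compact intervals via the Dunford--Pettis property of $L_1$ (Proposition~\ref{Pro1n}), and handle the escaping mass via Corollary~\ref{Cor9} and the $c_0/\ell_1$ duality---is exactly the right idea and is essentially what the paper does. However, your execution has a genuine gap at the very first step.

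You invoke the subsequence splitting lemma ``for the order continuous Banach lattice $X$''. This is not valid: order continuity alone does \emph{not} guarantee the subsequence splitting property. By Weis's characterization (Proc.\ Amer.\ Math.\ Soc.\ 105 (1989), 87--96), a Banach lattice has the subsequence splitting property if and only if it is order continuous \emph{and} the spaces $\ell_\infty^n$ are not uniformly lattice finitely representable on disjoint vectors. The space $X=C(M_\varphi^0)$ violates this second condition: by the very Corollary~\ref{Cor9} you use later, $X$ contains normalized pairwise disjoint sequences equivalent to the canonical $c_0$-basis, hence contains $\ell_\infty^n$'s uniformly on disjoint elements. (Concretely, if $(u_k)$ is such a sequence, the bounded sequence $v_n=\sum_{k=1}^n u_k$ admits no splitting into an almost order bounded part plus a pairwise disjoint part---the same obstruction as for $\sum_{k=1}^n e_k$ in $c_0$.) So the decomposition $f_{n_k}=g_k+h_k$ cannot be obtained as a black box.

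The paper avoids this entirely. Rather than splitting $u_n$, it observes directly that $\int_a^b u_n v_n\,dt\to 0$ for every $0<a<b<\infty$ (by the Dunford--Pettis property of $L_1[a,b]$, together with the identification $(X|_{[a,b]})^*\simeq L_\infty[a,b]$ and the inclusion $(L_\infty[a,b])^*\subset (X')^*$). Hence, along a subsequence, the pairing must concentrate on one-sided tails, and a further diagonal extraction produces pairwise disjoint intervals $I_n$ escaping to $0$ or to $\infty$ with $\int_{I_n} u_n v_n\ge \delta/2$. One then sets $f_n=u_n\chi_{I_n}$ and applies Corollary~\ref{Cor9} and the Dunford--Pettis property of $c_0$ exactly as you do for your ``disjoint part''. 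In other words, your treatment of the disjoint part is fine and your handling of the functionals via $Y_0^*\simeq\ell_1$ is correct; what is missing is that the reduction to intervals should be done on the \emph{pairing} $u_n v_n$ directly, not by first splitting $u_n$ via a lemma that is unavailable here.

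A minor additional remark: your parenthetical alternative, proving $X\simeq\bigl(\bigoplus_n L_1[0,1]\bigr)_{c_0}$, is plausible but is not an immediate consequence of Proposition~\ref{Pro1n} and Corollary~\ref{Cor9}; it would itself require a nontrivial argument.
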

%%%%%%%%%%%%%%%%%
\proof
On the contrary, assuming that $X$ does not have the Dunford-Pettis property, we can find sequences $\{u_n\} \subset X$ such that 
$\| u_n \|_X = 1, u_n \rightarrow 0$ weakly in $X$ and $\{v_n\} \subset X^* = X^{\prime} = \widetilde{\Lambda_{\psi}}$ such that 
$\| v_n \|_{X^{\prime}} = 1, v_n \rightarrow 0$ weakly in $X^{\prime}$ satisfying the condition
\begin{equation} \label{4.13}
\langle u_n, v_n \rangle : = \int_0^{\infty} u_n(t) v_n(t) \, dt \geq \delta,
\end{equation}
for some $\delta > 0$ and all $n \in \mathbb N$. It is easy to see that $u_n \chi_{[a, b]} \rightarrow 0$ weakly in $X$ for every 
$0 < a < b < \infty$. In fact, if $v \in X^{\prime}$, then $\langle u_n \chi_{[a, b]}, v \rangle = \langle u_n, v \chi_{[a, b]} \rangle \rightarrow 0$ 
as $n \rightarrow \infty$, because of $ v \chi_{[a, b]} \in X^{\prime}$. Moreover (see Proposition \ref{Pro1n}),  
$$
X_{|_{ [a, b]}}: = \{u \in X: \supp u \subset [a, b] \} = L_1[a, b]
$$
with equivalence of norms, and therefore $u_n \chi_{[a, b]} \rightarrow 0$ weakly in $L_1[a, b]$. Setting 
$\alpha_n(u): = \int_a^b u(t) v_n(t) \, dt, n \in \mathbb N$, we see that $\alpha_n \in (L_1[a, b])^* = L_{\infty}[a, b] = X^{\prime}/M$, 
where $M = \{v \in X^{\prime}: \langle u, v \rangle  = 0$ for all $u \in L_1[a, b]$\}. Then,
$$
(L_{\infty}[a, b])^* = (X^{\prime}/M)^* = \{ F \in (X^{\prime})^*: F(v) = 0 ~ {\rm for ~ all} ~ v \in M\},
$$
and therefore $(L_{\infty}[a, b])^* \subset (X^{\prime})^*$. Hence, from the fact that $ v_n \rightarrow 0$ weakly in $X^{\prime}$ it follows 
that $\alpha_n \rightarrow 0$ weakly in $L_{\infty}[a, b]$. Since $L_1[a, b]$ has the Dunford-Pettis property, as a result we have
$$
\alpha_n (u_n \cdot \chi_{[a, b]}) = \int_a^b u_n(t) v_n(t)\, dt \rightarrow 0 ~ {\rm as} ~ n \rightarrow \infty,
$$
for every $0<a<b<\infty.$ Thus, taking into account (\ref{4.13}), we can select subsequences of $\{u_n\}$ 
and $\{v_n\}$ (we will denote them still by $\{u_n\}$ and 
$\{v_n\}$) such that at least one of the following conditions holds:

(a) there exists a sequence $\{b_n\}_{n = 1}^{\infty}$ with $b_1 > b_2 > \ldots, \lim_{n \rightarrow \infty} b_n = 0$ and
$$
\int_0^{b_n} u_n(t) v_n(t) \, dt \geq \frac{3 \delta}{4}, ~ n \in \mathbb N;
$$

(b) there exists a sequence $\{a_n\}_{n = 1}^{\infty}$ with $a_1 < a_2 < \ldots, \lim_{n \rightarrow \infty} a_n = \infty$ and
$$
\int_{a_n}^{\infty} u_n(t) v_n(t) \, dt \geq \frac{3 \delta}{4}, ~ n \in \mathbb N.
$$
Since $\int_0^{\infty} |u_n(t) v_n(t)| \, dt < \infty$ for every $n \in \mathbb N$, passing to further subsequences, we can find a sequence 
of intervals $I_n = [a_n , b _n], n = 1, 2, \ldots$ such that either $b_1 > a_1 > b_2 > a_2 > \ldots, \lim_{n \rightarrow \infty} b_n = 0$, or 
$a_1 < b_1 < a_2 < b_2 < \ldots, \lim_{n \rightarrow \infty} a_n = \infty$, for which
\begin{equation} \label{4.14}
\int_{I_n}u_n(t) v_n(t) \, dt \geq \frac{\delta}{2}, ~n \in \mathbb N.
\end{equation}
Now, we set $f_n: = u_n \cdot \chi_{I_n}, n = 1, 2, \ldots$. From (\ref{4.14}) it follows that $\{ f_n\}$ is a semi-normalized sequence 
in $X$. So, applying Corollary \ref{Cor9}, we can extract a subsequence (denoted still by $\{f_n\}$), which is equivalent in $X$ to the 
canonical basis in $c_0$. Therefore, $f_n \rightarrow 0$ weakly in the closed linear span $[f_n]$ (and in $X$). Clearly, 
$\theta_n(f): = \int_0^{\infty} f(t) v_n(t) \, dt$ is a bounded linear functional on $[f_n]$. As above, $[f_n]^{**} \subset (X^{\prime})^*$. 
Therefore, since $v_n \rightarrow 0$ weakly in $X^{\prime}$, we have $\theta_n \rightarrow 0$ weakly in $[f_n]^*$. Noting that the 
subspace $[f_n]$ is isomorphic to $c_0$, which has the Dunford-Pettis property, we obtain
$$
\int_{I_n}u_n(t) v_n(t) \, dt = \theta_n(f_n) \rightarrow 0 ~ {\rm as} ~ n \rightarrow \infty,
$$
which contradicts (\ref{4.14}). Thus, the proof is completed.
\endproof

As we know, the condition $0 < p_{\varphi} \leq q_{\varphi} < 1$ guarantees that $C\Lambda_{\varphi}$ on 
$[0, \infty)$ is a weighted $L_1$-space up to equivalence of norms (see \cite[Theorem 4.4]{DS07} and \cite{LM15a}). 
It turns out that similar result holds also for the Ces\`aro-Lorentz spaces on $[0, 1]$.

%%%%%%%%%%%%%%%%%%%%%%%%%%%%%% Theorem 8 (now 10)
\begin{theorem} \label{Thm8new}
Let $\varphi$ be an increasing concave function on $[0,1]$ such that $0 < p_{\varphi}^0 \leq q_{\varphi}^0 < 1$. Then 
$$
C\Lambda_{\varphi}[0,1] = L_1(w), ~ {\it with} ~ w(t) = \int_0^{1-t}\frac{\varphi'(s)}{t+s} \, ds.
$$
\end{theorem}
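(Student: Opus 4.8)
The plan is to prove directly the two-sided estimate $\|f\|_{C\Lambda_\varphi}\approx\int_0^1|f(t)|\,w(t)\,dt$, reducing throughout to $f\ge 0$. Changing variables $s=x-t$, rewrite the weight as $w(t)=\int_t^1\frac{\varphi'(x-t)}{x}\,dx$; it is positive, decreasing, and finite on $(0,1)$. The only structural facts I will use are the Fatou property of $\Lambda_\varphi$ and three consequences of $p_\varphi^0>0$: that $\varphi(0^+)=0$ (hence $d\varphi=\varphi'\,ds$ and $\|g\|_{\Lambda_\varphi}=\int_0^1 g^*\varphi'$), that $\varphi(2t)-\varphi(t)\gtrsim\varphi(t)$ for $0<t\le\frac12$, and that $\varphi'(x/2)\le D\,\varphi'(x)$ for $0<x\le 1$ (a ``doubling'' property of $\varphi'$, obtained from $\varphi'(x/2)\le\frac4x\varphi(x/2)$ together with $\varphi'(x)\gtrsim\varphi(x)/x$).

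\emph{Upper estimate.} Since $\Lambda_\varphi$ has the Fatou property, $\|f\|_{C\Lambda_\varphi}=\|Cf\|_{\Lambda_\varphi}=\sup\{\int_0^1(Cf)h:\ h\ge 0,\ \|h\|_{(\Lambda_\varphi)'}\le 1\}$, and by Tonelli $\int_0^1(Cf)h=\int_0^1 f\cdot C^*h$, where $C^*h(t)=\int_t^1 h(x)/x\,dx=\int_0^1 h\,\phi_t$ with $\phi_t:=\frac1x\chi_{[t,1]}$. The generalized H\"older inequality \eqref{HRI} gives $C^*h(t)\le\|h\|_{(\Lambda_\varphi)'}\,\|\phi_t\|_{(\Lambda_\varphi)''}=\|h\|_{(\Lambda_\varphi)'}\,\|\phi_t\|_{\Lambda_\varphi}$. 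A direct computation of the decreasing rearrangement gives $\phi_t^*(s)=\frac1{s+t}\chi_{(0,1-t)}(s)$, whence $\|\phi_t\|_{\Lambda_\varphi}=\int_0^{1-t}\frac{\varphi'(s)}{s+t}\,ds=w(t)$. Therefore $\int_0^1 f\,C^*h\le\int_0^1 f\,w$ for every admissible $h$, i.e. $\|f\|_{C\Lambda_\varphi}\le\|f\|_{L_1(w)}$; this half needs only the Fatou property.

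\emph{Lower estimate.} For every $\psi$ equimeasurable with $\varphi'$ on $[0,1]$, the Hardy--Littlewood inequality gives $\int_0^1(Cf)\psi\le\int_0^1(Cf)^*\varphi'=\|Cf\|_{\Lambda_\varphi}$, hence $\|f\|_{C\Lambda_\varphi}\ge\int_0^1 f\,C^*\psi$. Apply this with $\psi_1=\varphi'$ and with its reflection $\psi_2(x)=\varphi'(1-x)$ and average: $\|f\|_{C\Lambda_\varphi}\ge\frac12\int_0^1 f\,(C^*\psi_1+C^*\psi_2)$, so it suffices to show $C^*\psi_1(t)+C^*\psi_2(t)\gtrsim w(t)$ on $(0,1)$. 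For $t\ge\frac12$ one has $C^*\psi_2(t)\ge\int_t^1\varphi'(1-x)\,dx=\varphi(1-t)$, while $w(t)\le\frac1t\varphi(1-t)\le 2\varphi(1-t)$. For $t\le\frac12$ one shows $C^*\psi_1(t)=\int_t^1\frac{\varphi'(x)}{x}\,dx\gtrsim w(t)$ by splitting $w(t)=\int_t^{2t}+\int_{2t}^1$: the first piece is $\le\varphi(t)/t\lesssim\int_t^{2t}\frac{\varphi'(x)}{x}\,dx$ (using $\varphi(2t)-\varphi(t)\gtrsim\varphi(t)$), and the second is $\le\int_{2t}^1\frac{\varphi'(x/2)}{x}\,dx\lesssim\int_{2t}^1\frac{\varphi'(x)}{x}\,dx$ (using the doubling of $\varphi'$). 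Combining the two estimates yields $C\Lambda_\varphi[0,1]=L_1(w)$ with equivalent norms. The remaining hypothesis $q_\varphi^0<1$ serves to guarantee, via $\int_0^a w(t)\,dt\lesssim\varphi(a)$, that $C$ is bounded on $\Lambda_\varphi$, so that $C\Lambda_\varphi$ falls under the abstract Ces\`aro-space framework of Section~2; together with boundedness of $C^*$ on $\Lambda_\varphi$ (a consequence of $p_\varphi^0>0$), this is also what one needs in order to invoke Theorem~A(ii) and read off the K\"othe dual.

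The main obstacle is the lower bound near the right endpoint: the obvious test function $\psi_1=\varphi'$ produces $C^*\varphi'(t)=\int_t^1\varphi'(x)/x\,dx$, which can be far smaller than $w(t)$ as $t\to 1$, so one is forced to bring in the reflected test function $\psi_2$ and average --- this is precisely where the geometry of $[0,1]$ (rather than $[0,\infty)$) enters, and why the weight $w$ has the form it does. A secondary technical point is the extraction of the two doubling properties from $p_\varphi^0>0$. An alternative route is to run the whole argument through Theorem~A(ii) and Fatou duality --- computing the Tandori space $\widetilde{(\Lambda_\varphi)'(w_1)}$ and identifying it with $L_\infty(1/w)$, then dualizing once more --- but the weight computation there is essentially the rearrangement computation of $\phi_t$ above.
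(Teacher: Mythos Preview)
Your upper estimate is exactly the paper's: dualize against $(\Lambda_\varphi)'$, apply Fubini, and compute $\|\tfrac1x\chi_{[t,1]}\|_{\Lambda_\varphi}=w(t)$.

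Your lower estimate, however, is genuinely different and in some ways cleaner. The paper proves $C\Lambda_\varphi\hookrightarrow L_1(w)$ by the dual route you allude to at the end: it invokes Theorem~A(ii), so that the desired embedding is equivalent to $L_\infty(1/w)\hookrightarrow\widetilde{M_\psi(1/(1-t))}$, and then checks that $vw\in M_\psi$ (with $\psi=t/\varphi$, $v=1/(1-t)$) by splitting $vw$ at $t=1/2$ and estimating each piece separately via Fubini and an integration by parts. That computation uses \emph{both} index hypotheses substantively --- $q_\varphi^0<1$ enters in bounding $\int_0^{1-x}\frac{\varphi(s)}{(s+x)s}\,ds$ near $s$ large relative to $x$, and $p_\varphi^0>0$ in handling $w_1$ via $q_\psi^0=1-p_\varphi^0<1$. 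Your Hardy--Littlewood argument with the two test functions $\varphi'$ and $\varphi'(1-\cdot)$ is more direct and, as you observe, uses essentially only $p_\varphi^0>0$ for the norm equivalence; the reflection trick is a nice way to capture the endpoint behavior that forces the paper through the weighted Marcinkiewicz estimate.

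One small gap: the pointwise doubling $\varphi'(x/2)\le D\varphi'(x)$ that you extract from $p_\varphi^0>0$ need not hold for $x$ near~$1$, since $\varphi'(1)$ can vanish (take e.g.\ $\varphi(t)=\sqrt{t(2-t)}$, which has $p_\varphi^0=q_\varphi^0=1/2$ but $\varphi'(1)=0$); correspondingly your literal inequality $\int_{2t}^1\frac{\varphi'(x/2)}{x}\,dx\lesssim\int_{2t}^1\frac{\varphi'(x)}{x}\,dx$ fails for $t$ close to $1/2$. The fix is immediate: substitute $y=x/2$ to get $\int_{2t}^1\frac{\varphi'(x/2)}{x}\,dx=\int_t^{1/2}\frac{\varphi'(y)}{y}\,dy\le C^*\psi_1(t)$, which is all you need and avoids the doubling claim entirely.
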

%%%%%%%%%%%%%%%%%%%%%%
\proof By duality and Fubini's theorem, we have 
\begin{eqnarray*}
\|f\|_{C\Lambda_{\varphi}} 
&=&
\sup_{\| g\|_{\Lambda_{\varphi}' \leq 1}} \int_0^1C|f|(x) |g(x)| \, dx =
\sup_{\| g \|_{\Lambda_{\varphi}' \leq 1}} \int_0^1|g(x)| \Big(\frac{1}{x} \int_0^x |f(t)|\, dt \Big) dx \\
&=&
\sup_{\| g \|_{\Lambda_{\varphi}' \leq 1}} \int_0^1|f(t)| \Big(\int_t^1\frac{|g(x)|}{x} dx \Big) dt \leq
\int_0^1|f(t)| \, \| h_t \|_{\Lambda_{\varphi}} dt
\end{eqnarray*}
where $h_t(x) = \frac{1}{x} \chi_{[t, 1]}(x)$. Then
$$
\|h_t\|_{\Lambda_{\varphi}} = \int_0^1(h_t)^*(s)\varphi^{\prime}(s) \, ds = \int_0^{1-t} \frac{\varphi^{\prime}(s)}{s+t} \,ds = w(t)
$$
and consequently the above inequality means that $L_1(w) \overset{1}{\hookrightarrow } C\Lambda_{\varphi}$. 
In view of the conditions imposed on the indices $p_{\varphi}^0$ and $q_{\varphi}^0$ the operators $C$ and $C^*$
are bounded in $\Lambda_{\varphi}$ (see \cite[Chapter~II, \S\,8.6]{KPS82}). Therefore, the reverse inclusion
is equivalent, by duality (see Theorem A(ii)), to the following one
$$
L_{\infty}(1/w) \, {\hookrightarrow } \, \widetilde{M_{\psi}(v)},
$$
where $\psi(t)=\frac{t}{\varphi(t)}$ and $v(t)=\frac{1}{1-t}$. Thus, it is enough to check that $w \in \widetilde{M_{\psi}(v)}$, 
i.e.,
$$
\| w \|_{\widetilde{M_{\psi}(v)}} = \sup_{0 < t \leq 1}\frac{1}{\varphi(t)}\int_0^t (v\widetilde{w})^*(x) \, dx<\infty.
$$
First of all notice that $w$ is decreasing, so we have $\widetilde{w} = w$. We divide the function $v\cdot w$ into two parts, namely,
$$
v(t) w(t)=\frac{w(t)}{1-t}\chi_{[0,1/2]}(t)+\frac{w(t)}{1-t}\chi_{[1/2,1]}(t) = w_0(t)+w_1(t).
$$
Thus, we need only to check that $w_0$ and $w_1$ belong to the space $M_{\psi}$. 

By Fubini's theorem, we have
\begin{eqnarray*}
\int_0^x w^*(t) \, dt 
&=&
\int_0^x \Big(\int_0^{1-t}\frac{\varphi'(s)}{t+s} \,dt \Big) ds \\
&=& 
\int_0^{1-x} \Big(\int_0^{x}\frac{\varphi'(s)}{t+s}dt \Big) ds + \int_{1-x}^{1}\Big(\int_0^{1-s}\frac{\varphi'(s)}{t+s}dt\Big)ds\\
&=&
\int_0^{1-x}\varphi'(s)\ln\frac{x+s}{s}ds + \int_{1-x}^1\varphi'(s)\ln\frac{1}{s}ds.
\end{eqnarray*}
Then, since $0< x <1/2$, the second summand can be estimated thanks to monotonicity and subadditivity of the concave function 
$\varphi$ as follows 
$$
\int_{1-x}^1\varphi'(s)\ln\frac{1}{s}ds  \leq \ln 2 \int_{1-x}^1\varphi'(s) ds = \ln 2 \, [\varphi(1)-\varphi(1-x)] \leq \ln2\, \varphi(x).
$$
While for the first one, integrating by parts, we get
$$
\int_0^{1-x}\varphi'(s)\ln\frac{x+s}{s} ds = 
\varphi(1-x) \ln \frac{1}{1-x} - \lim_{s \rightarrow 0^+} \varphi(s) \ln \frac{x+s}{s} 
$$
$$
+ x \int_0^{1-x}\frac{\varphi(s)}{(s+x) s} \,ds \leq \varphi (1) \frac{x}{1-x} + x \int_0^{1-x} \frac{\varphi(s)}{(s+x) s} \, ds.
$$
Since $0 < x< 1/2$, then by concavity of $\varphi$, we get
$$
\varphi (1) \frac{ x}{1-x} \leq 2\, \varphi (1) x \leq 2\, \varphi(x).
$$
Moreover, for some $0< a < 1, A \geq 1$ and all $0< x<1, t > 0$ we have $\varphi(tx) \leq A t^{a} \varphi(x)$ and consequently, 
putting $s = t x$, we obtain
\begin{eqnarray*}
\int_0^{1-x}\frac{\varphi(s)}{(s+x)s} ds
&=&
\frac{1}{x} \int_0^{\frac{1-x}{x}}\frac{\varphi(t x)}{(1+t) t} dt
\leq \frac{A}{x} \int_0^{\frac{1-x}{x}}\frac{t^{a} \varphi(x)}{(1+t) t}dt \\
&\leq& 
A  \frac{\varphi(x)}{x} \int_0^{\infty}\frac{t^{a-1}}{1+t} dt = B \, \frac{\varphi(x)}{x}.
\end{eqnarray*}
Thus, for $0 < x< 1/2$,
\begin{eqnarray*}
\int_0^x (w_0)^*(t) dt 
&\leq&
2 \int_0^{x} (w \chi_{(0, 1/2]})^*(t) dt \leq 2 \int_0^{x} w^*(t) dt \\
&\leq&
2 (2 + B + \ln 2) \, \varphi(x), 
\end{eqnarray*}
whence $w_0\in M_{\psi}$. 

Let us consider now $w_1$. For $1/2 < t \leq 1$ we have
$$
w_1(t) = \frac{1}{1-t}\int_0^{1-t}\frac{\varphi'(s)}{t+s} \, ds \leq \frac{2}{1-t}\int_0^{1-t}\varphi'(s) \, ds = 2 \, \frac{\varphi(1-t)}{1-t}.
$$
Since the function $\frac{\varphi(1-t)}{1-t}$ is increasing, we conclude that $w_1^*(t) \leq 2 \, \frac{\varphi(t)}{t},$ $0<t\leq 1$. 
In consequence, from ({\ref{estimate2.13}) and the condition $q_{\psi}^0 = 1 - p_{\varphi}^0 < 1$ it follows that
$$
\int_0^{x} w_1^*(t) dt \leq 2 \int_0^{x} \frac{\varphi(t)}{t} dt = 2 \, \int_0^{x} \frac{1}{\psi(t)} dt \leq 2 \, C  \frac{x}{\psi(x)} 
= 2 \, C \varphi(x),
$$
which finishes the proof.
\endproof

Of course, from Theorem \ref{Thm8new} it follows that the space $C\Lambda_{\varphi}[0,1]$ has 
the Dunford-Pettis property whenever $0 < p_{\varphi}^0 \leq q_{\varphi}^0 < 1$. 
Let us prove analogous result for separable Ces\`aro-Marcinkiewicz spaces.
%Using another Bourgain's result from his paper \cite{Bo81} and Theorem \ref{Thm5}, we can prove analogous theorem for 
%separable Ces\`aro-Marcinkiewicz spaces on $[0, 1]$.

%%%%%%%%%%%%%%%%%%%%%%%%%%%%%% Theorem 9 (now 11)
\begin{theorem} \label{Thm9new}
Let $\varphi$ be an increasing concave function on $[0,1]$ such that $\lim_{t \rightarrow 0^+} \varphi(t) = 0$ and $q_{\varphi}^0 < 1$. 
Then the space $C(M_{\varphi}^0)[0,1]$ has the Dunford-Pettis property.
\end{theorem}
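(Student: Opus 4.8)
The plan is to imitate the proof of Theorem~\ref{Thm7n}, the only genuinely new ingredient being a version of Proposition~\ref{Prop8} adapted to the interval $[0,1]$. Write $X=C(M_\varphi^0)[0,1]$. Since $M_\varphi^0$ is order continuous, so is $X$ (cf.~\cite{LM15b}), hence $X^*=X'$; and $q_\varphi^0<1$ makes $C$ bounded on $M_\varphi^0$, so by the proof of Proposition~\ref{Pro1n} one has $X_{|[a,b]}=L_1[a,b]$ (and therefore also $X'_{|[a,b]}=L_\infty[a,b]$) with equivalent norms for every $0<a<b<1$. Suppose, for a contradiction, that $X$ fails the Dunford--Pettis property, and take $u_n\to0$ weakly in $X$ with $\|u_n\|_X=1$ and $v_n\to0$ weakly in $X^*=X'$ with $\|v_n\|_{X'}=1$ such that $\langle u_n,v_n\rangle=\int_0^1u_nv_n\ge\delta$ for all $n$ and some $\delta>0$.

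First I would localize the mass to the endpoints of $[0,1]$. Fix $0<a<b<1$. From $X'_{|[a,b]}=L_\infty[a,b]$ and weak nullity of $u_n$ in $X$ we get $u_n\chi_{[a,b]}\to0$ weakly in $L_1[a,b]$; since the restriction $v\mapsto v\chi_{[a,b]}$ maps $X'$ boundedly onto $L_\infty[a,b]$, its adjoint sends $(L_\infty[a,b])^*$ into $(X')^*=X^{**}$, so weak nullity of $v_n$ in $X^*=X'$ gives $v_n\chi_{[a,b]}\to0$ weakly in $L_\infty[a,b]=(L_1[a,b])^*$. The Dunford--Pettis property of $L_1[a,b]$ then forces $\int_a^bu_nv_n\to0$. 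Combining this with $\int_0^1u_nv_n\ge\delta$ and a standard gliding--hump argument (using $u_nv_n\in L_1[0,1]$), after passing to subsequences we may assume there are pairwise disjoint intervals $I_n=[a_n,b_n]\subset(0,1)$ with either $b_1>a_1>b_2>a_2>\dots$ and $b_n\to0$, or $a_1<b_1<a_2<b_2<\dots$ and $b_n\to1$, such that $\int_{I_n}u_nv_n\ge\delta/2$ for all $n$.

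The heart of the matter is a $[0,1]$-analogue of Proposition~\ref{Prop8}: for $X=C(M_\varphi^0)[0,1]$ with $\lim_{t\to0^+}\varphi(t)=0$ and $q_\varphi^0<1$, and for any sequence $\{I_n\}$ of one of the two above types, there exist a subsequence $\{n_k\}$ and $C>0$ with $\max_{k\le m}\|x_{n_k}\|_X\le\|\sum_{k\le m}x_{n_k}\|_X\le C\max_{k\le m}\|x_{n_k}\|_X$ for all $\{x_n\}\subset X$ with $\supp x_n\subset I_n$. The left inequality is immediate from disjointness of supports, and one may assume $x_n\ge0$. For intervals shrinking to $0$ the argument is purely local at the origin and carries over verbatim from case~\eqref{44} in the proof of Proposition~\ref{Prop8}, using that $q_\varphi^0<1$ is equivalent to the estimate~\eqref{estimate2.13} on $(0,1]$ together with the bound $\int_a^bx\le C_1\|x\|_X\psi(b)$ for $x$ supported on $[a,b]\subset(0,\tfrac12)$ (with $\psi(t)=t/\varphi(t)$), which replaces~\eqref{48} and follows by a direct computation of $\|\chi_{[a,b]}\|_{X'}=\sup\{\int_a^bf:\|Cf\|_{M_\varphi}\le1,\ f\ge0\}$ from $Cf\ge(\int_a^bf)\,\tfrac1y\chi_{[b,1]}$ and $\|\tfrac1y\chi_{[b,1]}\|_{M_\varphi}\gtrsim1/\psi(b)$. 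For intervals accumulating at $1$ the bookkeeping must be modified: since now $Cx\ge(\int_a^bx)\,\chi_{[b,1]}$ (because $\tfrac1y\ge1$ on $[b,1]$ and one cannot go beyond $1$), the replacement for~\eqref{48} is the elementary bound $\int_a^bx\le\|x\|_X/\varphi(1-b)$ for $x$ supported on $[a,b]\subset(0,1)$. I would then pass greedily to a subsequence along which $1-b_k\to0$ so fast that $1-a_{k+1}\le\tfrac12(1-b_k)$, $\sum_{i<k}\varphi(1-b_i)^{-1}\le C\varphi(1-b_k)^{-1}$ and $\sum_{i>k}\psi(1-b_i)\le\psi(1-b_k)$ (the analogue of condition~\eqref{4.12}), decompose $\tfrac1t\int_0^t\sum_{k\le m}x_k$ into the three pieces of case~\eqref{45} (the contribution of completed intervals lying to the left, the partial integral, and the Ces\`aro tail running up to $1$), and dominate each of them by $C\max_i\|x_i\|_X$ times the step function $g(t)=\sum_m\varphi(1-b_m)^{-1}\chi_{[b_m,a_{m+1}]}(t)$, whose Marcinkiewicz norm is $\lesssim1$ by a short computation: $\|g\|_{M_\varphi}=\sup_m\frac{\varphi(1-b_m)}{1-b_m}\int_0^{1-b_m}g^*\approx\sup_m\psi(1-b_m)^{-1}\sum_{j\ge m}\psi(1-b_j)\le1$. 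I expect this endpoint estimate near $t=1$ — where $X_{|[a,1]}$ is genuinely not $L_1[a,1]$ (spikes near $1$ have vanishing $X$-norm), so the passage to a rapidly converging subsequence is indispensable — to be the main technical obstacle.

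Granting this lemma, the argument of Corollary~\ref{Cor9} yields, for the disjoint sequence $f_n:=u_n\chi_{I_n}$ (which is semi-normalized since $\delta/2\le\int_{I_n}u_nv_n\le\|f_n\|_X\|v_n\chi_{I_n}\|_{X'}\le\|f_n\|_X\le\|u_n\|_X=1$), a subsequence $\{f_{n_k}\}$ equivalent in $X$ to the unit vector basis of $c_0$; in particular $f_{n_k}\to0$ weakly in $[f_{n_k}]\subset X$. The functionals $\theta_n(f):=\int_0^1fv_n$ are bounded on $[f_{n_k}]$, and since $[f_{n_k}]^{**}$ embeds into $X^{**}=(X')^*$ and $v_n\to0$ weakly in $X^*=X'$, we get $\theta_n\to0$ weakly in $[f_{n_k}]^*$. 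As $[f_{n_k}]\simeq c_0$ has the Dunford--Pettis property, $\theta_{n_k}(f_{n_k})=\int_{I_{n_k}}u_{n_k}v_{n_k}\to0$, contradicting $\int_{I_n}u_nv_n\ge\delta/2$. Hence $X=C(M_\varphi^0)[0,1]$ has the Dunford--Pettis property.
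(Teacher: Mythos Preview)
Your approach is genuinely different from the paper's. The paper does \emph{not} attempt a $[0,1]$-version of Proposition~\ref{Prop8}; instead it avoids the endpoint $t=1$ altogether. Concretely, for $k\ge 2$ it sets $X_k:=\{f\in C(M_\varphi^0)[0,1]:\supp f\subset[0,1-1/k]\}$, notes that by order continuity $\bigcup_k X_k$ is dense in $C(M_\varphi^0)[0,1]$, and observes that each $X_k$ is a complemented subspace of $C(M_{\varphi_1}^0)[0,\infty)$ for a suitable concave extension $\varphi_1$ of $\varphi$ with $q_{\varphi_1}<1$. Then Theorem~\ref{Thm5} gives $X_k\simeq(\bigoplus_{n}L_1[0,1])_{l_\infty}$, so $(\bigoplus_k X_k)_{l_\infty}$ has this same isomorphic type and hence the Dunford--Pettis property by Bourgain. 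Finally, Bourgain's abstract transfer principle \cite[Proposition~2]{Bo81} pushes the property down to the closure of $\bigcup_k X_k$, i.e., to $C(M_\varphi^0)[0,1]$. This buys brevity and completely sidesteps the ``main technical obstacle'' you identified near $t=1$.

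Your route is more in the spirit of Theorem~\ref{Thm7n} and, if carried out, would give a proof independent of Bourgain's machinery. The part near $t=0$ indeed transfers without change. However, your treatment of intervals accumulating at $t=1$ is not yet a proof: the dominating function $g(t)=\sum_m\varphi(1-b_m)^{-1}\chi_{[b_m,a_{m+1}]}(t)$ does not cover the intervals $[a_m,b_m]$ on which $h=C(\sum x_k)$ is nonzero, and your computation of $\|g\|_{M_\varphi}$ takes the supremum only at the points $t=1-b_m$ rather than over all $t\in(0,1]$. More seriously, for a \emph{finite} sum $\sum_{k\le m}x_k$ the Ces\`aro tail on $(b_m,1]$ contributes a piece of height $\asymp\varphi(1-b_m)^{-1}$ on a set of measure $1-b_{m-1}$, and the resulting term $\varphi(1-b_{m-1})/\varphi(1-b_m)$ in the $M_\varphi$-norm must be bounded uniformly in $m$; your listed subsequence conditions force this ratio to be at least $2$ but say nothing about an upper bound, and since you can only \emph{thin} the given sequence $\{b_n\}$ (not insert new points), a two-sided control of this ratio is not automatic. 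The argument may well be completable, but as written the endpoint case is a genuine gap rather than routine bookkeeping.
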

%%%%%%%%%%%%%%%%%%%%%%
\proof For every $k = 2, 3, \ldots$ we set
$$
X_k: = {CM_{\varphi}^0}_{ \big |_{[0, 1-1/k]}} = \{f \in CM_{\varphi}^0: \supp f \subset [0, 1 - \frac{1}{k}] \}.
$$
Since $CM_{\varphi}^0$ is an order continuous space, the union $\bigcup_{k=2}^{\infty} X_k$ is dense in it. Moreover, from the 
definition of Ces\`aro spaces it follows that, for every $k = 2, 3, \ldots$, $X_k$ can be regarded as a complemented subspace of 
the space $CM_{\varphi_1}^0[0,\infty)$, where $\varphi_1$ is a concave extension of the function $\varphi$ to the semi-axis $[0, \infty)$ 
such that $q_{\varphi_1} < 1$. (Notice that $CX[0,1]$ is not a restriction of $CX[0,\infty)$ to the interval $[0,1]$. More precisely, 
similarly as in \cite[Remark 5]{AM09}, one can check that $CX[0, \infty) {\large _{|[0, 1]}} = CX[0, 1] \cap L_1[0,1]$).
Therefore, an inspection of the proof of Theorem \ref{Thm5} shows that 
$X_k \simeq (\bigoplus_{n \in \mathbb N} L_1[0, 1])_{l_\infty}$, whence the space $(\bigoplus_{k =2}^{\infty} X_k)_{l_\infty}$ is isomorphic 
to the latter $l_{\infty}$-sum as well. Thus, $(\bigoplus_{k =2}^{\infty} X_k)_{l_\infty}$ has the Dunford-Pettis property.
Finally, applying Proposition 2 from \cite{Bo81}, we conclude that $CM_{\varphi}^0$ also posseses the latter property, and the proof 
is completed.
\endproof

%%%%%%%%%%%%%%%%%%%%%%%%% Remark 2
\begin{remark} \label{Rem2} 
The assertion of Theorem \ref{Thm9new} cannot be deduced from Theorem \ref{Thm5}(i), using the above Bourgain's results,  
because of the difference in the duality results for Ces\`aro spaces for the cases of $[0,1]$ and $[0,\infty)$ (see Theorem A). 
We would like to mention here also that we couldn't identify conditions under which the space $CM_{\varphi}[0, 1]$ has the 
Dunford-Pettis property.
\end{remark}

Now, we present some negative results related to the Dunford-Pettis property of Ces\`aro and Tandori function spaces. 

%%%%%%%%%%%%%%%%%%%%%%%%%%%%%%%%% Theorem 10 (now 12)
\begin{theorem}\label{Thm7new}
Let $X$ be a reflexive symmetric function space on $I$ such that the operator $C$ is bounded on $X$.
 
(i) If $I = [0, \infty)$, then the spaces $CX$ and $\widetilde{X^{\prime}}$ do not have the Dunford-Pettis property. 

(ii) If $I = [0,1], X$ has the Fatou property and the operator $C^*$ is bounded on $X$, then the spaces
$CX$ and $ \widetilde{X^{\prime}}$ do not have the Dunford-Pettis property. 
\end{theorem}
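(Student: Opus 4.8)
The plan is to show, in every case, that the space lacks the Dunford--Pettis property by exhibiting a weakly compact operator that is not completely continuous, along the lines of Theorem~\ref{Thm4}.

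\textit{The spaces $CX$.} Since $|Cf|\le C|f|$ pointwise, $C$ maps $CX$ into $X$ with $\|C\|_{CX\to X}\le 1$; as $X$ is reflexive this makes $C\colon CX\to X$ weakly compact. So it suffices to check that $C$ is not completely continuous, i.e.\ to produce a normalized weakly null sequence $(f_n)$ in $CX$ with $\inf_n\|Cf_n\|_X>0$. I would take $f_n=c_n\chi_{[0,a_n]}$ with $a_n\downarrow 0$ and $c_n:=\|C\chi_{[0,a_n]}\|_X^{-1}$, which is finite and positive because $C$ is bounded on $X$; since $f_n\ge 0$ this gives $\|f_n\|_{CX}=\|Cf_n\|_X=1$. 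To see $f_n\to 0$ weakly in $CX$: reflexivity makes $X$, hence also $CX$, order continuous, so by Theorem~A(i) when $I=[0,\infty)$, and by Theorem~A(ii) when $I=[0,1]$ (here the hypothesis that $C^*$ is bounded enters), every functional on $CX$ is integration against some $g\in\widetilde{X'}$, resp.\ $g\in\widetilde{X'(w_1)}$. For such $g$ one has $|g|\le\widetilde g$ a.e., and when $I=[0,1]$ also $\widetilde g\,\chi_{[0,a_n]}\le(\widetilde g\,w_1)\chi_{[0,a_n]}\in X'$ since $w_1\ge 1$; combining the generalized H\"older inequality \eqref{HRI} with the elementary bound $C\chi_{[0,a_n]}\ge\chi_{[0,a_n]}$ (so that $c_n\varphi_X(a_n)\le 1$) gives
\[
|\langle g,f_n\rangle|=c_n\Big|\int_0^{a_n}g(t)\,dt\Big|\le c_n\,\varphi_X(a_n)\,\|\widetilde g\,\chi_{[0,a_n]}\|_{X'}\le\|\widetilde g\,\chi_{[0,a_n]}\|_{X'}.
\]
As $X'$ is order continuous (reflexivity again) and $[0,a_n]\downarrow\emptyset$, the right side tends to $0$, so $f_n\to 0$ weakly and $CX$ lacks the Dunford--Pettis property for both $I$.

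\textit{The space $\widetilde{X'}$ on $[0,\infty)$.} By Theorem~A(i), $\widetilde{X'}=(CX)^*$; since $CX$ lacks the Dunford--Pettis property and the dual of a space without that property cannot have it, $\widetilde{X'}$ lacks it as well.

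\textit{The space $\widetilde{X'}$ on $[0,1]$.} This is the delicate point, because Theorem~A(ii) identifies $(CX)^*$ with $\widetilde{X'(w_1)}$, not with $\widetilde{X'}$. I would first observe that $X'$ also satisfies the hypotheses of~(ii): it is reflexive and symmetric with the Fatou property, $C$ is bounded on $X'$ (equivalently $C^*$ on $X''=X$), and $C^*$ is bounded on $X'$ (equivalently $C$ on $X$); hence the part of~(ii) already proved shows that $CX'[0,1]$ lacks the Dunford--Pettis property. The remaining, and main, step is to transfer this failure to $\widetilde{X'}[0,1]$: the plan would be to realize $\widetilde{X'}[0,1]$, or a complemented subspace of it, as a dual of a piece of $CX'[0,1]$, or to identify it up to isomorphism with a space of the type $(\bigoplus_n L_1[0,1])_{l_\infty}$ as in the proofs of Theorem~\ref{Thm5}(ii) and Theorem~\ref{Thm6}, reconciling the weight $w_1$ with the fact that $w_1$ is bounded away from the endpoint $x=1$ and a Pe{\l}czy\'nski-type decomposition. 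I expect this weight bookkeeping in the $[0,1]$ case to be the only genuinely non-routine ingredient, everything else running exactly as in Theorem~\ref{Thm4}.
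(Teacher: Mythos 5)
Your treatment of $CX$ (on both $[0,\infty)$ and $[0,1]$) and of $\widetilde{X'}$ on $[0,\infty)$ is correct and is essentially the paper's argument: a normalized sequence of characteristic functions concentrated near $0$ is weakly null in $CX$ (the paper verifies this through the embedding $X'\subset M_{\varphi_{X'}}^0$, you through the H\"older--Rogers inequality and order continuity of $X'$ -- both work), the operator $C\colon CX\to X$ is weakly compact by reflexivity but not completely continuous, and on the half-line the Tandori space is dealt with by the duality $(CX)^*=\widetilde{X'}$ of Theorem A(i) together with the fact that a space whose dual has the Dunford--Pettis property has it itself.

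The genuine gap is exactly where you stopped: the space $\widetilde{X'}$ on $[0,1]$. What you offer there is a plan, not a proof, and both routes you sketch are problematic. Passing to $CX'[0,1]$ does not help, since duality then yields $(CX')^*=\widetilde{X(w_1)}$, not $\widetilde{X'}$; and an identification of $\widetilde{X'}[0,1]$ with a space of the form $(\bigoplus_{n} L_1[0,1])_{l_\infty}$ is both of the wrong shape (in this paper it is the Ces\`aro spaces $Ces_\infty$, $CM_\varphi$ that are $l_\infty$-sums of $L_1$'s, while Tandori spaces appear as $l_1$-sums of $L_\infty$'s, cf.\ Proposition \ref{Pro6}) and simply unavailable for a general reflexive symmetric $X$ -- those representations are special to weighted $L_1$/Lorentz/Marcinkiewicz situations. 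The paper's resolution is much simpler: localize near $0$, where the weight $w_1(x)=1/(1-x)$ is harmless. Fix $b<1$; on the band $\{f:\ \supp f\subset[0,b]\}$, which is invariant under $f\mapsto\widetilde f$, the norms of $\widetilde{X'(w_1)}$ and $\widetilde{X'}$ are equivalent, and this band is complemented in $\widetilde{X'}$ via $f\mapsto f\chi_{[0,b]}$. Your weakly null sequence $f_n$ lies in the corresponding band $Z=\{f\in CX:\ \supp f\subset[0,b]\}$ of $CX$, so $Z$ itself fails the Dunford--Pettis property; by Theorem A(ii) and the boundedness of the band projection, $Z^*$ identifies with the band of $\widetilde{X'(w_1)}$ supported in $[0,b]$, i.e.\ with a complemented subspace of $\widetilde{X'}$. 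Since the Dunford--Pettis property passes from a dual to the space and to complemented subspaces, $\widetilde{X'}[0,1]$ fails it. This localization is all the ``weight bookkeeping'' that is actually needed, and it is what the paper means by saying that near zero $\widetilde{X'(w_1)}$ coincides with $\widetilde{X'}$ so that the proof of part (i) can be repeated.
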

%%%%%%%%%%%%%%%%%%%%%%%%
\proof
(i) The proof is rather similar to the proof in the sequence case (Theorem \ref{Thm3}). Again it is sufficient to prove that the operator 
$C: CX \rightarrow X$ is not a Dunford-Pettis operator. Let us show that 
$x_n = \dfrac{1}{\varphi_X(1/n)} \, \chi_{[0, 1/n]}, \,n = 1, 2, \dots,$ is a weakly null sequence in $CX.$
Since $X$ is order continuous, it follows that $CX$ is also order continuous and by Theorem A(i) we obtain
$(CX)^*= (CX)' = \widetilde{X'}$. Thus, we need only to check that
\begin{equation}\label{43}
\langle y, x_{n}\rangle = \frac{1}{\varphi_{X}(1/n)}\int_0^{1/n} y(t)\,dt \to 0\;\;\mbox{as}\;\;n\to\infty,
\end{equation}
for every decreasing positive function $y \in X'$. Again $X'\subset M_{\varphi_{X'}},$
where $M_{\varphi_{X'}}$ is the Marcinkiewicz function space with the fundamental function $\varphi_{X'}$.
By reflexivity of $X$ we have $X' = (X')^0,$ and thus  
$$
X'\subset M_{\varphi_{X'}}^0 \subset\Big\{z = z(t): \lim_{t\rightarrow 0}\frac{\varphi_{X'}(t)}{t}\int_0^t z^*(s)\,ds = 0\Big\}.
$$
But $\varphi_X(t)=t/\varphi_{X'}(t)$ and \eqref{43} follows from the above embedding.
On the other hand, $Cx_n\ge x_n$ and so $\|Cx_n\|_X \ge\|x_n\|_X = 1$. This means that $CX$ does not have the Dunford-Pettis 
property. Moreover, since $\widetilde{X'} = (CX)^{\prime} = (CX)^*$, then $\widetilde{X'}$ fails to have the latter property as well.

(ii) The only difference of this case from the case of $[0,\infty)$ is the fact that now $(CX)^* = (CX)' = \widetilde{X'(1/(1-t))}.$ However, ``near zero'' the latter space coincides with
the space $\widetilde{X'}$ without a weight. Thus, we can repeat the same proof as in (i).
\endproof

As we know (see \cite[Theorem 4.4]{DS07} and Theorem~\ref{Thm8new}), the Ces\`aro-Lorentz spaces may coincide with weighted 
$L_1$-spaces and therefore may be isomorphic to the symmetric space $L_1.$ At the same time, it is not the case for Ces\`aro spaces $CX$ 
when $X$ is reflexive.

%%%%%%%%%%%%%%%%%%%%%%%%%% Corollary 7
\begin{corollary} \label{Cor2n}
If $X$ is a reflexive symmetric function space on $[0, 1]$ such that the operator $C$ is bounded on $X$, then $CX$ is not 
isomorphic to any symmetric space on $[0,1]$. 
\end{corollary}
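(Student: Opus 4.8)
The plan is to argue by contradiction, using that every Ces\`aro space $CX$ carries a complemented copy of $L_1[0,1]$ (Proposition~\ref{Pro1n}) together with the fact that, when $X$ is reflexive, $CX$ fails the Dunford-Pettis property (Theorem~\ref{Thm7new}).

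So suppose $CX \simeq Y$ for some symmetric space $Y$ on $[0,1]$. Since $X$ is reflexive it is order continuous, hence so is $CX$ (cf.\ \cite[Lemma~1]{LM15b}, recalled in Section~2), and therefore $CX$ is separable. As separability is an isomorphic invariant, $Y$ is separable; but a separable symmetric space on $[0,1]$ is automatically order continuous, so in particular $Y \neq L_\infty[0,1]$ and $Y^{*}=Y^{\prime}$. Next, by Proposition~\ref{Pro1n} the space $CX$ contains a complemented subspace isomorphic to $L_1[0,1]$, and carrying this copy across the isomorphism we see that $Y$ contains a complemented subspace isomorphic to $L_1[0,1]$ as well.

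Now comes the key input: a symmetric function space on $[0,1]$ possessing a complemented subspace isomorphic to $L_1[0,1]$ must itself be isomorphic to $L_1[0,1]$. Granting this, $Y \simeq L_1[0,1]$, hence $CX \simeq L_1[0,1]$, and so $CX$ inherits the Dunford-Pettis property from $L_1[0,1]$. On the other hand, Theorem~\ref{Thm7new}(ii) applies to $X$: being reflexive, $X$ has the Fatou property, and since $C$ is bounded on the reflexive symmetric space $X$ its Boyd indices are nontrivial, $0 < \alpha_X \leq \beta_X < 1$, so that $C^{*}$ is bounded on $X$ too (equivalently, $C$ is bounded on $X^{\prime}=X^{*}$). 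Hence Theorem~\ref{Thm7new}(ii) guarantees that $CX$ does \emph{not} have the Dunford-Pettis property, which is the desired contradiction.

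The step I expect to be the main obstacle is the structural fact invoked in the third paragraph (a complemented copy of $L_1[0,1]$ inside a rearrangement-invariant space on $[0,1]$ forces isomorphism with $L_1[0,1]$): I would either trace it to the isomorphic theory of rearrangement-invariant spaces or, failing a clean reference, prove it directly by combining the action of the projection with the $L_1$-type behaviour it imposes on the fundamental function of $Y$ and on disjointly supported sequences. Everything else in the argument is routine bookkeeping with facts already established earlier in the paper.
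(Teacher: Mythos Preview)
Your approach is essentially identical to the paper's: assume $CX\simeq Y$, transfer the complemented copy of $L_1[0,1]$ from Proposition~\ref{Pro1n} to $Y$, invoke the structural fact to force $Y\simeq L_1[0,1]$, and contradict Theorem~\ref{Thm7new}. The ``main obstacle'' you flagged is exactly what the paper resolves by citing Kalton \cite[Theorem~7.4]{Ka93}: every separable symmetric space on $[0,1]$ containing a complemented copy of $L_1[0,1]$ is isomorphic to $L_1[0,1]$. So there is no need for you to prove it directly.

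One technical caution: your justification for applying Theorem~\ref{Thm7new}(ii) asserts that boundedness of $C$ on a reflexive symmetric space forces $0<\alpha_X$, but this implication is not valid in general---boundedness of $C$ gives only $\beta_X<1$, and reflexivity alone does not control the lower Boyd index. The paper itself simply writes ``by Theorem~\ref{Thm7new}'' without checking the extra $C^*$-boundedness hypothesis of part~(ii), so this step is treated loosely there as well; if you want to be fully rigorous you should either add $C^*$ bounded to the hypotheses or observe that the proof of Theorem~\ref{Thm7new}(ii) for the $CX$ conclusion really only uses the behaviour of $(CX)'$ near zero, which does not require the full duality of Theorem~A(ii).
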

\proof
Suppose $CX$ is isomorphic to some symmetric space $Y$ on $[0, 1]$. Hence, by Proposition \ref{Pro1n}, $Y$ contains a complemented 
copy of $L_1[0,1]$. On the other hand, as Kalton proved in \cite[Theorem 7.4]{Ka93}, every separable symmetric 
space on $[0,1]$ that contains a complemented subspace isomorphic to $L_1[0,1]$ is isomorphic to $L_1[0,1]$ itself. Therefore, 
we conclude that $Y$ is isomorphic to $L_1[0,1]$. On the other hand, $CX \simeq Y$ cannot be isomorphic to $L_1[0,1]$, because by  
Theorem \ref{Thm7new} that space fails to have the Dunford-Pettis property.

%%%%%%%%%%%%%%%%%%%%%%%%%%%%%%%%%%%% %%%%%%%%%%% Section 5
\section{Isomorphism between $Ces_{\infty}$ and $ces_{\infty}$}
%%%%%%%%%%%%%%%%%%%%%%%

In \cite[Theorem 9]{AM09} (see also \cite[Theorem 7.2]{AM14}) it was proved that the spaces $Ces_{\infty}[0, 1]$ and 
$Ces_{\infty}[0, \infty)$ are isomorphic and there the question was raised if the spaces $Ces_{\infty}$ and $ces_{\infty}$ are isomorphic 
(cf. \cite[Problem 1]{AM09} and \cite[Problem 4]{AM14}). The following theorem solves this problem in affirmative.

%%%%%%%%%%%%%%%%%%%%%%%% Theorem 11 (now 13)
\begin{theorem}\label{mainthm}
The spaces $Ces_{\infty}$ and $ces_{\infty}$ are isomorphic.  
\end{theorem}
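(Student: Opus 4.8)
The plan is to identify each of the two spaces, up to isomorphism, with an $\ell_\infty$-sum of elementary building blocks and then to match these via the Pe{\l}czy\'nski decomposition method. First I would recall that, by Corollary \ref{Cor3new}, $ces_\infty\simeq(\bigoplus_{n=0}^{\infty}\ell_1^{2^n})_{\ell_\infty}$, whereas, taking $\varphi\equiv 1$ in Theorem \ref{Thm5}(ii) (so that $q_\varphi=0<1$) and using $Ces_\infty=CL_\infty=CM_{\varphi}$ together with the identification carried out in Theorem \ref{Thm6}, one has $Ces_\infty(I)\simeq(\bigoplus_{n=1}^{\infty}L_1[0,1])_{\ell_\infty}$ for both $I=[0,\infty)$ and $I=[0,1]$. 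So it is enough to prove that $X:=(\bigoplus_{n}L_1[0,1])_{\ell_\infty}$ and $Y:=(\bigoplus_{n}\ell_1^{2^n})_{\ell_\infty}$ are isomorphic.

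Next, $Y$ embeds isometrically as a $1$-complemented subspace of $X$: in the $n$-th coordinate send the canonical basis vector $e_k$ of $\ell_1^{2^n}$ to $2^n\chi_{[(k-1)2^{-n},\,k2^{-n})}\in L_1[0,1]$, which is an isometry onto the range of the conditional expectation $\mathbb{E}_n$ relative to the $\sigma$-algebra generated by the dyadic intervals of rank $n$; assembling these maps coordinatewise yields the embedding and the norm-one projection $\bigoplus_n\mathbb{E}_n$. For the reverse direction I would split $\mathbb{N}$ into infinitely many infinite blocks $\mathbb{N}=\bigsqcup_{m\in\mathbb{N}}A_m$. An inspection of the proof of Theorem \ref{Thm3} shows that the construction there goes through verbatim with the index set $\{0,1,2,\dots\}$ replaced by any infinite set $A\subset\mathbb{N}$: the only ingredients used are that $\|H_ng\|=\|g\|$ for all $n$ large enough with respect to the Haar depth of $g$, and that $\{T_n(Hf)\}$ is a uniformly integrable martingale converging to $f$ in $L_1$-norm, both of which persist along subsequences; moreover the resulting projection $x\mapsto H(Q(R_x))$ has norm at most $1$, independently of $A$. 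Hence for every $m$ there is a subspace $V_m$ of $(\bigoplus_{n\in A_m}\ell_1^{2^n})_{\ell_\infty}$ with $V_m\simeq L_1[0,1]$ and a projection $P_m$ onto $V_m$ with $\sup_m\|P_m\|<\infty$. Since $Y$ is naturally the $\ell_\infty$-sum over $m$ of the spaces $(\bigoplus_{n\in A_m}\ell_1^{2^n})_{\ell_\infty}$, the operator $\bigoplus_m P_m$ is a bounded projection of $Y$ onto $(\bigoplus_m V_m)_{\ell_\infty}\simeq(\bigoplus_m L_1[0,1])_{\ell_\infty}=X$, so $X$ is isomorphic to a complemented subspace of $Y$.

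Finally, $X$ is isometrically its own countable $\ell_\infty$-sum, $X\equiv(\bigoplus_{j=1}^{\infty}X)_{\ell_\infty}$, so $X$ satisfies the hypothesis of the Pe{\l}czy\'nski decomposition method; combined with the two complementation facts above this gives $X\simeq Y$, and hence $Ces_\infty\simeq ces_\infty$.

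The step I expect to require the most care is the reverse embedding $X\hookrightarrow Y$: one has to check that the somewhat delicate projection built in Theorem \ref{Thm3} (an ultrafilter limit of martingale-type operators followed by the Lebesgue decomposition of measures) genuinely restricts to each block $A_m$ and that the norms $\|P_m\|$ remain uniformly bounded. Everything else — the conditional-expectation embedding of $Y$ into $X$, the regrouping of $\ell_\infty$-sums, and the verification of the hypotheses of the decomposition method in the $\ell_\infty$-setting — is routine.
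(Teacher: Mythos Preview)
Your proof is correct and follows essentially the same route as the paper's: the same isomorphic identifications $ces_\infty\simeq(\bigoplus_n \ell_1^{2^n})_{\ell_\infty}$ and $Ces_\infty\simeq(\bigoplus_n L_1[0,1])_{\ell_\infty}$, the same conditional-expectation embedding of the former into the latter, the same splitting of the index set into infinitely many infinite blocks together with Theorem~\ref{Thm3} to get the reverse complemented embedding, and the Pe{\l}czy\'nski decomposition to finish. The only cosmetic differences are that the paper first projects each block $(\bigoplus_n \ell_1^{2^{a_n^k}})_{\ell_\infty}$ onto $(\bigoplus_n \ell_1^{2^n})_{\ell_\infty}$ via $n\le a_n^k$ before invoking Theorem~\ref{Thm3} verbatim (whereas you observe, equally validly, that the proof of Theorem~\ref{Thm3} runs unchanged along any infinite index set with uniformly bounded projection), and that the paper uses the ``square'' form of the decomposition method while you use the ``countable $\ell_\infty$-sum'' form.
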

%%%%%%%%%%%%%%%%%%%%%

\proof At first, we recall that, by Corollary \ref{Cor3new}, $ces_{\infty} \simeq (\bigoplus_{n=0}^{\infty} l_1^{2^n})_{l_\infty}$ and, 
by Theorem \ref{Thm5}, $Ces_{\infty} \simeq (\bigoplus_{n \in \mathbb N} L_1[0, 1])_{l_\infty}$. Therefore, 
$ces_{\infty} \simeq ces_{\infty} \oplus ces_{\infty}$ and $Ces_{\infty} \simeq  Ces_{\infty}\oplus Ces_{\infty}$, which 
shows that we can apply Pe{\l}czy\'nski decomposition argument (see \cite[Proposition 4]{Pe60} or \cite[Theorem 2.2.3]{AK06}).
In other words, the proof will be completed whenever we check that $ces_{\infty}$ is isomorphic to a complemented subspace of
$Ces_{\infty}$ and vice versa.

Clearly, for every $n = 0, 1, 2, \ldots$ the space $l_1^{2^n}$ can be complementably embedded into the space $L_1[0, 1]$. Therefore, 
the fact that $ces_{\infty}$ is isomorphic to a complemented subspace of $Ces_{\infty}$ follows at once from the above isomorphic 
representations of these spaces.

Just a little more efforts are required for the proof of the reverse statement. Let us represent the set $\mathbb{N}\cup \{0\}$ as 
a union of infinite increasing pairwise disjoint sequences $(a_n^k)_{n=0}^{\infty}$, $k=1,2,\dots$ Then, we can write 
\begin{equation} \label{eqThm11}
ces_{\infty}\simeq \Big(\bigoplus_{k=1}^{\infty}(\bigoplus_{n=0}^{\infty} l_1^{2^{a_n^k}})_{l_\infty}\Big)_{l_\infty}.
\end{equation}
Since $n\leq a_n^k$, where $k = 1, 2, \dots$ and $n = 0, 1, 2, \ldots$ are arbitrary, the space $l_1^{2^{n}}$ can be considered as 
a complemented subspace of the space $l_1^{2^{a_n^k}}$. Let $P_n^k$ be a respective projection and $P_k = \bigoplus_{n=0}^{\infty} P_n^k$.
Noting that $P_k \Big((\bigoplus_{n=0}^{\infty} l_1^{2^{a_n^k}})_{l_\infty} \Big) = (\bigoplus_{n=0}^{\infty} l_1^{2^n})_{l_{\infty}}$, 
we see that, by Theorem \ref{Thm3}, $L_1[0, 1]$ is complemented in 
$P_k \Big((\bigoplus_{n=0}^{\infty} l_1^{2^{a_n^k}})_{l_\infty} \Big)$ and hence in the space $(\bigoplus_{n=0}^{\infty} l_1^{2^{a_n^k}})_{l_\infty}$. 
At final, from (\ref{eqThm11}) it follows that $Ces_{\infty} \simeq (\bigoplus_{n \in \mathbb N} L_1[0, 1])_{l_\infty}$
is isomorphic to a complemented subspace of $ces_{\infty}$ and the proof is completed.
\endproof

%%%%%%%%%%%%%%%%%%%%%%%% Corollary 7 (12old)
\begin{corollary}\label{Cor7n}
The space $Ces_{\infty}(I)$, where $I = [0, \infty)$ or $[0, 1]$, is isomorphic to a dual space. 
\end{corollary}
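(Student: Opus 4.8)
The plan is to deduce this immediately from Theorem~\ref{mainthm} together with the fact that $ces_{\infty}$ is, by construction, a dual space. First I would dispose of the two values of $I$ at once: by Theorem~\ref{mainthm} we have $Ces_{\infty}[0,\infty)\simeq ces_{\infty}$, while $Ces_{\infty}[0,1]\simeq Ces_{\infty}[0,\infty)$ by \cite[Theorem~9]{AM09} (alternatively, the proof of Theorem~\ref{Thm6} already exhibits both $Ces_{\infty}[0,1]$ and $Ces_{\infty}[0,\infty)$ as $(\bigoplus_{n\in\mathbb N}L_1[0,1])_{l_\infty}$). Hence $Ces_{\infty}(I)\simeq ces_{\infty}$ in either case.

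Next I would recall that the Tandori sequence space $\widetilde{l_1}$ has an order continuous norm (cf.\ \cite{LM15b}), so that $(\widetilde{l_1})^{*}=(\widetilde{l_1})^{\prime}$, and that by Alexiewicz's identity \eqref{Alex1} with the weight $w\equiv 1$ one has $(\widetilde{l_1})^{\prime}=ces_{\infty}$. Thus $ces_{\infty}=(\widetilde{l_1})^{*}$ is a dual Banach space, and since ``being isomorphic to a dual space'' is plainly an isomorphic invariant, $Ces_{\infty}(I)$ is isomorphic to a dual space, as claimed.

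I do not expect any genuine obstacle: the argument is just the concatenation of Theorem~\ref{mainthm} with the predual description of $ces_{\infty}$. The only point requiring a little care is the $I=[0,1]$ case, where $Ces_{\infty}[0,1]$ is \emph{not} literally the restriction of $Ces_{\infty}[0,\infty)$, so one must invoke the isomorphism between the two function spaces rather than treat them as identical; once this is done the conclusion is immediate. It is worth adding a remark that, in contrast to the sequence case where $\widetilde{l_1}$ is a concrete (lattice) predual of $ces_{\infty}$, here the isomorphism yields \emph{some} predual of $Ces_{\infty}$ but no natural lattice predual, since $[(Ces_{\infty})^{\prime}]^{0}=(\widetilde{L_1})^{0}=\{0\}$.
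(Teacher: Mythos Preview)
Your proposal is correct and follows essentially the same route as the paper's own proof: the paper simply writes $(\widetilde{l_1})^* = (\widetilde{l_1})^{\prime} = ces_{\infty} \simeq Ces_{\infty}$, invoking \eqref{Alex1} and Theorem~\ref{mainthm}. You add a bit more detail---explicitly treating the $I=[0,1]$ case via \cite[Theorem~9]{AM09} and spelling out the order continuity of $\widetilde{l_1}$---but the argument is the same.
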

%%%%%%%%%%%%%%%%%%%%%
\proof By (\ref{Alex1}) and Theorem \ref{mainthm} we have
$(\widetilde{l_1})^* = (\widetilde{l_1})^{\prime} = ces_{\infty} \simeq Ces_{\infty}$.
\endproof

In contrast to the latter result, order continuous Ces\`aro spaces fail to be isomorphic to the dual ones.

%%%%%%%%%%%%%%%%%%%%%%% Proposition 4 (Corollary 11old)
\begin{proposition} \label{Cor11n}
If $X$ is a symmetric function space on $I = [0, 1]$ or $I = [0, \infty)$ such that $X$ is order continuous and $C$ 
is bounded on $X$, then $CX$ is not isomorphic to a dual space. 
\end{proposition}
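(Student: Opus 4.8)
The plan is to reach a contradiction from two structural facts about $CX$ that are already available: it is order continuous (hence separable), while it contains a closed subspace isometric to $L_1[0,1]$ up to equivalence of norms. First I would recall that, since $X$ is order continuous, the Ces\`aro space $CX$ is order continuous as well (see \cite[Lemma 1]{LM15b}); as the underlying measure space ($[0,1]$ or $[0,\infty)$ with Lebesgue measure) is separable, any order continuous Banach ideal space on it is separable, so $CX$ is a \emph{separable} Banach space. Second, the standing assumption $\supp X = I$ together with the boundedness of $C$ on $X$ puts us in the situation of Proposition \ref{Pro1n}, so $CX$ contains a complemented, in particular closed, subspace isomorphic to $L_1[0,1]$.

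Now I would argue by contradiction: suppose $CX$ is isomorphic to a dual space $Y^{*}$. Then $Y^{*}$ is separable, and it is a classical fact that every separable dual Banach space has the Radon--Nikodym property. Since the Radon--Nikodym property is an isomorphic invariant and is inherited by closed subspaces, it would follow that $L_1[0,1]$ has the Radon--Nikodym property, contradicting the well-known failure of this property in $L_1[0,1]$. This finishes the proof. (One could state the conclusion a little more strongly in the same breath: $CX$ is not even isomorphic to a complemented subspace of a separable dual space, for the same reason.)

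The part that deserves care --- and the natural main obstacle --- is the separability of $CX$, since this is exactly what forces a hypothetical predual $Y$, and hence $Y^{*}$, to be separable so that the Radon--Nikodym machinery can be switched on; without separability a dual space may well fail the Radon--Nikodym property (e.g. $L_\infty$). It rests on transferring order continuity from $X$ to $CX$ and on separability of the measure space, both routine but essential. Everything else I would simply quote: the Radon--Nikodym property of separable duals, its hereditary behaviour under closed subspaces and isomorphisms, and its failure for $L_1[0,1]$.
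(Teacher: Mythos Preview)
Your argument is correct, and it differs from the paper's proof. Both proofs start from the same two ingredients: separability of $CX$ (via order continuity inherited from $X$) and the presence of an isomorphic copy of $L_1[0,1]$ inside $CX$ (Proposition~\ref{Pro1n}). From there, the paper invokes the Hagler--Stegall theorem \cite[Theorem 1]{HS73}: a dual space containing a \emph{complemented} copy of $L_1[0,1]$ must contain a subspace isomorphic to $C[0,1]^*$, which is nonseparable --- contradicting the separability of $CX$. Your route instead uses the classical fact that separable duals have the Radon--Nikodym property, which is hereditary for closed subspaces and fails in $L_1[0,1]$. Your argument is arguably more elementary: it does not require the complementation of the $L_1$-copy, only its existence as a closed subspace, and it avoids the comparatively heavier Hagler--Stegall machinery. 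As you note, it also yields the slightly stronger conclusion that $CX$ does not even embed isomorphically into any separable dual. On the other hand, the paper's approach via Hagler--Stegall extracts the extra structural information that any hypothetical dual realization of $CX$ would contain a copy of $C[0,1]^*$, which ties in with the discussion surrounding Theorem~\ref{Thm12}.
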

\proof
Suppose that $CX$ is isomorphic to a dual space. By Proposition \ref{Pro1n}, $CX$ contains a complemented 
subspace isomorphic to $L_1[0,1]$. Therefore, applying Hagler-Stegall theorem (cf. \cite[Theorem 1]{HS73}), we 
see that $CX$ contains also a subspace isomorphic to $C[0, 1]^*$. However, it is impossible, since $CX$ is 
separable (by Lemma 1 in \cite{LM15b}).
\endproof

Let us comment the latter results. Suppose that $X$ is an ideal Banach function space with the Fatou property such that 
the separable part of its K\"othe dual $(X^{\prime})^0$ has the same support as $X$ itself. Then, an easy argument 
shows that
$$
[(X^{\prime})^0]^* = [(X^{\prime})^0]^{\prime} = X^{\prime \prime} = X, 
$$
i.e., $X$ is a dual space. So, the space $(X^{\prime})^0$ is a natural candidate for being predual of a dual ideal Banach 
space $X$. Moreover, as we have seen, separable $CX$ spaces are not isomorphic to dual ones similarly as $L_1$ 
and both of them have K\"othe dual without nontrivial absolutely continuous elements. Hence, the following conjecture 
may arise: an ideal Banach space whose K\"othe dual has trivial subspace of order continuous elements is not isomorphic 
to a dual space. This statement, however, is false; by Corollary \ref{Cor7n}, the Ces\`aro space $Ces_{\infty}$, satisfying 
$[(Ces_{\infty})^{\prime}]^0 = (\widetilde{L_1})^0=\{0\}$, is a dual space. In connection with that we can ask, for example, 
if the symmetric space $X = L_1 + L_{\infty}$ on $[0, \infty)$ is isomorphic to a dual space noting that 
$(X^{\prime})^0 = (L_1 \cap L_{\infty})^0 = \{0\}$?

It is interesting to observe that the above phenomenon has its counterpart in the general theory of Banach lattices. Let $E$ 
be a separable Banach lattice satisfying the Radon-Nikodym property (RNP). Then the set $F$ of all $x^* \in E^*$, such that 
the interval $[0, |x^*|]$ is weakly compact is a Banach lattice. Lotz showed (in unpublished preprint \cite{Lo75}) that if $F$ is 
big enough, i.e., the topology $\sigma(E, F)$ is Hausdorff, then $E = F^*$. Hence, $F$ is a natural candidate as the predual 
of $E$. In \cite{Ta81} Talagrand, however, motivated by above results, has constructed a separable Banach lattice being 
a dual space (and hence with RNP) such that for each $x^* \in E^*, x^* \geq 0$, the interval $[0, x^*]$ is not weakly compact.

To see that the space $Ces_{\infty}$ may be regarded as a natural ``function" counterpart of Talagrand's example (which seems to 
be rather artificial) we present the following simple assertion.

%%%%%%%%%%%%%%%%%%%%%%%% Proposition 13
\begin{proposition}\label{Cor12}
Let $X$ be an ideal Banach space on $[0, 1], x_0 \in X, x_0 \geq 0$. Then the interval $[0, x_0]$ is weakly compact in $X$ if and 
only if $x_0 \in X^0$.
\end{proposition}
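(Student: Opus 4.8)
The plan is to establish the two implications separately. For the implication ``$x_0\in X^0\Rightarrow[0,x_0]$ is weakly compact'' I would reduce to a classical fact about Banach lattices with order continuous norm; for the converse I give a short self-contained argument.

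\emph{If $x_0\in X^0$.} First observe that the whole interval lies in $X^0$: if $0\le y\le x_0$, then for every decreasing sequence $A_n$ of measurable sets with $m(\bigcap_n A_n)=0$ one has $\|y\chi_{A_n}\|_X\le\|x_0\chi_{A_n}\|_X\to0$, so $y\in X^0$. Now $X^0$ is a closed ideal of $X$, hence a Banach lattice, and it has order continuous norm by its very definition. Then I would invoke the classical theorem that every order interval in a Banach lattice with order continuous norm is weakly compact (see e.g.\ \cite[Chapter~1]{LT79}; the mechanism is that such a lattice is an ideal in its bidual $(X^0)^{\ast\ast}=((X^0)^{\prime})^{\ast}$, where the interval $[0,x_0]$ coincides with the one computed in $X^0$ and is weak$^*$-compact by Banach--Alaoglu, the positive cone being weak$^*$-closed). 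Hence $[0,x_0]$ is $\sigma(X^0,(X^0)^{\ast})$-compact, and since the inclusion $X^0\hookrightarrow X$ is bounded, hence weak-to-weak continuous, $[0,x_0]$ is weakly compact in $X$ as well.

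\emph{Only if.} I argue by contraposition. If $x_0\notin X^0$, then there are $\delta>0$ and measurable sets $A_1\supseteq A_2\supseteq\cdots$ with $m(\bigcap_n A_n)=0$ and $\|x_0\chi_{A_n}\|_X\ge\delta$ for all $n$ (the numbers $\|x_0\chi_{A_n}\|_X$ are non-increasing, so if they do not tend to $0$ they stay bounded below). Put $u_n:=x_0\chi_{A_n}\in[0,x_0]$, so that $u_1\ge u_2\ge\cdots\ge0$ pointwise and $u_n\to0$ a.e. I claim that $(u_n)$ has no weakly convergent subsequence, which, by the Eberlein--\v{S}mulian theorem and the fact that $[0,x_0]$ is norm-closed, precludes weak compactness of $[0,x_0]$. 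Indeed, if $u_{n_j}\to u$ weakly, then for each fixed $m$ the elements $u_m-u_{n_j}$ (for $n_j\ge m$) are positive and converge weakly to $u_m-u$; since the positive cone $\{g:g^-=0\}$ is norm-closed (the map $g\mapsto g^-$ being non-expansive for the lattice operations) and convex, hence weakly closed, we get $u\le u_m$ for every $m$, so $0\le u\le\inf_m u_m=0$ a.e., i.e.\ $u=0$. But then, by Mazur's theorem, some finite convex combination $\sum_{i=1}^{r}\lambda_i u_{m_i}$, with $m_1<\dots<m_r$, $\lambda_i\ge0$, $\sum_{i=1}^{r}\lambda_i=1$, has norm $<\delta$; on the other hand $\sum_{i=1}^{r}\lambda_i u_{m_i}\ge\big(\sum_{i=1}^{r}\lambda_i\big)u_{m_r}=u_{m_r}$, so by the ideal property $\|\sum_{i=1}^{r}\lambda_i u_{m_i}\|_X\ge\|u_{m_r}\|_X\ge\delta$, a contradiction.

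The only points requiring care are the passage of weak compactness from the closed ideal $X^0$ to $X$ (routine, bounded maps being weak-to-weak continuous) and pinning down a convenient reference for the order-continuity $\Rightarrow$ weak compactness of order intervals; the ``only if'' direction is entirely elementary and I expect no real obstacle there.
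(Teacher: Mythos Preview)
Your proof is correct and follows the same two-implication skeleton as the paper's argument. The differences are in the black boxes: for $x_0\in X^0\Rightarrow[0,x_0]$ weakly compact, the paper cites \cite[Lemma~10.4.2]{KA77} for weak compactness of order intervals in $X^0$ and then uses the Yosida--Hewitt type decomposition $X^*=X'\oplus X_s'$ (singular functionals vanishing on $X^0$) to pass to $X$, whereas you cite the general Banach-lattice fact from \cite{LT79} and pass to $X$ via weak-to-weak continuity of the bounded inclusion, which is slightly cleaner. For the converse, the paper again cites a ready-made lemma \cite[Lemma~10.4.1]{KA77} to get $\|x_0\chi_{A_{n_k}}\|\to0$ from weak convergence, while your Mazur-based argument is self-contained and avoids the external reference; the monotonicity $u_{m_1}\ge\cdots\ge u_{m_r}$ is exactly what makes the convex-combination contradiction work.
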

%%%%%%%%%%%%%%%%%%%%%
\proof Firstly, let $[0, x_0]$ be weakly compact in $X$. On the contrary, assume that $x_0 \notin X^0$. Then there are a sequence of 
sets $\{A_n\}_{n=1}^{\infty}, A_1 \supset A_2 \supset A_3 \supset \ldots, \bigcap_{n=1}^{\infty} A_n = \emptyset$, and $\varepsilon > 0$ 
such that 
\begin{equation} \label{51}
\| x_0 \chi_{A_n} \|_X \geq \varepsilon.
\end{equation}
Since $x_0 \chi_{A_n} \in [0, x_0]$, by hypothesis, we can find a subsequence $\{x_0 \chi_{A_{n_k}} \}_{k=1}^{\infty}$ such that 
$x_0 \chi_{A_{n_k}}  \rightarrow y$ weakly in $X$. Then, by \cite[Lemma 10.4.1]{KA77}, we get that $\| x_0 \chi_{A_{n_k}} \|_X \rightarrow 0$ 
as $k \rightarrow \infty$. This contradicts (\ref{51}).

Conversely, let $x_0 \in X^0$. Clearly, we have $[0, x_0] \subset X^0$. Therefore, by \cite[Lemma 10.4.2]{KA77}, the interval $[0, x_0]$ 
is weakly compact in $X^0$, i.e., with respect to the topology generated in $X^0$ by the space $(X^0)^* = X^{\prime}$. Since 
$ X^* = X^{\prime} \bigoplus X_s^{\prime}$, where $X_s^{\prime}$ consists of all singular functionals $f$ such that $f{\big |}_{X^0} = 0$ (see 
\cite[Theorem 10.3.6]{KA77}), we get that $[0, x_0]$ is weakly compact in $X$ as well.
\endproof

\begin{remark}\label{1new}
In particular, from Proposition \ref{Cor12}, it follows that the above Lotz's result cannot be applied to $Ces_{\infty}$. 
In fact,  $(Ces_{\infty})^*= \widetilde{L^1}\oplus S$, where $S$ is the space of singular functionals, and, since singular 
functionals are not comparable with regular ones, each interval $[0,|x^*|]\subset Ces_{\infty}^*$ is either non-weakly compact 
or is of the form $[0,|s|]$ with $s\in S$. Therefore, the set $F$ of all $x^* \in (Ces_{\infty})^*$ with the weakly compact interval 
$[0, |x^*|]$ is contained in $S$ and the topology $\sigma(Ces_{\infty}, F)$ fails to be Hausdorff, because singular 
functionals vanish on absolutely continuous  elements.
\end{remark}

\begin{remark}\label{2new}
From results obtained in this section it follows that $(\bigoplus_{k=1}^{\infty} L_1[0,1])_{l_\infty}$ is isomorphic to a dual space. 
On the other hand, the unit sphere of this space does not contain extreme points and, hence, it is not isometric to a dual space. 
Thanks to the well-known Davis-Johnson result \cite{DJ737} we know that each nonreflexive Banach space can be renormed 
so that to be nonisometric to a dual one. At the same time, the proof presented in \cite{DJ737} does not concern any information on 
extreme points of the unit sphere of the space derived by suitable renorming. Therefore, having in mind the above example of 
$(\bigoplus_{k=1}^{\infty} L_1[0,1])_{l_\infty}$, we can ask if each nonreflexive Banach space may be renormed so that its unit 
sphere will not contain any extreme points? 
\end{remark}

Since $Ces_{\infty}(I) \simeq X^*$, where $X$ is a Banach space, and it contains a complemented subspace isomorphic to $L_1[0, 1]$, 
then according to the above-mentioned Hagler-Stegall result $Ces_{\infty}(I)$ contains a complemented subspace isomorphic to 
$C[0, 1]^*$, i.e., to the space ${\mathcal M}[0, 1]$ of all regular Borel measures on $[0, 1]$ of finite variation. 
We would like to conclude the paper by presenting the following stronger result, which was noticed by Micha\l \  Wojciechowski and which 
is included here with his kind permission. 

%%%%%%%%%%%%%%%%%%%%%%%% Theorem 12
\begin{theorem}\label{measures} \label{Thm12}
The spaces $Ces_{\infty}(I)$, where $I = [0, 1]$ or $[0, \infty)$, is isomorphic to the space $(\bigoplus_{k=1}^{\infty} {\mathcal M}[0,1])_{l_\infty}$.  
\end{theorem}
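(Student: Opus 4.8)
The plan is to derive the assertion from two facts already at our disposal: the isomorphic representation $Ces_{\infty}(I) \simeq (\bigoplus_{n\in\mathbb N} L_1[0,1])_{l_\infty}$ (Theorems \ref{Thm5} and \ref{Thm6}), and the fact, recorded just before the statement, that $Ces_{\infty}(I)$ contains a complemented subspace isomorphic to ${\mathcal M}[0,1] = C[0,1]^*$ (obtained there from Corollary \ref{Cor7n}, Proposition \ref{Pro1n} and the Hagler--Stegall theorem \cite[Theorem 1]{HS73}). Writing $A := Ces_{\infty}(I)$ and $B := (\bigoplus_{k=1}^{\infty} {\mathcal M}[0,1])_{l_\infty}$, I would prove $A \simeq B$ by a last application of the Pe{\l}czy\'nski decomposition method, exactly as in the proof of Theorem \ref{mainthm}. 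It therefore suffices to check: (a) $B$ is isomorphic to a complemented subspace of $A$; (b) $A$ is isomorphic to a complemented subspace of $B$; and (c) each of $A$ and $B$ is isomorphic to the $l_\infty$-sum of countably many copies of itself.

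Property (c) is immediate, since both $A \simeq (\bigoplus_{n\in\mathbb N} L_1[0,1])_{l_\infty}$ and $B = (\bigoplus_{k\in\mathbb N} {\mathcal M}[0,1])_{l_\infty}$ are $l_\infty$-sums over $\mathbb N$, and reindexing by a bijection $\mathbb N\times\mathbb N\to\mathbb N$ gives $A \simeq (\bigoplus_{j\in\mathbb N} A)_{l_\infty}$ and $B \simeq (\bigoplus_{j\in\mathbb N} B)_{l_\infty}$. For (b) I would use that, by the Radon--Nikodym theorem, passing from a finite signed Borel measure on $[0,1]$ to its absolutely continuous part is a norm-one (band) projection of ${\mathcal M}[0,1]$ onto a subspace isometric to $L_1[0,1]$; applying this projection in each coordinate shows that $(\bigoplus_{k\in\mathbb N} L_1[0,1])_{l_\infty} \simeq A$ is complemented in $B$. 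For (a), let $P\colon A \to A$ be a projection with range isomorphic to ${\mathcal M}[0,1]$ (supplied by the quoted fact; alternatively such a $P$ can be read off the proof of Theorem \ref{Thm3}, where $x\mapsto R_x$ maps $(\bigoplus_n l_1^{2^n})_{l_\infty}$ onto ${\mathcal M}[0,1]$ with bounded right inverse $\mu\mapsto(\mu(B_n^k))_{n,k}$, so that the composition is a projection). Applying $P$ coordinatewise to $(\bigoplus_{k\in\mathbb N} A)_{l_\infty} \simeq A$ then exhibits $B = (\bigoplus_{k\in\mathbb N}{\mathcal M}[0,1])_{l_\infty}$ as a complemented subspace of $A$.

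With (a)--(c) verified, the Pe{\l}czy\'nski decomposition method (\cite[Proposition 4]{Pe60}; see also \cite[Theorem 2.2.3]{AK06}) yields $A \simeq B$, which settles both $I = [0,1]$ and $I = [0,\infty)$ at once, since Theorem \ref{Thm6} covers both cases. The only genuinely non-formal ingredient is the complementation of ${\mathcal M}[0,1]$ inside $Ces_{\infty}(I)$, and this is already in place, so I expect no real obstacle; the single point deserving care is that the decomposition method is being run with $l_\infty$-sums rather than with $c_0$- or $l_p$-sums for $p<\infty$, but this is precisely the setting already used successfully in Theorem \ref{mainthm} (there through $Ces_{\infty} \simeq Ces_{\infty}\oplus Ces_{\infty}$ together with the self-$l_\infty$-sum property), and the same reasoning applies here verbatim.
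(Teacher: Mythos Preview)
Your argument is correct. The overall architecture---the isomorphism $Ces_{\infty}(I)\simeq(\bigoplus L_1[0,1])_{l_\infty}$, the Pe{\l}czy\'nski decomposition method, the coordinatewise Lebesgue projection for (b), and the self-$l_\infty$-sum property (c)---matches the paper's proof. The genuine difference is in step (a): the paper does not invoke Hagler--Stegall here but instead constructs the complementation of $(\bigoplus {\mathcal M})_{l_\infty}$ inside $(\bigoplus L_1)_{l_\infty}$ explicitly. After passing from $[0,1]$ to $\mathbb{T}$ via Miljutin's theorem, the paper embeds ${\mathcal M}(\mathbb{T})$ into $(\bigoplus_{n\in N_i} L_1(\mathbb{T}))_{l_\infty}$ by $\mu\mapsto(K_n*\mu)_{n\in N_i}$ using the Fej\'er kernel, splits $\mathbb{N}$ into infinitely many such index sets, and builds the projection back via a free-ultrafilter limit $\langle g_i^*,g\rangle=\lim_{\mathcal U}\langle f_{n_j^i},g\rangle$. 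Your route is shorter and uses only results already on the table (the paper itself records, just before the theorem, that Hagler--Stegall yields a complemented copy of ${\mathcal M}[0,1]$ in $Ces_{\infty}(I)$); the paper's route is more self-contained and produces an explicit projection. Your parenthetical alternative via the map $x\mapsto R_x$ from Theorem~\ref{Thm3} is essentially the discrete analogue of the paper's Fej\'er-kernel construction, though one should be careful that $\mu\mapsto(\mu(B_n^k))_{n,k}$ is an isomorphic embedding on all of ${\mathcal M}[0,1]$ (atoms at dyadic endpoints require a moment's thought); since your main argument does not rely on this aside, it is harmless.
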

%%%%%%%%%%%%%%%%%%%%%
\proof
At first,  by Miljutin's theorem (cf. \cite[p. 94]{AK06}), we know that $C[0,1] \simeq C(\mathbb{T})$, where  
$\mathbb{T}$ is the unit circle. Since also ${\mathcal M}[0,1]\simeq {\mathcal M}(\mathbb{T})$ and
$L^1[0,1]\simeq L^1(\mathbb{T})$, we can regard all spaces on $\mathbb{T}$ instead of $[0,1]$. 

By Theorem \ref{Thm5}, it is sufficient to prove that the spaces $(\bigoplus_{k=1}^{\infty} L_1(\mathbb{T}))_{l_\infty}$ and 
$(\bigoplus_{k=1}^{\infty} {\mathcal M}(\mathbb{T}))_{l_\infty}$ are isomorphic. Since both spaces are isomorphic to their squares, we 
may again apply Pe{\l}czy\'nski decomposition argument. Clearly, $(\bigoplus_{k=1}^{\infty} L_1(\mathbb{T}))_{l_\infty}$ is isomorphic 
to a complemented subspace of $(\bigoplus_{k=1}^{\infty} {\mathcal M}(\mathbb{T}))_{l_\infty}$. So, we need to check only that, conversely, 
$(\bigoplus_{k=1}^{\infty} {\mathcal M}(\mathbb{T}))_{l_\infty}$  is isomorphic to a complemented subspace of 
$(\bigoplus_{k=1}^{\infty} L_1(\mathbb{T}))_{l_\infty}$.

Let $\{K_n\}_{n=1}^{\infty}$ be the Fejer kernel and let $\{N_i\}_{i=1}^{\infty}$ be a sequence of pairwise disjoint infinite subsets 
of positive integers such that $\sum_{i=1}^{\infty} N_i = \mathbb N$. For every $i = 1, 2, \ldots$ define the operator
$K^i: {\mathcal M}(\mathbb{T}) \rightarrow (\bigoplus_{k=1}^{\infty} L_1(\mathbb{T}))_{l_\infty}$ as follows:
$$
K^i({\mu}): = (K_n*\mu)_{n \in N_i}\;\;\mbox{for every}\;\;\mu \in {\mathcal M}(\mathbb{T}).$$
Then, $\| K^i\| = 1$ and if $N_i = \{ n_j^i\}_{j = 1}^{\infty}$, 
$n_1^i < n_2^i < \ldots$, then $K_{n_j^i}*\mu \rightarrow \mu$ as $j \rightarrow \infty$ weakly* in ${\mathcal M}(\mathbb{T})$ for each 
$i = 1, 2, \ldots$. Hence, $K: = \bigoplus_{i=1}^{\infty} K^i$ is an injective operator from 
$(\bigoplus_{i=1}^{\infty} {\mathcal M}(\mathbb{T}))_{l_\infty}$ into the space
$$
(\bigoplus_{k=1}^{\infty} L_1(\mathbb{T}))_{l_\infty} \, \simeq \, \Big(\bigoplus_{i=1}^{\infty} (\bigoplus_{n \in N_i} L_1(\mathbb{T}))_{l_\infty} \Big)_{l_{\infty}}.
$$
Denoting by $Y$ the image of $K$, we prove that it is complemented in the latter space.

Let $\mathcal U$ be a free ultrafilter. For a given $\{f_k\} \in (\bigoplus_{k=1}^{\infty} L_1(\mathbb{T}))_{l_\infty}$ and any $i = 1, 2, \ldots$ define
the functional $g_i^* \in C(\mathbb{T})^*$ by
$$
\langle g_i^*, g \rangle: = \lim_{\mathcal U} \langle f_{n_j^i}, g \rangle, ~ g \in C(\mathbb{T}).
$$
Since $\| \{f_k\}\| = \sup_{k \in \mathbb N} \| f_k\|_{L_1} < \infty$, then $g_i^*$ is a well-defined, linear and bounded functional.
Therefore, by the Riesz representation theorem, for every $i = 1, 2, \ldots$ there is a measure $\mu_i \in {\mathcal M}(\mathbb{T})$ such that
$\langle g_i^*, g \rangle = \langle \mu_i, g \rangle$ for each $g \in C(\mathbb{T})$. Setting $P(\{f_k\}): = \{\mu_i\}$, we see that $P$ is a linear 
bounded operator from $(\bigoplus_{k=1}^{\infty} L_1(\mathbb{T}))_{l_\infty}$ into $(\bigoplus_{k=1}^{\infty} {\mathcal M}(\mathbb{T}))_{l_\infty}$. 
It remains only to show that the composition $KP$ is a projection from $(\bigoplus_{k=1}^{\infty} L_1(\mathbb{T}))_{l_\infty}$ onto $Y$. 
In fact, suppose that $\{f_k\} \subset Y$. Then $f_{n_j^i} = K_{n_j^i} * \mu_i, i, j = 1, 2, \ldots$, where 
$\{\mu_i\} \in (\bigoplus_{i=1}^{\infty} {\mathcal M}(\mathbb{T}))_{l_\infty}$, and we have
$$
\lim_{\mathcal U} \langle f_{n_j^i}, g \rangle = \lim_{\mathcal U} \langle K_{n_j^i} * \mu_i, g \rangle = 
\lim_{\mathcal U} \langle \mu_i, K_{n_j^i} * g \rangle =\lim_{j\to\infty}\langle \mu_i, K_{n_j^i} * g \rangle=\langle \mu_i, g \rangle
$$
for every $g \in C(\mathbb{T})$. Thus, $KP\{f_k\} = \{f_k\}$ if $\{f_k\} \in Y$, and the proof is complete. 
\endproof

{\bf Acknowledgements} The second named author is very grateful to Professors Yves Raynaud and Micha\l\  Wojciechowski 
for valuable suggestions, advices and remarks concerning results from the last section.

%%%%%%%%%%%%%%%%%%%%%%%%%%%%%%%

\end{document}